\theoremstyle{plain}
\newtheorem{thm}{Theorem}[section]
\newtheorem{prop}[thm]{Proposition}
\newtheorem{cor}[thm]{Corollary}
\newtheorem{lemma}[thm]{Lemma}
\newtheorem{claim}[thm]{Claim}
\theoremstyle{remark}
\newtheorem*{rem}{Remark}
\theoremstyle{definition}
\newtheorem{defn}[thm]{Definition}
\newtheorem{example}[thm]{Example}
\newcommand{\C}{{\mathbb C}}
\newcommand{\R}{{\mathbb R}}
\newcommand{\T}{{\mathbb T}}
\newcommand{\Z}{{\mathbb Z}}
\newcommand{\E}{{\mathbb E}}
\newcommand{\Pro}{{\mathbb P}}
\newcommand{\Ee}{{\mathbf E}}
\newcommand{\Prr}{{\mathbf P}}
\DeclareMathOperator{\re}{Re}
\DeclareMathOperator{\im}{Im}
\newcommand{\F}{\mathcal{F}}
\newcommand{\K}{\mathcal{K}}
\newcommand{\cE}{\mathcal{E}}
\newcommand{\cN}{\mathcal{N}}
\newcommand{\Sone}{\mathcal{S}^1}
\newcommand{\cA}{{\mathcal{A}}}
\newcommand{\Abs}[1]{\left|#1\right|}
\DeclareMathOperator{\sgn}{sgn}
\DeclareMathOperator{\mes}{meas}
\newenvironment{rlist}
{

\begin{enumerate}}
{\end{enumerate}}
\title{On the number of nodal domains of toral eigenfunctions}
\thanks{The research leading to these results has received funding from the European Research Council under the European Union's Seventh Framework Programme (FP7/2007-2013), ERC grant agreement n$^{\text{o}}$ 335141}
\author{Jeremiah Buckley}
\author{Igor Wigman}
\address{Dept. of Mathematics, King's College London, Strand, London, WC2R 2LS, United Kingdom.}
\email{jeremiah.buckley@kcl.ac.uk}
\email{igor.wigman@kcl.ac.uk}
\begin{document}
\begin{abstract}
We study the number of nodal domains of toral Laplace eigenfunctions. Following Nazarov-Sodin's results for random fields and Bourgain's de-randomisation procedure we establish a precise asymptotic result for ``generic" eigenfunctions. Our main results in particular imply an optimal lower bound for the number of nodal domains of generic toral eigenfunctions.
\end{abstract}
\maketitle

\section{Introduction and main results}

\subsection{Toral eigenfunctions}
Let $\T^{2}=\R^{2}/\Z^{2}$ be the standard $2$-dimensional torus and let $\Delta$ be the Laplacian on $\T^{2}$. We are interested
in the eigenfunctions of $\Delta$, i.e., functions $f:\T^{2}\rightarrow \R$ satisfying the Schr\"{o}dinger equation
\begin{equation}
\label{eq:Schrodinger}
\Delta f + 4 \pi^2 Ef=0
\end{equation}
for some $E\ge 0$. It is well known that the spectrum of $\Delta$ is purely discrete; $E$ lies in the set
\begin{equation}
\label{eq:S def}
S= \{a^{2}+b^{2}:\: a,b\in \Z \}
\end{equation}
of all integer numbers  expressible as the sum of two integer squares. For such an integer $E\in S$ let
\begin{equation*}
\cE=\cE_{E}=\{\xi\in\Z^2:|\xi|^2=E\}
\end{equation*}
be the collection of lattice points lying on the circle in $\R^{2}$ centred at the origin and of radius $\sqrt{E}$, and let
\begin{equation}
\label{eq:N def}
N=N_{E}=|\cE_{E}|
\end{equation}
be its size.

Given $E\in S$ we may express the general (complex-valued) solution of \eqref{eq:Schrodinger} as
\begin{equation}
\label{eq:fa def}
f(x)=f_{a}(x)=f_{E;a}(x) = \sum\limits_{\xi\in\cE}a_{\xi}e(\langle \xi,x\rangle),
\end{equation}
where $a=(a_{\xi})_{\xi\in\cE}$ are some complex coefficients, $x=(x_{1},x_{2})\in\T^{2}$, $e(t)=e^{2\pi i t}$
and $\langle\cdot,\cdot\rangle$ is the usual inner product on $\R^2$.
Assuming that
\begin{equation}
\label{eq:a(-xi)=bar(a(xi))}
a_{-\xi}=\overline{a_\xi}
\end{equation}
will guarantee that $f$ is real valued; since our primary focus
is the zero set of $f$, we may also normalize $(a_{\xi})$ to satisfy
\begin{equation}
\label{eq:sum axi^2=1}
\sum_{\xi\in\cE} |a_\xi|^2=1.
\end{equation}
We will assume throughout this article that the coefficients satisfy both \eqref{eq:a(-xi)=bar(a(xi))} and \eqref{eq:sum axi^2=1}.

\subsection{Nodal domains}\label{sec:nod dom into}
The {\em nodal components} of a real-valued smooth function $\phi:\T^2\rightarrow\R$ are the connected components of its zero set $\mathcal Z(\phi)=\phi^{-1}(0)$ (called the ``nodal set"), and the {\em nodal domains} are the connected components of its complement $\T^2\setminus \phi^{-1}(0)$. Our primary focus is on the number $\cN_{f_{a}}$ of nodal domains of the function $f_{a}$ given by \eqref{eq:fa def} (recall that our assumption \eqref{eq:a(-xi)=bar(a(xi))} guarantees that $f_{a}$ is real-valued); typically --- for example, if no point of the nodal set is a critical point --- $\cN_{f_{a}}$ is (almost) equal to the number of nodal components of $f_{a}$ --- they may differ by at most $1$. The Courant Nodal Theorem (valid in a much more general scenario) implies that
\begin{equation*}
  \cN_{f_{a}} = O(E)
\end{equation*}
with the constant involved in the `O'-notation absolute and explicit. Bourgain ~\cite{B}*{Proposition 1, Theorem 2} showed that if all of the coefficients in \eqref{eq:fa def} are equal,
\begin{equation*}
  f(x)=\frac1{\sqrt N} \sum\limits_{\xi\in\cE}e(\langle \xi,x\rangle),
\end{equation*}
then, for generic values of $E$, the number of nodal domains satisfies the asymptotic law
\begin{equation}\label{eq:Nf a=1 c0 E}
  \cN_{f}\sim c_{0}\cdot E,\qquad E\to\infty
\end{equation}
where $c_{0}>0$ is some positive constant borrowed from theory of random fields (``the universal Nazarov-Sodin constant" \cite{NaSo2}, see Section~\ref{sec:NS constant} below).

Concerning a lower bound for $\cN_{f_{a}}$, there exist Laplace eigenfunctions of arbitrarily large energy $E$ with only $2$ nodal domains (at least on the square); hence there is no nontrivial lower bound for their number (this result goes back to A. Stern \cite{St} although we refer the reader to \cite{BeHe}*{Theorem 4.1} and the discussion that follows it; the analogous result on the sphere is given by \cite{Lew}*{Theorems 1 and 2}). Moreover, there exist \cite{KW}*{Proposition 3.2} sequences $\{E\}\subseteq S$ of energy levels with only $o(E)$ nodal domains as $E\to\infty$ for ``most'' coefficients $(a_{\xi})_{\xi\in\cE}$. However, it is widely believed that such a situation is impossible in some generic scenario, and it is desirable to show a lower bound of the ``correct" order of magnitude
\begin{equation}\label{eq:Nf>>E}
  \cN_{f_{a}} \gtrsim E
\end{equation}
holding uniformly for generic sequences of energies $\{E\}$ and
coefficients $(a_{\xi})$ satisfying some mild extra
assumptions\footnote{We thank Jean Bourgain for raising this question.};
\eqref{eq:Nf>>E} will follow as a straightforward corollary of our main results (cf. Corollary \ref{cor:lower bound}).

\subsection{The Nazarov-Sodin constant}

\label{sec:NS constant}

Given a symmetric probability measure $\mu$ on the unit circle $\Sone \subseteq \R^{2} $ we may consider the (random) Gaussian monochromatic wave $h_{\mu}:\R^{2}\rightarrow \R$ with unit wavenumber and directions distributed according to $\mu$. (Here symmetric means that for any arc $I$ we have $\mu(I)=\mu(-I)$, which guarantees that the field $h_\mu$ is real-valued.) The random field $h_{\mu}$ is uniquely defined as the centred stationary (i.e. the law of $h_{\mu}$ is invariant under translations) Gaussian random field, whose covariance function
\begin{equation*}
\E[h_{\mu}(x)h_{\mu}(x')] = \E[h_{\mu}(x-x')h_{\mu}(0)] =  \int\limits_{\Sone}e(\langle x-x',\theta \rangle)d\mu(\theta)
\end{equation*}
equals the Fourier transform of $\mu$ viewed as a measure on $\R^2$ (supported on $\Sone$). (Equivalently, $\mu$ is the {\em spectral measure} of $h_{\mu}$.)

Now for $R>0$ let $\cN_{h_{\mu}}(R)$ be the number of nodal domains of $h_{\mu}$ lying entirely inside the disc $B(R)$ centred at the origin of radius $R$. Nazarov and Sodin \cite{NaSo2}*{Theorem 1} proved that there exists a non-negative number $$c_{\mu}=c_{NS}(\mu)\ge 0$$ such that as $R\rightarrow\infty$ the expected number of nodal domains lying in $B(R)$ satisfies the asymptotic
\begin{equation}
\label{eq:Ngmu(R)<->c piR^2}
\E[\cN_{h_{\mu}}(R)]= c_{\mu}\cdot \pi R^{2}(1+o(1)),
\end{equation}
and gave some very mild conditions on $\mu$ that guarantee that $c_{\mu}$ is strictly positive; they also proved the almost sure convergence and convergence in mean of
\begin{equation*}
  \frac{\cN_{h_{\mu}}(R)}{\pi R^{2}}
\end{equation*}
to $c_{\mu}$, under some extra regularity assumptions on $\mu$. The constant $c_0$ mentioned in Section~\ref{sec:nod dom into}, the ``universal Nazarov-Sodin constant'', is given by $c_0=c_{NS}(\mu_{\mathrm{unif}})$ where
\begin{equation*}
  d\mu_{\mathrm{unif}}(\theta)=\frac{d\theta}{2\pi}
\end{equation*}
is the uniform measure on $\Sone$.

We may also consider $c_{NS}$ as a map
\begin{equation*}
  c_{NS}:\mathcal P(\Sone)\rightarrow \R_{+}
\end{equation*}
from the set of symmetric probability measures on the unit circle to the non-negative real numbers. It is known \cite{KW}*{Theorem 3.1, Corollary 3.3} that $c_{NS}$ is a continuous map with respect to the topology of weak convergence on $\mathcal P(\Sone)$, and moreover that $c_{NS}$ attains precisely an interval
$$c_{NS}(\mathcal{P}(\Sone)) = [0,c_{1}]$$
with some\footnotemark maximal value $0<c_{1}<\infty$.

\footnotetext{It was conjectured \cite{KW}*{Conjecture 3.4} that $c_{1}=c_{0}$ is the universal constant from \eqref{eq:Nf a=1 c0 E},
also known to be valid for Berry's Random Wave Model (equivalently random spherical harmonics \cite{NaSo}).}

Returning to $f_{a}$ as given by \eqref{eq:fa def}, we may view each $e(\langle\xi,x\rangle)$ as representing a plane wave (projected onto the torus) and think of the coefficient $a_{\xi}$
as amplifying the component propagating in the direction $\xi$. Hence the (deterministic) function $f_{a}$ could be viewed as a monochromatic superposition of plane waves of wavenumber $\sqrt{E}$
(projected onto the torus) with directions distributed according to the probability measure
\begin{equation*}
\mu_{E,a}=\sum_{\xi\in\cE} |a_\xi|^2 \delta_{\xi/\sqrt{E}}
\end{equation*}
on the unit circle $\Sone \subseteq \R^{2} $.
It is then reasonable to compare the (deterministic) number $\cN_{f_{a}}$ of
nodal domains of $f_{a}$ on the torus to the number of nodal domains
of $h_{\mu}$ with $\mu=\mu_{E,a}$ lying in a square in $\R^{2}$ of side-length $\sqrt{E}$, in the high energy limit $E\rightarrow\infty$;
the latter of these two is asymptotic to $c_{NS}(\mu)\cdot E$ (for example, in expectation, cf. \eqref{eq:Ngmu(R)<->c piR^2}).
This is precisely the main concern of our principal results, under
some ``generic" restrictions on the coefficients $(a_{\xi})$ and sequences of energy levels $\{E_{j}\}$
(see Theorems~\ref{thm: main thm uniform} and \ref{thm: main thm mu E cgt} below).

\subsection{Statement of the main results}
We will make some assumptions on the values of $E$ and the coefficients $a_\xi$ that we allow. First we make an assumption on the number of additive relations in the set $\cE$.
\begin{defn}[Constraint on the energy levels]\textcolor{white}{h}
  \begin{enumerate}
    \item  We say that a set of distinct $$\xi_1,\dots,\xi_l\in\cE$$ is \emph{minimally vanishing} if
        \begin{equation}\label{eq: l vanishing freq}
          \xi_1+\cdots+\xi_l=0
        \end{equation}
        and no proper sub-sum of \eqref{eq: l vanishing freq} vanishes.
    \item We say that $\cE$ satisfies the condition $I(\gamma,B)$ for $0<\gamma<\frac12$ and $B\geq1$ if, for all $3\leq l\leq B$, the number of minimally vanishing subsets of $\cE$ of length $l$ is at most $N^{\gamma l}$ (recall $N$ as in \eqref{eq:N def}).
  \end{enumerate}
\end{defn}

We will assume that $\cE$ satisfies $I(\gamma,B)$ for sufficiently large $B$; this assumption is valid \citelist{\cite{B-B}*{Theorem 17} \cite{B}*{Lemma 4}} for a density $1$ sequence $\{E\}\subseteq S$ (recall \eqref{eq:S def}). Further we will restrict the coefficients $a_\xi$ that we consider.

\begin{defn}[Constraint on the coefficients]\textcolor{white}{h}
  \begin{enumerate}
    \item We say that a function $g:\R\rightarrow \R_{>0}$ is a \emph{slowly growing function} if for all $\delta>0$ we have $$ g(x) =o(x^{\delta}), \qquad x\to\infty.$$
    \item Recalling that the coefficients $a=(a_{\xi})_{\xi\in\cE}$ are normalized by \eqref{eq:sum axi^2=1}, we let
    \begin{equation}\label{eq: M def}
      M=M(E,a)=\max\{|a_\xi|^2:\xi\in\cE\},
    \end{equation}
    and we further recall that $N$ is given by \eqref{eq:N def}. Given a slowly growing function $g$ we say that the coefficients $a=(a_{\xi})_{\xi\in\cE}$ are \emph{of class} $\cA(g)$ if they satisfy
    \begin{equation}\label{eq: bound on coeff}
      M\leq \frac{g(N)}{N}.
    \end{equation}
  \end{enumerate}
\end{defn}

Examples of slowly growing functions are given by any bounded $g$, the function $g(x)=\log_+ x$, or indeed any power of $\log$. At first glance \eqref{eq: bound on coeff} may seem a restrictive condition, but in fact it is generic with respect to the natural measure on the sphere. Specifically, if we normalise so that the sphere has measure $1$, the measure of the set of points $a=(a_{\xi})_{\xi\in\cE}$ on the $N$-sphere (that is, satisfying \eqref{eq:sum axi^2=1}) that also satisfy \eqref{eq: bound on coeff} is asymptotically $1$ for large $N$, if we chose $g$ to be a large enough power of $\log$. This follows immediately from L\'{e}vy's concentration of measure on the sphere.

Our first principal result is the following theorem:
\begin{thm}\label{thm: main thm uniform}
  Fix a slowly growing function $g$ and let $\{E \}\subseteq S$ be a sequence of energy levels, $E\rightarrow\infty$, such that $N_{E}\to\infty$ as $E\to\infty$, and that there exists $B(E)\to\infty$ and $\gamma<\frac12$ such that $\cE$ satisfies $I(\gamma,B(E))$ for all $E$. Then, given $\epsilon>0$, there exists $E_0=E_0(\epsilon,\gamma,g)$ such that for all $E\ge E_{0}$
  \begin{equation}\label{eq:|Nf/E-cNS|<eps}
    \Abs{\frac{\cN_{f_{a}}}{E}-c_{NS}(\mu_{E,a})}<\epsilon
  \end{equation}
  uniformly for all coefficients $(a_\xi)$ of class $\cA(g)$.
\end{thm}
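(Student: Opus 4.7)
The plan is to follow the Nazarov-Sodin window strategy combined with Bourgain's de-randomisation, but carried out uniformly in the coefficients $a$ of class $\cA(g)$. Fix a large parameter $K>0$ (to be sent to infinity later, much slower than $\sqrt{E}$) and partition $\T^{2}$ into $\sim E/K^{2}$ essentially disjoint squares $Q_{i}$ of side length $K/\sqrt{E}$. For each square I would rescale: define $F_{x_{0}}(y):=f_{a}(x_{0}+y/\sqrt{E})$ for $x_{0}\in\T^{2}$ and $y\in B(K)$, which is a trigonometric polynomial whose frequencies lie on $\Sone$ and whose ``spectral profile'' is exactly $\mu_{E,a}$. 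Counting the nodal domains of $f_{a}$ contained entirely inside $Q_{i}$ reduces to counting nodal domains of $F_{x_{0}}$ inside the fixed window $B(K)$, so that
\[
 \cN_{f_{a}} \;=\; \sum_{i}\cN_{F_{x_{i}}}(K) \;+\; \mathrm{error},
\]
the error accounting for nodal domains crossing between squares. A standard Crofton/boundary estimate (as in \cite{NaSo2}) shows that this error is $O(E/K)$, hence negligible after dividing by $E$ once $K$ is large.

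The heart of the proof is the \emph{de-randomisation step}: I would show that the ``random translate'' $x_{0}\mapsto F_{x_{0}}$, with $x_{0}$ uniform on $\T^{2}$, is close in distribution, on the bounded window $B(K)$ and in a sufficiently strong $C^{r}$-sense, to the Gaussian field $h_{\mu_{E,a}}$ restricted to $B(K)$. Concretely, for any fixed finite collection of test points $y_{1},\dots,y_{m}\in B(K)$, the vector
\[
\bigl(F_{x_{0}}(y_{j}),\, \partial^{\alpha}F_{x_{0}}(y_{j})\bigr)_{j,\alpha}
\]
has the same covariance structure as the corresponding Gaussian vector for $h_{\mu_{E,a}}$, and I would control its higher moments by expanding and using the $I(\gamma,B(E))$ condition: a non-Gaussian contribution to a moment of order $l\leq B(E)$ corresponds precisely to a minimally vanishing relation among $l$ frequencies, and the count $N^{\gamma l}$ combined with the bound $M\leq g(N)/N$ from class $\cA(g)$ forces each such contribution to be $O(N^{(\gamma-1/2)l}g(N)^{l/2})=o(1)$. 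Pushing $B(E)\to\infty$ slowly, one obtains convergence of all moments, hence convergence in distribution, uniformly over $a\in\cA(g)$ and over windows $B(K)$ for $K$ growing slowly with $E$.

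Given this local Gaussianization, I would invoke the Nazarov-Sodin result \eqref{eq:Ngmu(R)<->c piR^2} applied to the measure $\mu=\mu_{E,a}$: the expected number of nodal domains of $h_{\mu_{E,a}}$ inside $B(K)$ is $c_{NS}(\mu_{E,a})\cdot\pi K^{2}(1+o_{K}(1))$. Combined with a standard variance/concentration argument for the empirical count $\frac{1}{\mathrm{vol}(\T^{2})}\int_{\T^{2}}\cN_{F_{x_{0}}}(K)\,dx_{0}$, this yields
\[
 \frac{\cN_{f_{a}}}{E} \;=\; \frac{1}{K^{2}}\,\Ee_{x_{0}}\bigl[\cN_{F_{x_{0}}}(K)\bigr] + o_{K\to\infty}(1) \;=\; c_{NS}(\mu_{E,a})\cdot\pi + o(1),
\]
after normalising the areas correctly. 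Summing over the tiling recovers \eqref{eq:|Nf/E-cNS|<eps} provided we first choose $K$ large enough so that the Nazarov-Sodin asymptotic is within $\epsilon/3$, and then choose $E_{0}$ large enough that both the de-randomisation error (controlled by $\gamma<1/2$, $B(E)\to\infty$, and slow growth of $g$) and the boundary error are within $\epsilon/3$ each; crucially, all thresholds depend on $\epsilon,\gamma,g$ but \emph{not} on $a$.

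The main obstacle is the uniformity in $a\in\cA(g)$ of the de-randomisation step. Bourgain's original argument treats the equidistributed case $|a_{\xi}|^{2}=1/N$, where many combinatorial counts simplify. Here one must track how the moment expansion depends on the $|a_{\xi}|^{2}$; the condition $M\leq g(N)/N$ is exactly what is needed to absorb the slowly growing factor $g(N)^{l/2}$ against $N^{(\gamma-1/2)l}$, so the whole argument is tight and forces the precise shape of class $\cA(g)$. A second, more technical obstacle is that the target measure $\mu_{E,a}$ itself depends on $a$, so continuity of $c_{NS}$ (stated earlier in the excerpt) is needed to legitimise the passage from convergence-in-distribution of the local field to convergence of the nodal count for each $a$ individually, rather than along a single limiting measure.
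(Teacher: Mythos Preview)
Your overall architecture---localise, de-randomise \`a la Bourgain, then apply Nazarov--Sodin---is exactly the paper's strategy, and the moment computation you sketch (off-diagonal terms controlled by $I(\gamma,B)$ together with $M\le g(N)/N$) is the right mechanism. But two points deserve attention.

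First, a structural difference. You attempt to prove the uniform statement directly, tracking dependence on $a\in\cA(g)$ and on the varying target measure $\mu_{E,a}$ throughout. The paper instead observes that Theorem~\ref{thm: main thm uniform} is \emph{equivalent} to the sequential version (Theorem~\ref{thm: main thm mu E cgt}) by a short compactness/continuity argument: if the uniform statement failed one could extract a subsequence with $\mu_{E_j,a_j}\Rightarrow\mu$, and then the sequential version plus continuity of $c_{NS}$ gives a contradiction. This reduction means one only ever applies Nazarov--Sodin for a \emph{fixed} limiting $\mu$, so no uniformity of the $o_K(1)$ in \eqref{eq:Ngmu(R)<->c piR^2} over $\mu$ is needed. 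Your direct route is viable but forces you to invoke a uniform version of Nazarov--Sodin (such as \cite{KW}*{Proposition~3.5}); you should say so explicitly.

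Second, and more seriously, there is a gap at the step ``close in distribution in a sufficiently strong $C^r$-sense $\Rightarrow$ close in nodal count''. Convergence of finite-dimensional moments of $(F_{x_0},\nabla F_{x_0})$ gives convergence in law of the field, but the nodal count is \emph{not} a continuous functional of the law: two fields can have nearly identical distributions and wildly different nodal structures on any given realisation. What is actually needed is a \emph{coupling}: a joint realisation of the quasi-random field and a genuine Gaussian field that are $C^1$-close pathwise, together with a stability lemma saying that $C^1$-small perturbations do not significantly change the nodal count. The paper achieves this in two stages that your sketch skips. It first discretises directions, grouping the frequencies into $2K$ arcs so that $F_x$ is $C^1$-close to a trigonometric polynomial $\phi_x(y)=\sum_{k\in\K}\mu_{E,a}(I_k)^{1/2}b_k(x)\,e(\langle R\zeta^{(k)},y\rangle)$ with only finitely many fixed frequencies $\zeta^{(k)}$; the whole de-randomisation then lives on the \emph{finite-dimensional} vector $(b_k)_{k\in\K^+}$. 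It then upgrades the moment-closeness of $(b_k)$ to $(c_k)$ into an actual measure-preserving map $\tau:\Omega'\to\T^2$ with $\Phi_\omega+\Psi_\omega=\phi_{\tau(\omega)}+\psi_{\tau(\omega)}$ and $\|\Psi_\omega\|_{C^1}$ small (Proposition~\ref{prop: pass to random}), and finally invokes the Nazarov--Sodin perturbation lemma (Lemma~\ref{lem: Sodin components}) to conclude that $\cN_{\Phi+\Psi}$ and $\cN_{\Phi}$ differ by $O(R)$ outside a small-probability event. Your ``variance/concentration argument'' placeholder does not supply this; without the discretisation-plus-coupling step the passage from distributional closeness to \eqref{eq:|Nf/E-cNS|<eps} is not justified.
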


Note that if $c_{NS}(\mu_{E,a})$ in \eqref{eq:|Nf/E-cNS|<eps} is bounded away from $0$, then \eqref{eq:|Nf/E-cNS|<eps} implies a uniform lower bound of the type $$\cN_{f_{a}}\gtrsim E,$$ raised in Section~\ref{sec:nod dom into}. Nazarov and Sodin \cite{NaSo2} showed that it is quite easy to check whether for a given symmetric measure $\mu\in \mathcal P(\Sone)$ the corresponding Nazarov-Sodin constant $c_{NS}(\mu)=0$, and hence whether for a given set $K\subseteq \mathcal P(\Sone)$ the image $c_{NS}(K)$ is bounded away from zero.

\begin{cor}[Generic uniform lower bound for nodal domains number]
\label{cor:lower bound}
Let $g$ be a slowly growing function, and $K$ be a closed (i.e. compact) subset $K\subseteq\mathcal P(\Sone)$ of the set of probability measures on $\Sone$ such that
\begin{equation}\label{eq: cNS bdd from 0}
  \min_{\mu\in K}c_{NS}(\mu)>0.
\end{equation}
Suppose that $\{E \}\subseteq S$ is a sequence of energy levels, $E\rightarrow\infty$, such that $N_{E}\to\infty$, and that there exists $B(E)\to\infty$ such that $\cE$ satisfies $I(\gamma,B(E))$ for all $E$. Then there exist $c=c(K)>0$ and $E_0=E_0(g,K)$ such that for all $E\geq E_0$ and for all coefficients $(a_\xi)$ of class $\cA(g)$ satisfying $$\mu_{E,a}\in K$$ we have the lower bound
\begin{equation*}
\cN_{f_{a}} >c \cdot E.
\end{equation*}
\end{cor}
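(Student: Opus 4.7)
The plan is to derive this corollary as a short soft consequence of Theorem \ref{thm: main thm uniform} together with the continuity of $c_{NS}$ on $\mathcal P(\Sone)$. Since $c_{NS}$ is continuous in the weak topology (recalled in the excerpt from \cite{KW}) and $K\subseteq \mathcal P(\Sone)$ is compact, the function $\mu\mapsto c_{NS}(\mu)$ attains its minimum on $K$; by the assumption \eqref{eq: cNS bdd from 0} this minimum is a strictly positive number, which I call
\begin{equation*}
c^{\ast} := \min_{\mu\in K} c_{NS}(\mu) > 0.
\end{equation*}
Crucially, $c^{\ast}$ depends only on $K$.

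Next, I would apply Theorem \ref{thm: main thm uniform} with the choice $\epsilon := c^{\ast}/2$. This produces a threshold $E_0 = E_0(c^{\ast}/2,\gamma,g)$, which then depends only on $g$ and $K$ (the parameter $\gamma$ being fixed by the standing hypotheses on the sequence $\{E\}$), such that for every $E\ge E_0$ and every coefficient vector $(a_\xi)$ of class $\cA(g)$ one has
\begin{equation*}
\frac{\cN_{f_a}}{E} > c_{NS}(\mu_{E,a}) - \frac{c^{\ast}}{2}.
\end{equation*}
Restricting to those $a$ satisfying the additional assumption $\mu_{E,a}\in K$, the definition of $c^{\ast}$ gives $c_{NS}(\mu_{E,a})\ge c^{\ast}$, so the right-hand side is bounded below by $c^{\ast}/2$, and the desired lower bound $\cN_{f_a}> c\cdot E$ follows with $c := c^{\ast}/2$.

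There is essentially no obstacle to this argument: the entirety of the deep analytic content is packaged in Theorem \ref{thm: main thm uniform}, and I need only exploit its uniformity over the class $\cA(g)$. The remaining ingredients are all soft: upgrading the infimum in \eqref{eq: cNS bdd from 0} to a minimum using compactness of $K$ and continuity of $c_{NS}$, and tracking that the threshold $E_0$ inherits dependence only on $g$ and $K$ through the single positive scalar $c^{\ast}$. No quantitative control on the modulus of continuity of $c_{NS}$ is required because we only need the value of $c^{\ast}$, not any comparison between $c_{NS}(\mu_{E,a})$ for nearby measures.
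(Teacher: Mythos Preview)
Your proposal is correct and is precisely the argument the paper has in mind: immediately before the corollary the authors remark that \eqref{eq:|Nf/E-cNS|<eps} yields $\cN_{f_a}\gtrsim E$ whenever $c_{NS}(\mu_{E,a})$ is bounded away from zero, and they give no further proof. Your write-up simply makes this explicit by choosing $\epsilon=c^{\ast}/2$ in Theorem~\ref{thm: main thm uniform}; note incidentally that the hypothesis \eqref{eq: cNS bdd from 0} is already stated as a minimum, so the appeal to compactness and continuity of $c_{NS}$ is only needed to justify that this minimum is attained, not to upgrade an infimum.
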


It is clear that Corollary \ref{cor:lower bound} is {\em optimal} in the sense that if we remove the hypothesis \eqref{eq: cNS bdd from 0} then no such lower bound exists.

\begin{example}
  An interesting example, where we can describe precisely the sets $K$ that satisfy \eqref{eq: cNS bdd from 0}, is given by considering the measures that have some extra symmetry. Specifically, let $\mathcal P_0(\Sone)$ denote the subset of $\mathcal P(\Sone)$ that consists of measures that are invariant with respect to rotation by $\pi/2$ and complex conjugation (that is, the map $(\zeta_1,\zeta_2)\mapsto (\zeta_1,-\zeta_2)$). For instance, if we choose all of the coefficients $a_\xi$ to be equal in modulus then $\mu_{E,a}\in \mathcal P_0(\Sone)$. More generally, if $|a_\xi|$ is constant for $\xi\in\{(\pm \xi_1,\pm \xi_2),(\pm \xi_2,\pm \xi_1)\}$ then $\mu_{E,a}\in \mathcal P_0(\Sone)$. By \cite{KW}*{Theorem 2.1} there are exactly two measures in $\mathcal P_0(\Sone)$ with vanishing Nazarov-Sodin constant, the Cilleruelo measure
  \begin{equation}
    \nu_0=\frac14(\delta_1+\delta_i+\delta_{-1}+\delta_{-i})
  \end{equation}
  and the tilted Cilleruelo measure $\nu_1$, which is obtained by rotating the Cilleruelo measure by $\pi/4$. Moreover $\nu_0$ (respectively $\nu_1$) is the only measure in $\mathcal P_0(\Sone)$ whose fourth Fourier coefficient is $1$ (respectively $-1$). Thus, for a closed subset of $K\subset\mathcal P_0(\Sone)$ we see that \eqref{eq: cNS bdd from 0} is equivalent to the condition
  \begin{equation*}
    \min_{\mu\in K}(1-|\widehat{\mu}(4)|)>0.
  \end{equation*}
\end{example}

Since $\mathcal P(\Sone)$ is compact with respect to the topology of weak convergence, and the functional $c_{NS}$ is continuous, Theorem~\ref{thm: main thm uniform} may be restated as follows (we write $\Rightarrow$ to indicate weak convergence of probability measures).

\begin{thm}\label{thm: main thm mu E cgt}
Let $\{E\}\subseteq S$ be a sequence of energies such that $N_{E}\to\infty$ as $E\rightarrow\infty$, and there exists $B(E)\to\infty$ such that $\cE$ satisfies $I(\gamma,B(E))$. Let $g$ be a slowly growing function, and $a_{E}=(a_{E,\xi})_{\xi\in\cE_{E}}$ a sequence of coefficients of class $\cA(g)$ such that
$$\mu_{E,a_{E}}\Rightarrow\mu.$$
Then as $E\rightarrow \infty$
\begin{equation*}
\cN_{f}= c_{NS}(\mu)\cdot E(1+o(1)),
\end{equation*}
where $f=f_{E;a_{E}}$.
\end{thm}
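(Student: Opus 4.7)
The statement is in effect a corollary of Theorem~\ref{thm: main thm uniform} combined with the continuity of the Nazarov--Sodin functional $c_{NS}$ on $\mathcal P(\Sone)$, exactly as hinted in the paragraph immediately preceding its statement. My proof plan chains these two ingredients together.

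First, I apply Theorem~\ref{thm: main thm uniform} to the given slowly growing function $g$, the fixed $\gamma<\frac12$, and the sequence $\{E\}$. Since each $a_E$ is of class $\cA(g)$ and the conclusion of Theorem~\ref{thm: main thm uniform} is uniform over all such coefficients, for every $\epsilon>0$ there exists $E_0=E_0(\epsilon,\gamma,g)$ such that for all $E\ge E_0$
\begin{equation*}
\left|\frac{\cN_{f_{E;a_E}}}{E}-c_{NS}(\mu_{E,a_E})\right|<\epsilon.
\end{equation*}

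Second, because $\mu_{E,a_E}\Rightarrow\mu$ and the map $c_{NS}:\mathcal P(\Sone)\to\R_+$ is continuous with respect to the topology of weak convergence (as recorded in the discussion following \eqref{eq:Ngmu(R)<->c piR^2}, citing \cite{KW}*{Theorem 3.1, Corollary 3.3}), we have $c_{NS}(\mu_{E,a_E})\to c_{NS}(\mu)$ as $E\to\infty$. A triangle inequality with the uniform estimate above, together with the fact that $\epsilon>0$ is arbitrary, then yields
\begin{equation*}
\frac{\cN_{f}}{E}\longrightarrow c_{NS}(\mu),
\end{equation*}
which is precisely the claim $\cN_f=c_{NS}(\mu)\cdot E(1+o(1))$ (understood in the case $c_{NS}(\mu)=0$ as the statement $\cN_f=o(E)$).

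There is no substantive obstacle in this deduction; all of the hard analytic work lies in the proof of Theorem~\ref{thm: main thm uniform}, while the continuity of $c_{NS}$ is an input imported from \cite{KW}. The only small point worth emphasising is that the hypothesis $I(\gamma,B(E))$ with a \emph{single} $\gamma<\frac12$ independent of $E$ is exactly what Theorem~\ref{thm: main thm uniform} requires in order for the threshold $E_0$ to depend only on $(\epsilon,\gamma,g)$ and not on $E$; this is what makes the passage to the limit $E\to\infty$ legitimate.
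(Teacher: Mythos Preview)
Your deduction that Theorem~\ref{thm: main thm uniform} together with the continuity of $c_{NS}$ implies Theorem~\ref{thm: main thm mu E cgt} is correct, and indeed the paper itself remarks in Section~3.1 that this implication is ``clear''. However, in the paper's logical architecture this is \emph{not} how Theorem~\ref{thm: main thm mu E cgt} is proved: the paper first shows the two theorems are equivalent (your direction being the easy one, the converse established by a compactness/contradiction argument), and then devotes the remainder of the paper to a \emph{direct} proof of Theorem~\ref{thm: main thm mu E cgt}. Theorem~\ref{thm: main thm uniform} is only obtained \emph{a posteriori}, as a consequence of Theorem~\ref{thm: main thm mu E cgt}. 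So invoking Theorem~\ref{thm: main thm uniform} here is circular.

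The paper's actual proof of Theorem~\ref{thm: main thm mu E cgt} (Section~3.3) assembles five analytic ingredients: Lemma~\ref{lem: localisation} (an integral-geometric sandwich localising the nodal count to boxes of scale $R/\sqrt{E}$), Proposition~\ref{prop: gathering} (replacing the blown-up eigenfunction $F_x$ by the finite trigonometric sum $\phi_x$ up to a $\mathcal C^1$-small error), Proposition~\ref{prop: pass to random} (Bourgain's de-randomisation, coupling the quasi-random coefficients $b_k(x)$ to genuine Gaussians $c_k$), Proposition~\ref{prop: Psi negligible} (stability of the nodal count under the accumulated $\mathcal C^1$ perturbation), and Proposition~\ref{prop: NS constant} (identifying the limiting constant via Nazarov--Sodin). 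These are chained with carefully ordered choices of the parameters $R,K,\delta,\epsilon_1,\ldots,\epsilon_5$. This is where ``all of the hard analytic work'' that you allude to actually lives, and it feeds directly into Theorem~\ref{thm: main thm mu E cgt}, not into Theorem~\ref{thm: main thm uniform}.
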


\subsection{Acknowledgments}

J. Bourgain first piqued our interest in finding deterministic conditions that guarantee \eqref{eq:Nf>>E}, we thank him for this and for interesting discussions. We had many useful conversations with M. Sodin, who contributed many ideas, in particular a simplification of the proof of Theorem \ref{thm: main thm uniform}. We are grateful to Z. Rudnick and P. Sarnak for their interest in our work and to B. Helffer, M. Krishnapur and I. Polterovich for stimulating discussions.

\section{Outline of the paper}
\subsection{The main ideas}
Here we sketch the main ideas and the structure of the proof of the two main theorems. The first step in our approach is to see that the diameter of ``most'' of the nodal domains of $f$ is at most of order $\frac 1{\sqrt E}$. We introduce a large parameter $1\ll R\ll\sqrt E$ and show that
\begin{equation*}
  \cN_f=\frac E{R^2}\int_{\T^2}\cN_f\left(x,\frac R{\sqrt{E}}\right)\,dx+\langle\text{small error}\rangle,
\end{equation*}
where $\cN_f(x,\epsilon)$ denotes the number of nodal domains of $f$ contained in the open box centred at $x$ of sidelength $\epsilon$; the precise statement is given by Lemma~\ref{lem: localisation} whose proof is given in Section~\ref{sec:Lemma local}. This allows us to study the nodal domains ``locally''. We then fix a point $x\in\T^2$, and ``blow-up'' the function $f$ in a small neighbourhood of $x$. Specifically we define
\begin{equation*}
  F_x(y)=f\left(x+\frac R{\sqrt{E}}y\right),
\end{equation*}
and a key step in our approach is to apply a de-randomisation technique, due to Bourgain, to see that $(F_x)_{x\in\T^2}$ approximates a Gaussian field, when we think of $\T^2$ as a probability space.

To this end we introduce another large parameter $K$, divide the unit circle into arcs of length $\frac 1{2K}$, denote the centre of each arc by $\zeta^{(k)}$ and denote by $\cE^{(k)}$ the elements of $\cE$ whose arguments are close to that of $\zeta^{(k)}$. Notice that
\begin{equation*}
  e\left(\left\langle\xi,x+\frac R{\sqrt E}y\right\rangle\right)=e\left(\left\langle\xi,x\right\rangle\right)e\left(\left\langle\frac \xi{\sqrt E},Ry\right\rangle\right)\approx e\left(\left\langle\xi,x\right\rangle\right)e\left(\left\langle \zeta^{(k)},Ry\right\rangle\right)
\end{equation*}
for $\xi \in \cE^{(k)}$. Here the approximation is in the $\mathcal C^1$ norm, and we emphasise that this expression allows us to ``separate'' the dependence on $x$ and $y$, that the expression involving $y$ depends only on $k$, and not on the value of $\xi \in \cE^{(k)}$, and that the number of terms $\xi \in \cE^{(k)}$ for which this holds is large. This allows us to show that\footnote{To simplify matters, the definition of $\phi_x$ and $b_k$ in this section differs slightly from their ``true definitions'' in the next section.} for ``most'' $x$
\begin{align*}
  F_x(y)=\sum_{k}\sum_{\xi \in \cE^{(k)}}a_\xi e\left(\left\langle\xi,x+\frac R{\sqrt E}y\right\rangle\right) &\approx \sum_{k} \left(\sum_{\xi\in\cE^{(k)}} a_\xi e(\langle \xi,x\rangle)\right) e(\langle R\zeta^{(k)},y\rangle)\\
  &= \sum_{k} b_k(x) e(\langle R\zeta^{(k)},y\rangle) = \phi_x(y)
\end{align*}
where the approximation is in the $\mathcal C^1$ norm and
\begin{equation*}
  b_k(x) = \sum_{\xi\in\cE^{(k)}} a_\xi e(\langle \xi,x\rangle),
\end{equation*}
the precise statement is given in Proposition~\ref{prop: gathering} and proved in Section~\ref{sec:approx F by phi}. We then show that these coefficients $b_k(x)$, when we allow $x$ to vary over $\T^2$, approximate a sequence of independent normal random variables in distribution, which allows us to see that\footnote{There is also difference in the definition of $c_k$ from that of the next section, which corresponds to the change in $b_k$.}
\begin{equation*}
  \phi_x(y)\approx\sum_{k} c_k e(\langle R\zeta^{(k)},y\rangle) = \Phi(y),
\end{equation*}
where once more the approximation is in the $\mathcal C^1$ norm and $c_k$ are independent normal random variables with an appropriately chosen variance. This is the content of Proposition~\ref{prop: pass to random} which we prove in Section~\ref{sec:derandom}.

Let us outline, heuristically, why we might expect the sequence $b_k$ to be approximately normal, and where the assumptions we impose on the energy levels and the coefficients arise. Intuitively, we may regard each $b_k$ as a sum of `random variables' (thinking of $\T^2$ as a probability space), and so under some reasonable assumptions expect to see Gaussianity, via the CLT, if the size of each $\cE^{(k)}$ is growing. Clearly it is reasonable to assume that no single coefficient $a_\xi$ is too large (in the extreme case, where one single coefficient has modulus $1$ and the remainder are $0$, we do not have any Gaussianity), and this is guaranteed by condition \eqref{eq: bound on coeff} --- in fact the hypothesis \eqref{eq: bound on coeff} is much stronger. Furthermore, it is reasonable to see convergence in distribution to a normal by convergence of moments, and computing the (joint) moments of the sequence $b_k$ immediately leads one to consider vanishing sums of elements of $\cE$. It is in order to control these moments that we impose the hypothesis $I(\gamma,B)$. Both of these hypotheses are used in the proof of Lemma~\ref{lem: eps Gaussian}, which we consider to be the heart of the proof.

Finally we use the work of Nazarov and Sodin to compute asymptotically the number of nodal domains of the approximating Gaussian field. First, in Proposition~\ref{prop: Psi negligible}, we show that the `small' (in the $\mathcal C^1$ norm) errors we made in passing from $F_x$ to $\phi_x$ and from $\phi$ to $\Phi$ do not contribute significantly to the nodal count, by using techniques developed by Nazarov and Sodin. Then, in Proposition~\ref{prop: NS constant}, we use a theorem of Nazarov and Sodin to compute the asymptotic growth of the number of nodal domains of $\Phi$. The proofs of these propositions are to be found in Section~\ref{sec: Naz Sod}.

\subsection{Outline of the paper}
The paper is organised as follows. In Section~\ref{sec:Proofs and notn} we introduce the main notation that will be used throughout the paper, we state Lemma~\ref{lem: localisation} and Propositions~\ref{prop: gathering}-\ref{prop: NS constant} and we prove Theorems~\ref{thm: main thm uniform} and \ref{thm: main thm mu E cgt} assuming the lemma and propositions. The remainder of the paper is dedicated to proving these ingredients.

Lemma~\ref{lem: localisation}, which allows us to ``localise'' the problem, is proved in Section~\ref{sec:Lemma local}. In Section~\ref{sec:approx F by phi} we prove Proposition~\ref{prop: gathering}, which allows us to pass from studying the nodal domains of $F_x$ to $\phi_x$. In Section~\ref{sec:derandom} we apply Bourgain's derandomisation technique to prove Proposition~\ref{prop: pass to random}, which connects the nodal count of the deterministic function $\phi$ to that of the Gaussain field $\Phi$. Finally we prove Propositions~\ref{prop: Psi negligible} and \ref{prop: NS constant} in Section~\ref{sec: Naz Sod}, where we apply Nazarov and Sodin's results.

Finally some notation: We use $c$ or $C$ to denote absolute constants, that do not depend on any of the parameters, but whose value may change from one occurrence to the next. We denote by $C(B)$ (we choose $B$ for purely demonstrative purposes, in principle it may be any parameter) quantities that we are thinking of as being constant for `local purposes' but that depend on the parameter $B$. We write $f\lesssim g$ if there exists an absolute constant $C$ such that $f\leq Cg$. We write $f=O(g)$ if $|f|\lesssim g$. We write $f=O_B(g)$ if $|f|\leq C(B) g$.

\section{Proofs of Theorems~\ref{thm: main thm uniform} and \ref{thm: main thm mu E cgt}}\label{sec:Proofs and notn}

\subsection{Theorem \ref{thm: main thm mu E cgt} implies Theorem \ref{thm: main thm uniform}}
We show that the statements of the two main theorems are equivalent. It is clear that Theorem~\ref{thm: main thm uniform} implies Theorem~\ref{thm: main thm mu E cgt}. To see the converse implication, suppose that Theorem~\ref{thm: main thm mu E cgt} holds, but that Theorem~\ref{thm: main thm uniform} fails. Then there exists $\epsilon>0$ and a sequence $E_j,a_j$ with $E_j\to\infty$ satisfying the hypotheses with
\begin{equation}\label{eq:contradiction}
  \Abs{\frac{\cN_f}{E_j}-c_{NS}(\mu_{E_j,a_j})}\geq\epsilon
\end{equation}
for all $j$. By passing to a subsequence, we may assume that $$\mu_{E_j,a_j}\Rightarrow\mu$$ for some $\mu\in\mathcal P(\Sone)$, implying that $$c_{NS}(\mu_{E_j,a_j})\to c_{NS}(\mu).$$ Furthermore, by Theorem~\ref{thm: main thm mu E cgt} we have $\frac{\cN_f}{E_j}\to c_{NS}(\mu)$, which contradicts \eqref{eq:contradiction}. The rest of this paper is dedicated to proving Theorem~\ref{thm: main thm mu E cgt}.

\subsection{Main notation: considering eigenfunctions locally}

Recall that the main object of our study $f$ is defined by \eqref{eq:fa def}. Choose a large parameter $R>0$ and write
\begin{equation}\label{eq:Fx def}
  F_x(y)=f\left(x+\frac R{\sqrt{E}}y\right)
\end{equation}
for $y\in\left(-1,1\right)^2$. We will be mainly interested in $y\in\left(-\frac12,\frac12\right)^2$, and for any function $F(y)$ we will write $\cN_{F}$ for the number of nodal domains of $F$ which are contained in $\left(-\frac12,\frac12\right)^2$. However, during our argument we will need to consider nodal sets in a slightly bigger region, and for this reason we will always consider the norms of such functions on the bigger region $\left(-1,1\right)^2$.

We fix a (large) integer $K$ and divide the unit circle into arcs of length $\frac1{2K}$; specifically, identifying the unit circle with the interval $(-\frac12,\frac12]$ in the usual manner, we define the arcs
\begin{equation*}
  I_k=\bigg(\frac{k-1}{2K},\frac{k}{2K}\bigg]
\end{equation*}
for $-K+1\leq k\leq K$. (Note that for $1\leq k\leq K$ we have $\zeta\in I_k$ if and only if $-\zeta\in I_{K-k}$). Corresponding to these arcs we put
\begin{equation*}
  \cE^{(k)}=\{\xi\in\cE:\frac\xi {\sqrt E}\in I_k\}.
\end{equation*}

Fix $0<\delta<1$ and define
\begin{equation*}
  \K=\{-K+1\leq k\leq K: \mu_{E,a}(I_k)\geq\delta\}
\end{equation*}
and
\begin{equation*}
  \mathcal G=\cup_{k\notin\K}\cE^{(k)}.
\end{equation*}
We also write $\K^+$ for the set of positive $k\in\K$, and $\K^-$ for the non-positive terms (we have $k\in\K^+$ if and only if $k-K\in\K^-$). Note that for $k\in\K$ we have
\begin{equation*}
  \delta\leq\sum_{\xi\in\cE^{(k)}}|a_{\xi}|^2\leq M |\cE^{(k)}|
\end{equation*}
which implies that, under the assumptions that $N\to\infty$ as $E\to\infty$ and \eqref{eq: bound on coeff}, for fixed $\delta$,
\begin{equation}\label{eq: lots of freq in support}
  |\cE^{(k)}|\to\infty\text{ as }E\to\infty.
\end{equation}

For $k\in\K$ we define $\zeta^{(k)}$ to be the midpoint of $I_k$ (note that $\zeta^{(k)}=-\zeta^{(k+K)}$ for $k\in\K^-$). We split $f(x)=\widetilde{\psi}(x)+\tilde{f}(x)$ where
\begin{equation}\label{def: psitil}
  \widetilde{\psi}(x)=\sum_{\xi\in\mathcal G} a_\xi e(\langle \xi,x\rangle)
\end{equation}
and
\begin{equation*}
  \tilde{f}(x)= \sum_{k\in\K}\sum_{\xi\in\cE^{(k)}} a_\xi e(\langle \xi,x\rangle)
\end{equation*}
and correspondingly split (recall \eqref{eq:Fx def}) $F_x(y)=\widetilde{\psi}_{0,x}(y)+\tilde{F}_x(y)$ where
\begin{equation}\label{def: psi0til}
  \widetilde{\psi}_{0,x}(y)=\widetilde{\psi}\left(x+\frac R{\sqrt{E}}y\right)
\end{equation}
and
\begin{equation*}
  \tilde{F}_x(y)=\tilde{f}\left(x+\frac R{\sqrt{E}}y\right).
\end{equation*}
Note that
\begin{equation}\label{eq: Ftil simple expansion}
  \tilde{F}_x(y)= \sum_{k\in\K}\sum_{\xi\in\cE^{(k)}} a_\xi e\left(\left\langle \xi,x+\frac R{\sqrt{E}}y\right\rangle\right)
\end{equation}
and, since $\mu_{E,a}(I_k)\neq0$ for $k\in\K$, if we define
\begin{equation*}
  f_k(x,y)=\frac1{\mu_{E,a}(I_k)^{\frac12}} \sum_{\xi\in\cE^{(k)}} a_\xi e(\langle \xi,x\rangle) e\left(\left\langle R\left(\frac \xi{\sqrt{E}}-\zeta^{(k)}\right),y\right\rangle\right),
\end{equation*}
then
\begin{equation*}
  \tilde{F}_x(y)= \sum_{k\in\K} \mu_{E,a}(I_k)^{\frac12} f_k(x,y) e(\langle R\zeta^{(k)},y\rangle).
\end{equation*}
Define\footnote{Note the aforementioned difference between the definition of $b_k$ here and that of the previous section, we have normalised so that $\int_{\T^2}|b_k(x)|^2\,dx=1$. Henceforth, the definition given here is to be taken to be the definition of $b_k$}
\begin{equation}\label{eq: def bk}
  b_k(x) =f_k(x,0) =\frac1{\mu_{E,a}(I_k)^{\frac12}} \sum_{\xi\in\cE^{(k)}} a_\xi e(\langle \xi,x\rangle),
\end{equation}
and\footnote{Again note the presence of the factors $\mu_{E,a}(I_k)^{\frac12}$ and the restriction of the summation to the set $\K$ in the definition of $\phi_x$, which differs from the previous section.}
\begin{equation}\label{def: phi}
  \phi_x(y)= \sum_{k\in\K} \mu_{E,a}(I_k)^{\frac12} b_k(x) e(\langle R\zeta^{(k)},y\rangle);
\end{equation}
these coefficients satisfy $b_k=\bar{b}_{K-k}$ for $k\in\K^+$.

\subsection{Proof of Theorem \ref{thm: main thm mu E cgt}}

Our proof of Theorem \ref{thm: main thm mu E cgt} contains a number of different ingredients, which we now list here. First we state that most nodal domains of $f$ are `small' --- their diameter is at most of order $\frac 1{\sqrt E}$ --- which will allow us to study the number of nodal domains `locally', i.e., to study $\cN_f$ by counting the nodal domains of $F_x$. Recall that for a real-valued function $f$ on $\T^2$ we write $\cN_f(x,\epsilon)$ for the number of nodal domains of $f$ contained in the open box centred at $x$ of sidelength $\epsilon$.

\begin{lemma}[Most domains are small]\label{lem: localisation}
  For every $R>1$ and $E>R^2$ we have
  \begin{equation*}
  \cN_f=\frac E{R^2}\int_{\T^2}\cN_f\left(x,\frac R{\sqrt{E}}\right)\,dx+O\left(\frac ER\right).
  \end{equation*}
\end{lemma}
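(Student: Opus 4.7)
The plan is to evaluate the right-hand side integral by Fubini as a weighted sum over nodal domains of $f$, compare the resulting sum to $r^{2}\cN_{f}$ with $r = R/\sqrt{E}$, and bound the discrepancy using the classical Donnelly-Fefferman estimate on the length of the nodal set. For each nodal domain $D$ of $f$ let $w(D), h(D) \in (0,1]$ denote the arc-lengths of its projections onto the two coordinate circles of $\T$ (well-defined since $D$ is connected). A direct check shows that $D \subseteq B(x, r)$, the open box of side-length $r$ centred at $x$, exactly when $w(D), h(D) < r$ and $x$ lies in a specific rectangular region of dimensions $(r - w(D)) \times (r - h(D))$ on $\T^{2}$. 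Fubini then yields
\[
\int_{\T^{2}} \cN_{f}(x, r)\,dx = \sum_{D}(r - w(D))_{+}(r - h(D))_{+},
\]
so
\[
r^{2}\cN_{f} - \int_{\T^{2}}\cN_{f}(x, r)\,dx = \sum_{D}\bigl[r^{2} - (r - w(D))_{+}(r - h(D))_{+}\bigr].
\]
I split this sum according to whether $D$ is \emph{small} ($w(D), h(D) < r$, with summand $r(w+h) - wh \leq r(w(D) + h(D))$) or \emph{large} (summand trivially $\leq r^{2}$).

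The argument then reduces to establishing (a) $\sum_{D}(w(D) + h(D)) \lesssim \sqrt{E}$ and (b) the number $\cN_{f}^{\mathrm{lg}}$ of large domains satisfies $\cN_{f}^{\mathrm{lg}} \lesssim \sqrt{E}/r$; both will follow from the Donnelly-Fefferman bound $H^{1}(\mathcal{Z}(f)) \lesssim \sqrt{E}$ on the $1$-dimensional Hausdorff measure of the nodal set (valid since $f$ is an eigenfunction with eigenvalue $4\pi^{2}E$). For (a), fix $x_{0} \in \T$ and observe that the vertical circle $\{x_{0}\}\times\T$ is divided into arcs by its intersections with $\mathcal{Z}(f)$, each arc lying in a unique nodal domain; the number of such arcs is at most one more than $\#\{y : (x_{0}, y) \in \mathcal{Z}(f)\}$ and equals $\#\{D : x_{0} \in \pi_{1}(D)\}$. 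Integrating in $x_{0}$ and using the coarea inequality for the $1$-Lipschitz projection gives $\sum_{D}w(D) \leq H^{1}(\mathcal{Z}(f)) + 1 \lesssim \sqrt{E}$, and symmetrically for $h(D)$. For (b), a large $D$ has (say) $w(D) \geq r$; lifting a connected component $\widetilde{D} \subset \R^{2}$ of its preimage under the universal cover, the closed interval $\overline{\pi_{1}(\widetilde{D})}$ has length $w(D) \geq r$ and is contained in $\pi_{1}(\partial \widetilde{D})$, so $H^{1}(\partial D) = H^{1}(\partial \widetilde{D}) \geq r$ by $1$-Lipschitz projection. Since each arc of $\mathcal{Z}(f)$ bounds at most two nodal domains, $\sum_{D} H^{1}(\partial D) \leq 2 H^{1}(\mathcal{Z}(f))$, whence $r \cdot \cN_{f}^{\mathrm{lg}} \lesssim \sqrt{E}$.

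Combining the two contributions, $r^{2}\cN_{f} - \int_{\T^{2}}\cN_{f}(x, r)\,dx \lesssim r\sqrt{E} + r^{2} \cdot \sqrt{E}/r = r\sqrt{E} = R$; since the right-hand side is non-negative from the Fubini identity, dividing through by $r^{2} = R^{2}/E$ produces the claimed $O(E/R)$. The main obstacle is invoking the Donnelly-Fefferman (equivalently Yau/Br\"uning) nodal-length estimate with an absolute implied constant in this toral setting, together with handling the (exceptional) possibility of nodal domains that wrap around $\T^{2}$; once these points are granted, the rest of the argument is an elementary integral-geometric accounting.
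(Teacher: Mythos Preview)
Your proof is correct and follows essentially the same integral-geometric sandwich argument as the paper: the Fubini identity $\int_{\T^2}\cN_f(x,r)\,dx=\sum_D (r-w(D))_+(r-h(D))_+$, the split into small and large domains, and the Donnelly--Fefferman bound $H^1(\mathcal Z(f))\lesssim\sqrt E$ to control both $\sum_D(w(D)+h(D))$ and the number of large domains. Two minor remarks: your claim that the number of arcs on a vertical circle \emph{equals} $\#\{D:x_0\in\pi_1(D)\}$ should be an inequality $\ge$ (a single domain may meet the circle in several arcs), but this is the direction you need; and the wrapping-domain issue you flag is harmless, since any $D$ with $w(D)\ge r$ still satisfies $H^1(\partial D)\ge r$ directly by projecting $\partial D$ onto the first coordinate.
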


The proof of this lemma is given in Section~\ref{sec:Lemma local}. Recalling that $\cN_{F}$ denotes the number of nodal domains of $F$ which are contained in $\left(-\frac12,\frac12\right)^2$ we note that
$$\cN_f\left(x,\frac R{\sqrt{E}}\right)=\cN_{F_x}.$$
The next proposition, which will be proved in Section~\ref{sec:approx F by phi}, allows us to replace $F_x$ by (a perturbation of) $\phi_x$ (recall \eqref{def: phi}), for ``most'' $x$.

\begin{prop}[Approximating $F_{x}$ with $\phi_{x}$]\label{prop: gathering}
  Given $1<R<\sqrt{E}$ and $\epsilon_1,\epsilon_2>0$ there exist $K_0(R,\epsilon_1,\epsilon_2)$, $\delta_0(R,K,\epsilon_1,\epsilon_2)$ and $\psi_x\colon(-1,1)^2\to\R$ satisfying, for all $x\in\T^2$,
  \begin{rlist}
    \item $\|\psi_x\|_{\mathcal C^1}<\epsilon_1$ and
    \item $\Delta(\phi_x+\psi_x)=-4\pi^2R^2(\phi_x+\psi_x)$
  \end{rlist}
  such that for $K\geq K_0$ and $\delta\leq \delta_0$ we have
  \begin{equation*}
    \int_{\T^2}\cN_{F_x}\,dx=\int_{\T^2}\cN_{\phi_x+\psi_x}\,dx +O(\epsilon_2R^2).
  \end{equation*}
\end{prop}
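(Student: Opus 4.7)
The natural candidate is $\psi_x := F_x - \phi_x$. Condition (ii) is automatic: by the chain rule applied to the Schr\"odinger equation $\Delta f + 4\pi^2 E f = 0$, the rescaling $F_x$ satisfies $\Delta_y F_x = -4\pi^2 R^2 F_x$, and $\phi_x$ is a linear combination of plane waves $e(\langle R\zeta^{(k)},y\rangle)$ of wavenumber $R$, hence also an eigenfunction of $\Delta_y$ with the same eigenvalue. With this choice $\phi_x + \psi_x = F_x$, so $\cN_{\phi_x+\psi_x} = \cN_{F_x}$ pointwise and the integral identity is trivial; all the work lies in the $\mathcal C^1$ bound (i). The plan is to define a \emph{good set} $G \subseteq \T^2$ on which $\|F_x - \phi_x\|_{\mathcal C^1((-1,1)^2)} < \epsilon_1$, set $\psi_x := F_x - \phi_x$ on $G$, and set $\psi_x \equiv 0$ on $\T^2 \setminus G$ (the latter choice trivially satisfies (i), and still satisfies (ii) since $\phi_x$ alone is an eigenfunction of $\Delta_y$).

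Decompose $F_x - \phi_x = \widetilde{\psi}_{0,x} + R_x$, where
$$R_x(y) = \sum_{k\in\K}\sum_{\xi\in\cE^{(k)}} a_\xi\, e(\langle\xi,x\rangle)\Bigl[e\bigl(\bigl\langle \tfrac{R\xi}{\sqrt E},y\bigr\rangle\bigr) - e(\langle R\zeta^{(k)},y\rangle)\Bigr]$$
is the error from replacing each $\xi/\sqrt E$ with the arc midpoint $\zeta^{(k)}$. Since $|\xi/\sqrt E - \zeta^{(k)}| \leq 1/(2K)$ for $\xi\in\cE^{(k)}$ and $|y|\leq\sqrt 2$ on $(-1,1)^2$, the bracket is $O(R/K)$ in sup norm and $O(R^2/K)$ after a $y$-derivative. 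Orthogonality of $\{e(\langle\xi,\cdot\rangle)\}$ on $\T^2$ together with Parseval in $x$ (and integration over $y$) yields
$$\int_{\T^2}\|R_x\|_{L^2((-1,1)^2)}^2\,dx \lesssim \frac{R^2}{K^2}, \qquad \int_{\T^2}\|\nabla_y R_x\|_{L^2((-1,1)^2)}^2\,dx \lesssim \frac{R^4}{K^2}.$$
A standard local elliptic (``Bernstein-type'') estimate for the Helmholtz equation $\Delta u + 4\pi^2 R^2 u = 0$ upgrades these $L^2$ bounds to $\mathcal C^1$ bounds on a slightly shrunken box, so that $\int_{\T^2}\|R_x\|_{\mathcal C^1}^2\,dx \lesssim R^4/K^2$. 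Markov's inequality shows that $\|R_x\|_{\mathcal C^1} \geq \epsilon_1/2$ holds on a set of measure $O(R^4/(K^2\epsilon_1^2))$, which is $\leq \epsilon_2/2$ provided $K \geq K_0(R,\epsilon_1,\epsilon_2)$ is taken large.

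For the second piece $\widetilde{\psi}_{0,x}$, Parseval in $x$ and Fubini give
$$\int_{\T^2}\|\widetilde{\psi}_{0,x}\|_{L^2((-1,1)^2)}^2\,dx = 4\sum_{\xi\in\mathcal G}|a_\xi|^2 = 4\sum_{k\notin\K}\mu_{E,a}(I_k) \leq 8K\delta,$$
since $\mu_{E,a}(I_k) < \delta$ for $k \notin \K$ and there are at most $2K$ such arcs. The same elliptic upgrade yields $\int\|\widetilde{\psi}_{0,x}\|_{\mathcal C^1}^2\,dx \lesssim K\delta R^2$, and Markov again shrinks $\{\|\widetilde{\psi}_{0,x}\|_{\mathcal C^1} \geq \epsilon_1/2\}$ below $\epsilon_2/2$ provided $\delta \leq \delta_0(R,K,\epsilon_1,\epsilon_2)$. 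Defining $G$ as the complement of the union of the two exceptional sets gives $|\T^2\setminus G|\leq \epsilon_2$ and $\|F_x - \phi_x\|_{\mathcal C^1} < \epsilon_1$ on $G$. The integral identity follows since $\cN_{\phi_x+\psi_x} = \cN_{F_x}$ on $G$, while on $\T^2\setminus G$ both $\cN_{F_x}$ and $\cN_{\phi_x+\psi_x} = \cN_{\phi_x}$ are $O(R^2)$ by a local Courant-type bound for Laplace eigenfunctions of wavenumber $R$ restricted to a unit box, so the error is $\lesssim R^2|\T^2\setminus G|\lesssim \epsilon_2 R^2$.

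The main technical obstacle I anticipate is the local Bernstein/elliptic-regularity step on a box rather than on the ambient torus where Parseval lives: plane waves are not orthogonal after restriction, so one must either extend via a smooth cutoff to a larger torus or exploit a Poisson-type kernel reflecting that the functions solve the Helmholtz equation exactly. Since $R$ is fixed throughout the construction while only $K$ and $\delta$ are driven by $\epsilon_1,\epsilon_2$, the implicit constants are harmless and the argument closes.
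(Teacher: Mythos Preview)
Your proposal is correct and follows essentially the same route as the paper: the same definition of $\psi_x$ (equal to $F_x-\phi_x$ on a good set, zero on a small bad set), the same decomposition $F_x-\phi_x=\widetilde{\psi}_{0,x}+(\tilde F_x-\phi_x)$, Chebyshev to control the bad set, and Courant's bound $\cN=O(R^2)$ on the bad set. The one place where the paper differs is exactly the ``technical obstacle'' you flag: instead of a Bernstein/elliptic-regularity upgrade from $L^2$ to $\mathcal C^1$, the paper simply uses the Sobolev embedding $\|u\|_{\mathcal C^1}\lesssim\|u\|_{H^3}$ on $(-1,1)^2$ and computes $\int_{\T^2}\|D^\alpha(\cdot)\|_{L^2_y}^2\,dx$ directly for each $|\alpha|\le 3$ via Parseval in $x$, yielding bounds $O(R^8/K^2)$ and $O(R^6\delta K)$; this sidesteps the restriction-to-a-box issue entirely since the orthogonality is in the $x$-variable, not in $y$.
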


Having reduced our problem to the study of the number of nodal domains of the field $\phi_x+\psi_x$, where
\begin{equation*}
  \phi_x(y)= \sum_{k\in\K} \mu_{E,a}(I_k)^{\frac12} b_k(x) e(\langle R\zeta^{(k)},y\rangle)
\end{equation*}
and $\psi_x$ is a small perturbation in the $\mathcal C^1$ norm, our next step is to apply a de-randomisation technique due to Bourgain \cite{B}. We will work on two different probability spaces, an abstract one $\Omega$, and $\T^2$ (equipped with the Lebesgue measure). We will refer to measurable maps from $\Omega$ as \emph{random}, and from $\T^2$ as \emph{quasi-random}. We will use $\Pro$ and $\E$ to refer to the abstract probability space $\Omega$, while $\Prr$ and $\Ee$ will refer to $\T^2$. ($\Prr$ is of course just Lebesgue measure, normalised appropriately.) We will always assume that $\omega\in\Omega$ and $x\in\T^2$. We view $\phi_x+\psi_x$ as a quasi-random field, and note that
\begin{equation*}
  \int_{\T^2}\cN_{\phi_x+\psi_x}\,dx=\Ee[\cN_{\phi+\psi}].
\end{equation*}

We show that it is enough to study the nodal domains of (small perturbations of) the random Gaussian field\footnote{Note again the difference between the definition here and that of the previous section.}
\begin{equation*}
  \Phi_\omega(y)= \sum_{k\in\K} \mu_{E,a}(I_k)^{\frac12} c_k(\omega) e(\langle R\zeta^{(k)},y\rangle),
\end{equation*}
where $c_k$ are i.i.d. standard (complex) Gaussian for $k\in\K^+$, and $c_k=\bar{c}_{k+K}$ for $k\in \K^-$. That is, the function $\Phi$ is constructed by replacing the quasi-random variables $b_k$ by the random variables $c_k$. The precise statement is the following, whose proof is given in Section~\ref{sec:derandom}:

\begin{prop}[Passage to nodal domains of random functions]\label{prop: pass to random}
Let $0<\epsilon_1,\epsilon_3,\delta<1$ and $K,R>1$ be given.  Let $\psi$ be any function satisfying (i) and (ii) of Proposition~\ref{prop: gathering}. Suppose that $N\to\infty$ as $E\to\infty$ and that the coefficients $a_\xi$ are of class $\cA(g)$, for some slowly growing function $g$. Then there exists $B_0=B_0(R,K,\epsilon_1,\epsilon_3)$ and $E_1=E_1(R,K,\epsilon_1,\epsilon_3,\delta,\gamma,g)$ such that if $\cE$ satisfies $I(\gamma,B_0)$ for some fixed $\gamma$,
there exists a random $\Psi_\omega\colon[-1,1]^2 \to \R$ such that for all $E\geq E_1$:
\begin{rlist}
\item $\|\Psi_\omega\|_{\mathcal C^1}^2\leq 2\epsilon_1$ for all $\omega$.

\item $\Delta(\Phi_\omega+\Psi_\omega)=-4\pi^2R^2(\Phi_\omega+\Psi_\omega)$  for all $\omega$ (so $\Psi_\omega$ is in $\mathcal C^2$ for
all $\omega$).

\item There exists $\Omega'\subset\Omega$ and a measure-preserving map $\tau:\Omega'\to\T^2$ satisfying

\begin{itemize}
\item $\Pro[\Omega\setminus\Omega']=O(\epsilon_3)$,

\item $\Prr[\T^2\setminus\tau(\Omega')]=O(\epsilon_3)$ and

\item $\Phi_\omega+\Psi_\omega=\phi_{\tau(\omega)}+\psi_{\tau(\omega)}$ for all $\omega\in\Omega'$.

\end{itemize}

\item $\E[\cN_{\Phi+\Psi}]=\Ee[\cN_{\phi+\psi}]+O(\epsilon_3 R^2)$.

\end{rlist}

\end{prop}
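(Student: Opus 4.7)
The plan is to execute Bourgain's de-randomisation scheme: show that the joint law of the quasi-random vector $\mathbf{b}(x):=(b_k(x))_{k\in\K^+}$ under $x\in\T^2$ is close to the standard complex Gaussian law of $\mathbf{c}(\omega):=(c_k(\omega))_{k\in\K^+}$, then build a measure-preserving coupling $\tau$ that nearly identifies them, and absorb the residual discrepancy into $\Psi_\omega$. The technical heart is a quantitative Gaussian-moment convergence: for every multi-index $(p_k,q_k)_{k\in\K^+}$ with $\sum(p_k+q_k)\le B_0$,
$$\Ee\Big[\prod_{k\in\K^+} b_k(x)^{p_k}\overline{b_k(x)}^{q_k}\Big]=\prod_{k\in\K^+}\delta_{p_kq_k}\,p_k!+o_{E\to\infty}(1).$$
Expanding via \eqref{eq: def bk} and integrating over $x\in\T^2$ yields a sum over frequency configurations whose signed sum vanishes. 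The Gaussian main term comes from the complete pairings (each $\xi_i$ matched with an equal $\eta_j$ lying in the same $\cE^{(k)}$), producing the Wick formula after the normalisation. The non-Gaussian contributions correspond to minimally vanishing subsums of length $3\le l\le B_0$; by $I(\gamma,B_0)$ there are at most $N^{\gamma l}$ such subsums, while each contributes at most $M^{l/2}\le(g(N)/N)^{l/2}$ from $\cA(g)$, so their combined weight is $O\big(N^{(\gamma-1/2)l}g(N)^{l/2}\big)=o(1)$ for $\gamma<1/2$. Choosing $B_0=B_0(R,K,\epsilon_1,\epsilon_3)$ large enough and $E\ge E_1$ makes the total error arbitrarily small.

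Since $|\K^+|\le K$ is bounded, moment closeness up to order $B_0$ implies (via polynomial approximation of bounded Lipschitz test functions plus truncation) closeness of the laws $\nu_E$ of $\mathbf{b}$ and $\gamma$ of $\mathbf{c}$ on cubes $Q_j\subset\C^{|\K^+|}$ of side $\rho$. Cover a ball of large radius $L$ in $\C^{|\K^+|}$ by finitely many such cubes, discard its complement (of Gaussian measure $\ll\epsilon_3$ for $L$ large), and within each pair $(\mathbf{b}^{-1}(Q_j),\mathbf{c}^{-1}(Q_j))$ choose a measure-preserving bijection between subsets of common measure $\min(\Prr[\mathbf{b}\in Q_j],\Pro[\mathbf{c}\in Q_j])$; this is possible because $\T^2$ and $\Omega$ may be realised as atomless standard Borel probability spaces. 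The total unpaired mass is controlled by $\sum_j|\Prr[\mathbf{b}\in Q_j]-\Pro[\mathbf{c}\in Q_j]|$ plus the truncation error, which is $O(\epsilon_3)$ since the number of cubes depends only on $K,L,\rho$. The resulting $\tau\colon\Omega'\to\tau(\Omega')$ is measure-preserving with $\Pro[\Omega\setminus\Omega'],\Prr[\T^2\setminus\tau(\Omega')]=O(\epsilon_3)$ and $|b_k(\tau(\omega))-c_k(\omega)|\le\rho$ for all $k\in\K^+$ and $\omega\in\Omega'$ (the $k\in\K^-$ case follows by the conjugation relations $b_k=\bar b_{K-k}$, $c_k=\bar c_{k+K}$).

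With such $\tau$ in hand, set $\Psi_\omega:=\psi_{\tau(\omega)}+\phi_{\tau(\omega)}-\Phi_\omega$ on $\Omega'$ and $\Psi_\omega\equiv 0$ otherwise. Clause (iii) is then immediate, and (ii) holds summand by summand: $\psi_{\tau(\omega)}$ satisfies the eigenvalue equation by Proposition~\ref{prop: gathering}(ii), and $\phi_{\tau(\omega)}-\Phi_\omega$ is a linear combination of plane waves $e(\langle R\zeta^{(k)},y\rangle)$ with $|\zeta^{(k)}|=1$. For (i),
$$\|\phi_{\tau(\omega)}-\Phi_\omega\|_{\mathcal C^1}\le CR\sum_{k\in\K^+}\mu_{E,a}(I_k)^{1/2}|b_k(\tau(\omega))-c_k(\omega)|\le C\,R\sqrt{K}\,\rho,$$
which, combined with $\|\psi_{\tau(\omega)}\|_{\mathcal C^1}\le\epsilon_1$, gives $\|\Psi_\omega\|_{\mathcal C^1}^2\le 2\epsilon_1$ once $\rho$ is chosen small enough depending on $R,K,\epsilon_1$. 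For (iv), split $\E[\cN_{\Phi+\Psi}]$ over $\Omega'$ and $\Omega\setminus\Omega'$: the measure-preservation of $\tau$ converts the first piece into $\int_{\tau(\Omega')}\cN_{\phi_x+\psi_x}\,dx$, and the discrepancy from $\Ee[\cN_{\phi+\psi}]$ is bounded by the Courant-type estimate $\cN_{\phi_x+\psi_x},\cN_{\Phi_\omega+\Psi_\omega}=O(R^2)$ on $[-\tfrac12,\tfrac12]^2$ (from the eigenvalue $-4\pi^2R^2$) times the unpaired measure $O(\epsilon_3)$, giving $O(\epsilon_3 R^2)$. The chief obstacle throughout is the moment lemma of the first paragraph, where the two hypotheses $I(\gamma,B_0)$ and $a\in\cA(g)$ enter together in a delicate combination: $I(\gamma,B_0)$ controls the combinatorial complexity of non-paired vanishing subsums, while $\cA(g)$ renders each individual contribution negligible.
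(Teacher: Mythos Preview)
Your proposal is correct and follows essentially the paper's own de-randomisation scheme: the moment lemma (the paper's Proposition~6.1, with its diagonal/off-diagonal split into Claims~6.2--6.3), then a cube-partition coupling of $\mathbf b$ and $\mathbf c$, then the definition $\Psi_\omega=\psi_{\tau(\omega)}+\phi_{\tau(\omega)}-\Phi_\omega$ on $\Omega'$ and $0$ otherwise, and the verification of (i)--(iv) exactly as you outline (the paper even uses the same Courant bound for (iv)).

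The one place where the routes diverge is the passage from ``moments up to order $B_0$ close'' to ``probabilities on the sub-cubes close''. You invoke polynomial approximation of Lipschitz test functions plus truncation; the paper instead writes $\Prr[b\in Q]$ via L\'evy's inversion formula, truncates the oscillatory integral at level $T_0$, controls the truncation error by a pigeonhole shift of the cube grid (Claim~6.6, ensuring $b$ stays away from the cube boundaries with high probability), and then bounds the characteristic-function difference $|\varphi(t)-e^{-|t|^2/4}|$ on $[-T_0,T_0]^{2\kappa}$ by the moment lemma. Both routes are legitimate; the paper's is more explicit about the quantitative dependence $B_0=B_0(T_0,\eta,K)$, whereas your polynomial-approximation sketch would need a Jackson-type estimate and a tail bound on $b$ (itself a consequence of the moment bounds) to be made fully quantitative. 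The paper also separately establishes absolute continuity of $(b_k)_*\Prr$ (Lemma~6.4) so that cubes are continuity sets in L\'evy's formula; your ``atomless standard Borel'' remark for the existence of the measure-preserving bijections serves the analogous purpose at the coupling step.
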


Our final ingredient is an application of the work of Nazarov and Sodin \cite{NaSo2}, who have comprehensively studied nodal domains of Gaussian fields. We do this in two steps, the details of which are given in Section~\ref{sec: Naz Sod}; firstly we apply their techniques to see that the perturbation $\Psi$ does not play any significant r\^{o}le.

\begin{prop}[$\Psi$ does not contribute]\label{prop: Psi negligible}
  Let $\epsilon_4>0$ and $\Phi$ be given, and suppose that $\Psi$ is a random function satisfying Proposition~\ref{prop: pass to random} (i) and (ii) for a sufficiently small $\epsilon_1$ (depending on $R,\epsilon_4$ and $\mu$). Then there exist $K_1(\mu),\delta_1(K,\mu)$ and $E_2(K,\delta)$ such that
  \begin{equation*}
    \E[\cN_{\Phi+\Psi}]=\E[\cN_{\Phi}]+O(\epsilon_4 R^2 + R)
  \end{equation*}
  for $K>K_1,\delta<\delta_1$ and $E>E_2$.
\end{prop}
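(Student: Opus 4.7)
The plan is to apply Nazarov and Sodin's stability machinery, which classifies nodal components of $\Phi$ as stable or unstable, shows that stable components persist under small $\mathcal{C}^1$ perturbations, and bounds the expected number of unstable components. First I would upgrade the $\mathcal{C}^1$ bound on $\Psi$ to a $\mathcal{C}^2$ bound: since both $\Phi$ and $\Psi$ satisfy $\Delta u = -4\pi^2 R^2 u$ on $(-1,1)^2$, elliptic regularity (or equivalently Bernstein's inequality for band-limited functions) on the interior box gives $\|\Psi\|_{\mathcal{C}^2} \leq C(R)\sqrt{2\epsilon_1}$. Hence by choosing $\epsilon_1=\epsilon_1(R,\epsilon_4,\mu)$ small, one may assume $\|\Psi\|_{\mathcal{C}^2}$ lies below any threshold that depends only on $R$ and $\mu$.

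Second, following Nazarov and Sodin, I would call a nodal component $D\subset[-1/2,1/2]^2$ of $\Phi$ \emph{stable at level $\eta$} if $D$ is compactly contained in $(-1/2,1/2)^2$, if $\max_{\bar D}|\Phi|\geq\eta$, and if $|\nabla\Phi|\geq\eta$ along $\partial D$. Using the $\mathcal{C}^2$ bound on $\Psi$, a standard implicit function theorem argument shows that if $\|\Psi\|_{\mathcal{C}^1}<\eta/2$ then the stable components of $\Phi$ are in bijection with nearby components of $\Phi+\Psi$, and no additional components of $\Phi+\Psi$ can appear inside $\{|\Phi|\geq\eta\}$. Consequently
\begin{equation*}
|\cN_{\Phi+\Psi}-\cN_{\Phi}| \;\leq\; 2\cdot\#\{\text{unstable components of }\Phi\} + \#\{\text{components meeting }\partial([-1/2,1/2]^2)\}.
\end{equation*}

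Third, I would bound each contribution in expectation. Components meeting $\partial([-1/2,1/2]^2)$ are confined to a tube of width $\sim 1/R$ of area $\sim 1/R$, and each has area $\gtrsim 1/R^2$, so there are at most $O(R)$ of them. For the interior unstable term, Nazarov and Sodin's Kac--Rice bound on near-critical points of a Gaussian field yields
\begin{equation*}
\E\bigl[\#\{\text{unstable interior components}\}\bigr] \;\leq\; C(\mu)\,\eta\, R^2
\end{equation*}
as $\eta\to 0$, provided the spectral measure $\mu_{\Phi}:=\sum_{k\in\K}\mu_{E,a}(I_k)\,\delta_{\zeta^{(k)}}$ of $\Phi$ stays uniformly non-degenerate. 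This approximation of $\mu$ by $\mu_{\Phi}$ is precisely the role of the thresholds $K_1(\mu)$, $\delta_1(K,\mu)$ and $E_2(K,\delta)$: $K>K_1$ forces the partition of $\Sone$ to be fine enough, $\delta<\delta_1$ ensures that $\K$ captures enough of $\mu$, and $E>E_2$ ensures that $|\cE^{(k)}|\to\infty$ so that the cells carry appreciable mass. Taking $\eta=\epsilon_4/C(\mu)$ and then $\epsilon_1$ correspondingly small gives the claimed bound.

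The main obstacle is establishing the Kac--Rice estimate on unstable interior components \emph{uniformly} in the spectral measure $\mu_{\Phi}$. Concretely, one must show that the joint Gaussian density of $(\Phi(0),\nabla\Phi(0))$ stays bounded as $E\to\infty$, which requires $\mu_{\Phi}$ to avoid degenerate configurations (such as support on too few points in a special position). The thresholds $K_1,\delta_1,E_2$ are precisely what is needed to guarantee this uniform non-degeneracy; once it is in hand, the remainder is a routine Kac--Rice computation modelled on the corresponding step in Nazarov and Sodin's analysis of nodal domains of translation-invariant Gaussian fields.
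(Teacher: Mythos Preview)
Your stable/unstable decomposition is a reasonable Nazarov--Sodin strategy, but it differs from the paper's argument and contains a gap. The paper does not classify individual components; it works on the global event
\[
\F_\tau=\Big\{\min_{y\in[-1,1]^2}\max\{|\Phi(y)|,|\nabla\Phi(y)|\}>\tau\Big\},
\]
shows $\Pro[\F_\tau^c]\le\epsilon_4$ for $\tau$ small (via a Nazarov--Sodin lemma bounding the probability that a non-degenerate Gaussian field has a near-critical near-zero), and on $\F_\tau$ applies the component stability lemma twice---once to $(\Phi,\Psi)$ and once to $(\Phi+\Psi,-\Psi)$, the latter using that on $\F_\tau$ one also has $\min\max\{|\Phi+\Psi|,|\nabla(\Phi+\Psi)|\}>\tau/2$---to reduce $|\cN_{\Phi+\Psi}-\cN_\Phi|$ to a count of components meeting a boundary strip of width $1/R$, which is $O(R)$. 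On $\F_\tau^c$ Courant's deterministic $O(R^2)$ bound contributes $O(\epsilon_4 R^2)$. No Kac--Rice estimate on unstable components enters, and the $\mathcal C^2$ upgrade is unnecessary.

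The gap in your argument is the degenerate case. Your Kac--Rice bound $\E[\#\{\text{unstable}\}]\le C(\mu)\,\eta R^2$ requires the covariance matrix $\Sigma$ of $\nabla\Phi(0)$ to be non-singular, and you assert that the thresholds $K_1,\delta_1,E_2$ guarantee this. But if the limit measure $\mu$ is supported on a single antipodal pair $\{\zeta,-\zeta\}$, then for large $E$ one has $|\K^+|=1$, the approximating spectral measure $\mu_\Phi$ is itself a pair of antipodal atoms, and $\det\Sigma=0$ identically---no choice of $K,\delta,E$ repairs this. The paper treats this case separately by writing $\Phi(y)=2\mu_{k_0}^{1/2}X\cos(2\pi\langle R\zeta^{(k_0)},y\rangle+\alpha)$ and computing $\min_y\max\{|\Phi|,|\nabla\Phi|\}\asymp X$ directly, whence $\Pro[\F_\tau^c]\to0$ as $\tau\to0$. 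A secondary issue: your displayed inequality $|\cN_{\Phi+\Psi}-\cN_\Phi|\le 2\cdot\#\{\text{unstable}\}+\#\{\text{boundary}\}$ is not justified as stated, since new components of $\Phi+\Psi$ lying entirely inside $\{|\Phi|<\eta\}$ are not a priori controlled by the number of unstable components of $\Phi$; the paper's two-way application of the stability lemma on the global event $\F_\tau$ sidesteps this difficulty.
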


Finally we apply Nazarov and Sodin's results to compute (asymptotically) the expected number of nodal domains of $\Phi$.

\begin{prop}\label{prop: NS constant}
  Given $\epsilon_5>0$ there exists $K_2(\epsilon_5)$ and $E_3(\epsilon_5)$ such that for all $K\geq K_2$ and $E\geq E_3$
  \begin{equation*}
    \E[\cN_\Phi]= c_{NS}(\mu)R^2 +O(\epsilon_5 R^2 + R)
  \end{equation*}
  provided that $\delta\leq K^{-2}$.
\end{prop}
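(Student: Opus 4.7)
The plan is to rescale $\Phi$ so that its spectral measure sits on the unit circle, identify it as a discretization of $\mu$, and then invoke the Nazarov--Sodin asymptotic together with the continuity of $c_{NS}$. Setting $\tilde\Phi_\omega(z)=\Phi_\omega(z/R)$, a direct computation (using $c_k=\bar c_{k+K}$, $\zeta^{(k-K)}=-\zeta^{(k)}$, and $\mu_{E,a}(I_k)=\mu_{E,a}(I_{k-K})$, the latter a consequence of $a_{-\xi}=\overline{a_\xi}$) shows that $\tilde\Phi$ is a real-valued stationary Gaussian field on $\R^{2}$ with spectral measure
\begin{equation*}
  \nu=\sum_{k\in\K}\mu_{E,a}(I_k)\,\delta_{\zeta^{(k)}}
\end{equation*}
on $\Sone$ and total mass $|\nu|=\mu_{E,a}(\bigcup_{k\in\K}I_k)\leq 1$. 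Because multiplication by a non-zero scalar leaves the nodal set invariant, $\tilde\Phi/|\nu|^{1/2}$ has the same nodal domains as $\tilde\Phi$ and spectral probability measure $\bar\nu=\nu/|\nu|$; moreover the nodal domains of $\Phi$ inside $(-\tfrac12,\tfrac12)^{2}$ correspond exactly to those of $\tilde\Phi$ inside the square $Q_R=(-R/2,R/2)^{2}$.

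I would then apply the square version of the Nazarov--Sodin theorem \cite{NaSo2} to $\tilde\Phi/|\nu|^{1/2}$, obtaining
\begin{equation*}
  \E[\cN_{\Phi}]=c_{NS}(\bar\nu)\,R^{2}+o(R^{2}),\qquad R\to\infty,
\end{equation*}
with the $O(R)$ term accounting for domains that meet $\partial Q_R$; this is the square analogue of \eqref{eq:Ngmu(R)<->c piR^2} and follows by the same Lipschitz-exhaustion argument. To conclude I would show $\bar\nu\Rightarrow\mu$ as $E\to\infty$ and $K\to\infty$, so that continuity of $c_{NS}$ on $\mathcal P(\Sone)$ yields $c_{NS}(\bar\nu)\to c_{NS}(\mu)$ and the remaining discrepancy is absorbed into $\epsilon_5$. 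For the weak convergence, for any Lipschitz $F:\Sone\to\R$ one has the discretization bound
\begin{equation*}
  \Abs{\int F\,d\nu-\int_{\bigcup_{k\in\K}I_k}F\,d\mu_{E,a}}\leq \frac{\|F\|_{\mathrm{Lip}}}{2K},
\end{equation*}
the discarded arcs $\bigcup_{k\notin\K}I_k$ carry $\mu_{E,a}$-mass at most $2K\delta\leq 2/K$ by the hypothesis $\delta\leq K^{-2}$, and the hypothesis $\mu_{E,a}\Rightarrow\mu$ of Theorem \ref{thm: main thm mu E cgt} handles the remaining piece.

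The main obstacle is that $\bar\nu$ varies with $E$ and $K$, so the Nazarov--Sodin asymptotic (stated for a fixed spectral measure) does not automatically deliver the uniform error shape $O(\epsilon_5 R^{2}+R)$ asked for. The natural remedy is a compactness-and-contradiction argument: were the proposition to fail along a sequence $(E_j,K_j)$, one could pass to a weakly convergent sub-sequence of $\bar\nu$, reduce to the pointwise Nazarov--Sodin statement at the limit measure in $\mathcal P(\Sone)$, and use continuity of $c_{NS}$ together with the discretization estimate above to derive a contradiction.
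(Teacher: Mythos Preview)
Your overall strategy matches the paper's: rescale so that the spectral measure lives on $\Sone$, recognise the resulting discretised probability measure $\bar\nu$ (the paper calls it $\mu_K$), apply Nazarov--Sodin, and then use continuity of $c_{NS}$ together with $\bar\nu\Rightarrow\mu$. Your Lipschitz test-function bound is equivalent to the paper's Prokhorov estimate $d(\mu_K,\mu_{E,a})\le C/K$, and uniform continuity of $c_{NS}$ on the compact space $\mathcal P(\Sone)$ is what converts this into $|c_{NS}(\bar\nu)-c_{NS}(\mu)|<\epsilon_5$.

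Where your proposal diverges is in securing the $O(R)$ term. The paper does \emph{not} rest on the bare \cite{NaSo2}*{Theorem 1}, which only gives $o(R^{2})$ with a rate depending on the spectral measure; it invokes the sharper \cite{KW}*{Proposition 3.5}, which yields $\E[\cN_{\Phi}]=c_{NS}(\mu_K)R^{2}+O(R)$ with an \emph{absolute} implied constant. That single citation is exactly what resolves the uniformity obstacle you correctly isolate.

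Your compactness-and-contradiction substitute, as written, does not close the gap. Passing to a subsequence $\bar\nu_j\Rightarrow\mu^{*}$ and appealing to the Nazarov--Sodin statement \emph{at} $\mu^{*}$ controls $\E[\cN_{h_{\mu^{*}}}(R_j)]$, not $\E[\cN_{h_{\bar\nu_j}}(R_j)]$; to bridge the two you would need continuity of the finite-$R$ expected nodal count in the spectral measure, which is precisely the uniform convergence you are trying to circumvent. Continuity of the functional $c_{NS}$ is a statement about the $R\to\infty$ limits and cannot furnish this on its own. If you wish to avoid citing \cite{KW}, the workable alternative is to argue directly that for spectral probability measures supported on $\Sone$ the integral-geometric sandwich has boundary term $O(R)$ with an absolute constant (the expected nodal-length density being uniformly bounded there); that is essentially what \cite{KW}*{Proposition 3.5} records.
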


\begin{proof}[Proof of Theorem \ref{thm: main thm mu E cgt} assuming Lemma \ref{lem: localisation}
and Propositions \ref{prop: gathering}-\ref{prop: NS constant}]

Let $\epsilon>0$. Choose $R=\frac1\epsilon$ and $\epsilon_5=\epsilon$. Applying Proposition~\ref{prop: NS constant} we see that
\begin{equation*}
    \E[\cN_\Phi]= c_{NS}(\mu)R^2 +O(R)
\end{equation*}
for $K\geq K_2(\epsilon),E\geq E_3(\epsilon)$ and  $\delta\leq K^{-2}$. Choosing $\epsilon_4=\epsilon$ we have, by Proposition~\ref{prop: Psi negligible},
\begin{equation}\label{eq: put toget1}
    \E[\cN_{\Phi+\Psi}]= c_{NS}(\mu)R^2 +O(R)
\end{equation}
for $K\geq \max\{K_1(\mu),K_2\}, E>\max\{E_2(K,\delta)),E_3\},\delta<\min\{K^{-2},\delta_1(K,\mu)\}$, and for any random $\Psi$ satisfying Proposition~\ref{prop: pass to random} (i) and (ii) for a sufficiently small $\epsilon_1$ (depending on $\epsilon$ and $\mu$).

We fix this value of $\epsilon_1$ and choose $\epsilon_2=\epsilon$. From Proposition~\ref{prop: gathering} this choice yields $K_0(\epsilon,\mu)$ and we now fix $K=\max\{K_0,K_1,K_2\}$. Furthermore, with this fixed value of $K$, Proposition~\ref{prop: gathering} also yields $\delta_0(\epsilon,\mu)$ and we also fix $\delta<\min\{K^{-2},\delta_0,\delta_1\}$. Applying Proposition~\ref{prop: gathering} we have
\begin{equation}\label{eq: put toget2}
  \Ee[\cN_{F}]=\Ee[\cN_{\phi+\psi}] + O(R)
\end{equation}
for some $\psi$ satisfying Proposition~\ref{prop: gathering} (i) and (ii) and for all $E\geq \sqrt R$. We choose $\epsilon_3=\epsilon$ and apply Proposition~\ref{prop: pass to random} to see that there exist $B_0(\epsilon,\mu)$, $E_1(\epsilon,\gamma,g,\mu)$ and a random $\Psi$ satisfying Proposition~\ref{prop: pass to random} (i) and (ii) such that
\begin{equation}\label{eq: put toget3}
  \Ee[\cN_{\phi+\psi}]= \E[\cN_{\Phi+\Psi}] + O(R)
\end{equation}
provided that $E\geq E_1$ and $\cE$ satisfies $I(\gamma,B_0)$. Combining \eqref{eq: put toget1}, \eqref{eq: put toget2} and \eqref{eq: put toget3} we have
\begin{equation*}
  \Ee[\cN_{F}]=c_{NS}(\mu)R^2 + O(R)
\end{equation*}
for all $E\geq \max\{E_1,E_2,E_3,\sqrt R\}$, provided that $\cE$ satisfies $I(\gamma,B_0)$.

Since $B(E)\to\infty$ we may find $E_4(\epsilon)$ such that $B(E)\geq B_0$ for all $E\geq E_4$. By hypothesis, $\cE$ satisfies $I(\gamma,B_0)$ for all such $E$. Choosing $E_0 = \max\{E_1,E_2,E_3,E_4,\sqrt R\}$ we have
\begin{equation*}
  \Ee[\cN_{F}]=c_{NS}(\mu)R^2 + O(R)
\end{equation*}
for all $E\geq E_0$. Finally we apply Lemma~\ref{lem: localisation} to get
\begin{equation*}
  \cN_{f}=c_{NS}(\mu)E + O\left(\frac ER\right)= c_{NS}(\mu)E + O(\epsilon E)
\end{equation*}
by our choice of $R$. This completes the proof.
\end{proof}

\section{Proof of Lemma \ref{lem: localisation}: most components arise locally}\label{sec:Lemma local}

\begin{proof}
The proof of this lemma is essentially the same as that of the so-called `integral-geometric sandwich' (see \cite{NaSo2}*{Lemma 1}. Since we are using boxes rather than balls, we give a direct proof for the reader's convenience.
The results of Donnelly and Fefferman \cite{DF}*{Theorem 1.2} show that the length of
\begin{equation*}
\mathcal Z(f)=\{x\in\T^2:f(x)=0\}
\end{equation*}
is $O(\sqrt E)$. In particular, the number of nodal domains of diameter at least $\frac1{\epsilon\sqrt{E}}$ is $O(\epsilon E)$, for any $\epsilon>0$.

Now if $\pi_1$ (respectively $\pi_2$) is the projection from $\T^2$ onto the first (respectively second) coordinate and $\epsilon>0$, then we write $\mathcal D_\epsilon$ for the set of nodal domains, $D$, of $f$ satisfying $|\pi_1(D)|,|\pi_2(D)|<\epsilon$, where $|\pi_j(D)|$ refers to the length of the interval $\pi_j(D)$ as a subset of $\T$. We define, for $x\in\T^2$ and $D\in\mathcal D_\epsilon$
  \begin{equation*}
    \chi_D(x,\epsilon)=
    \begin{cases}
      1,&\textrm{ if }D\textrm{ is contained in the open box centred at }x\textrm{ of sidelength }\epsilon\\
      0,&\textrm{ otherwise}
    \end{cases}.
  \end{equation*}
  We clearly have
  \begin{equation*}
    \cN_f\left(x,\epsilon\right)=\sum_{D\in\mathcal D_\epsilon}\chi_D(x,\epsilon)
  \end{equation*}
  and, since
  \begin{equation*}
    \int_{\T^2}\chi_D(x,\epsilon)\,dx=(\epsilon-|\pi_1(D)|)(\epsilon-|\pi_2(D)|),
  \end{equation*}
  we see that
  \begin{equation*}
    \int_{\T^2}\cN_f\left(x,\epsilon\right)\,dx=\sum_{D\in\mathcal D_\epsilon}(\epsilon-|\pi_1(D)|)(\epsilon-|\pi_2(D)|).
  \end{equation*}
  Taking $\epsilon=\frac R{\sqrt{E}}$ we have
  \begin{equation*}
    \int_{\T^2}\cN_f\left(x,\frac R{\sqrt{E}}\right)\,dx = \sum \left(\frac R{\sqrt{E}}-|\pi_1(D)|\right) \left(\frac R{\sqrt{E}}-|\pi_2(D)|\right)
  \end{equation*}
  where the sum runs over all nodal domains, $D$, of $f$ satisfying $|\pi_1(D)|,|\pi_2(D)|<\frac R{\sqrt{E}}$. Letting $\cN_{\text{small}}$ be the number of such domains we have
  \begin{equation*}
    \int_{\T^2}\cN_f\left(x,\frac R{\sqrt{E}}\right)\,dx = \frac {R^2}E \cN_{\text{small}} - \frac R{\sqrt{E}} \sum\left( |\pi_1(D)|+|\pi_2(D)|\right) + \sum |\pi_1(D)||\pi_2(D)|.
  \end{equation*}
  Now
  \begin{equation*}
    \sum |\pi_1(D)||\pi_2(D)| \leq \frac R{\sqrt{E}} \sum |\pi_1(D)|
  \end{equation*}
  and
  \begin{equation*}
    \sum |\pi_j(D)| \lesssim |\mathcal Z(f)| \lesssim \sqrt E
  \end{equation*}
  for $j=1,2$.

  We write $\cN_{\text{big}}$ for the number of nodal domains with either $|\pi_1(D)| \geq \frac R{\sqrt{E}}$ or $|\pi_2(D)|\geq\frac R{\sqrt{E}}$, and note that $\cN_{\text{big}}\lesssim\frac ER$ since such domains have diameter at least $\frac R{\sqrt{E}}$. Thus
  \begin{equation*}
    \cN_{f} = \cN_{\text{small}}+\cN_{\text{big}} = \cN_{\text{small}} + O\left(\frac ER\right) = \frac E{R^2} \int_{\T^2}\cN_f\left(x,\frac R{\sqrt{E}}\right)\,dx + O\left(\frac ER\right).
  \end{equation*}
\end{proof}

\section{Proof of Proposition \ref{prop: gathering}: approximating $F_{x}$ with $\phi_{x}$}\label{sec:approx F by phi}

We show that, for ``most'' $x$, we can think of $F_x$ as a ``small'' perturbation of $\phi_x$ in the $\mathcal C^1$-norm. This is the content of the next two lemmas.

Here, and throughout, the notation $\|\widetilde{\psi}_{0,x}\|$ for a norm $\|\cdot\|$ means that we are fixing an $x$ and considering the norm of $\widetilde{\psi}_{0,x}$ as a function of $y$. Similar remarks apply to $\tilde{F}_x$, $\phi_x$, etc. Furthermore, we recall that we consider the norm over $y\in\left(-1,1\right)^2$, for example,
\begin{equation*}
  \|\widetilde{\psi}_{0,x}\|_{L^2} = \left(\int_{(-1,1)^2} \Abs{\widetilde{\psi}_{0,x}(y)}^2 \,dy \right)^{1/2}.
\end{equation*}

\begin{lemma}[Frequencies outside the support of the limiting measure don't contribute]\label{lem: outside support negligible}
  Let $R,K$ and $\delta$ be as before. Then
  \begin{equation*}
    \int_{\T^2}\|\widetilde{\psi}_{0,x}\|_{\mathcal C^1}^2\,dx =O(R^{6}\delta K).
  \end{equation*}
\end{lemma}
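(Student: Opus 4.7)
The plan is to upgrade the pointwise $\mathcal{C}^1$ control to $L^2$ control of sufficiently many $y$-derivatives of $\widetilde{\psi}_{0,x}$ via Sobolev embedding, and then compute the $x$-average using orthogonality of the characters $\{e(\langle\xi,\cdot\rangle)\}$ on $\T^2$. The key observation is that the $L^2(\T^2)$-mass of $\widetilde{\psi}$ is small by construction --- only of order $K\delta$ --- because the frequencies in $\mathcal{G}$ are the ``light'' ones; and each $y$-derivative of the rescaled function $\widetilde{\psi}_{0,x}(y)=\widetilde{\psi}(x+\tfrac{R}{\sqrt{E}}y)$ contributes a factor of $R$ rather than $\sqrt{E}$, since the rescaled frequencies $R\xi/\sqrt{E}$ lie on the circle of radius $R$. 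Three derivatives (the minimum needed in dimension two to recover $\mathcal{C}^1$ control) will therefore produce precisely the factor $R^6$.

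Concretely, working on the slightly larger box $(-2,2)^2 \supset (-1,1)^2$, the standard Sobolev embedding $H^3((-2,2)^2)\hookrightarrow\mathcal{C}^1(\overline{(-1,1)^2})$ gives, uniformly in $x$,
\begin{equation*}
  \|\widetilde{\psi}_{0,x}\|_{\mathcal{C}^1((-1,1)^2)}^2 \;\lesssim\; \sum_{|\alpha|\leq 3}\|\partial^\alpha_y \widetilde{\psi}_{0,x}\|_{L^2((-2,2)^2)}^2.
\end{equation*}
Differentiating \eqref{def: psitil} and \eqref{def: psi0til} term-by-term,
\begin{equation*}
  \partial^\alpha_y \widetilde{\psi}_{0,x}(y) \;=\; \sum_{\xi\in\mathcal{G}} a_\xi \bigl(\tfrac{2\pi i R\xi}{\sqrt{E}}\bigr)^{\alpha} e(\langle \xi,x\rangle)\, e\!\left(\left\langle \tfrac{R\xi}{\sqrt{E}},y\right\rangle\right),
\end{equation*}
and since $|\xi|=\sqrt{E}$ the multi-index coefficient satisfies $|(2\pi iR\xi/\sqrt{E})^\alpha|\leq(2\pi R)^{|\alpha|}$. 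Squaring, integrating in $x\in\T^2$, and invoking orthogonality of the distinct characters $\{e(\langle\xi,\cdot\rangle)\}_{\xi\in\mathcal{G}}$ produces, uniformly in $y$,
\begin{equation*}
  \int_{\T^2}|\partial^\alpha_y \widetilde{\psi}_{0,x}(y)|^2\,dx \;\leq\; (2\pi R)^{2|\alpha|}\sum_{\xi\in\mathcal{G}}|a_\xi|^2.
\end{equation*}

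The coefficient sum is controlled by the defining property of $\mathcal{K}$: for each $k\notin\mathcal{K}$ we have $\mu_{E,a}(I_k)=\sum_{\xi\in\cE^{(k)}}|a_\xi|^2<\delta$, so summing over the at most $2K$ arcs gives $\sum_{\xi\in\mathcal{G}}|a_\xi|^2 \leq 2K\delta$. Integrating in $y\in(-2,2)^2$ by Fubini, each multi-index with $|\alpha|\leq 3$ therefore contributes $\lesssim R^{2|\alpha|}K\delta$; summing (and using $R>1$) yields $\int_{\T^2}\|\widetilde{\psi}_{0,x}\|_{H^3((-2,2)^2)}^2\,dx \lesssim R^6 K\delta$, and the Sobolev embedding closes the argument. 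The proof is essentially a Plancherel computation, and I do not expect any real obstacle beyond careful bookkeeping of the rescaling factor $R/\sqrt{E}$.
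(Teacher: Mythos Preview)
Your proposal is correct and follows essentially the same approach as the paper: Sobolev embedding $H^3\hookrightarrow\mathcal{C}^1$, term-by-term differentiation, Fubini plus orthogonality of the characters $e(\langle\xi,\cdot\rangle)$ on $\T^2$, and the bound $\sum_{\xi\in\mathcal{G}}|a_\xi|^2\leq 2K\delta$. The only cosmetic difference is that you work on the slightly enlarged box $(-2,2)^2$ for the Sobolev norm, whereas the paper computes directly on $[-1,1]^2$.
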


\begin{lemma}[We may replace $f_k$ by $b_k$; cf. \cite{B}*{Lemma 1}]\label{lem: F to phi}
  For fixed $R\geq1$ and $K$,
  \begin{equation*}
    \int_{\T^2}\|\tilde{F}_x-\phi_x\|_{\mathcal C^1}^2\,dx=O\left(\frac{R^{8}}{K^2}\right).
  \end{equation*}
\end{lemma}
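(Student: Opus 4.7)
The plan is to reduce the $\mathcal C^1$ bound to an $L^2$ bound via Sobolev embedding and then to exploit orthogonality of the characters $e(\langle\xi,\cdot\rangle)$ on $\T^2$. Writing
\begin{equation*}
\tilde F_x(y)-\phi_x(y)=\sum_{k\in\K}\sum_{\xi\in\cE^{(k)}}a_\xi\,e(\langle\xi,x\rangle)\,G_{\xi,k}(y)
\end{equation*}
with $G_{\xi,k}(y):=e(R\langle\xi/\sqrt E,y\rangle)-e(R\langle\zeta^{(k)},y\rangle)$, Parseval on $\T^2$ diagonalises the $x$-integral: for every multi-index $\alpha$,
\begin{equation*}
\int_{\T^2}|\partial^\alpha_y(\tilde F_x-\phi_x)(y)|^2\,dx=\sum_{k\in\K}\sum_{\xi\in\cE^{(k)}}|a_\xi|^2\,|\partial^\alpha_y G_{\xi,k}(y)|^2.
\end{equation*}

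The second step is a pointwise estimate on $\partial^\alpha_y G_{\xi,k}$. Since $\xi/\sqrt E$ and $\zeta^{(k)}$ lie in the same arc $I_k$ on the unit circle, $|\xi/\sqrt E-\zeta^{(k)}|\lesssim 1/K$ and both have modulus one. Differentiating,
\begin{equation*}
\partial^\alpha_y G_{\xi,k}(y)=(2\pi i R)^{|\alpha|}\left[(\xi/\sqrt E)^\alpha\, e(R\langle\xi/\sqrt E,y\rangle)-(\zeta^{(k)})^\alpha\, e(R\langle\zeta^{(k)},y\rangle)\right],
\end{equation*}
and writing the bracket as $A(u-v)+(A-B)v$ with $|A|,|B|,|v|\le 1$, $|u-v|\lesssim R|y|/K$, and (by the mean value theorem applied to $\eta\mapsto\eta^\alpha$ on the closed unit disc) $|A-B|\lesssim |\alpha|/K$, one finds for $y\in(-1,1)^2$ and $|\alpha|\le 3$ that $|\partial^\alpha_y G_{\xi,k}(y)|\lesssim R^{|\alpha|+1}/K$. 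Inserting this into the Parseval identity above and using $\sum_\xi|a_\xi|^2\le 1$ yields $\int_{\T^2}|\partial^\alpha_y(\tilde F_x-\phi_x)(y)|^2\,dx\lesssim R^{2|\alpha|+2}/K^2$ uniformly in $y\in(-1,1)^2$.

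To finish I would integrate in $y\in(-1,1)^2$, sum over $|\alpha|\le 3$, and apply Fubini, obtaining
\begin{equation*}
\int_{\T^2}\|\tilde F_x-\phi_x\|_{W^{3,2}((-1,1)^2)}^2\,dx\lesssim R^8/K^2,
\end{equation*}
the dominant contribution coming from $|\alpha|=3$. The two-dimensional Sobolev embedding $W^{3,2}(\Omega)\hookrightarrow \mathcal C^1(\overline\Omega)$ on the Lipschitz square $\Omega=(-1,1)^2$ then upgrades this to the desired $\int_{\T^2}\|\tilde F_x-\phi_x\|_{\mathcal C^1}^2\,dx\lesssim R^8/K^2$.

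The one place where care is required is the pointwise derivative estimate for $G_{\xi,k}$: the constants must be kept independent of $R$ and $K$ (the $|\alpha|$-dependence enters only through the mean value theorem), and the ``frequency error'' $|A-B|$ must be kept separate from the ``phase error'' $|u-v|$. A clean alternative to the final Sobolev step is to observe that $h:=\tilde F_x-\phi_x$ is itself a Helmholtz eigenfunction, $\Delta h=-4\pi^2R^2 h$, so that interior elliptic estimates give $\|h\|_{\mathcal C^1}\lesssim R^3\|h\|_{L^2}$ on a slightly larger box; combined with the $\alpha=0$ case of the $L^2$ bound above this produces the same $R^8/K^2$.
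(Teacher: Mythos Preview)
Your proof is correct and follows essentially the same route as the paper: Sobolev embedding $H^3\hookrightarrow\mathcal C^1$ on the square, Parseval in $x$ to diagonalise the sum over $\xi$, and the pointwise derivative estimate $|\partial^\alpha_y G_{\xi,k}|\lesssim R^{|\alpha|+1}/K$ obtained by separating the ``phase error'' $|u-v|\lesssim R/K$ from the ``frequency error'' $|A-B|\lesssim 1/K$. The paper merely orders the steps slightly differently (Sobolev first, then Fubini and Parseval) and writes out the same $A(u-v)+(A-B)v$ decomposition in coordinates; your alternative elliptic-regularity endgame is a nice remark but not needed.
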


We first show how Lemmas~\ref{lem: outside support negligible} and \ref{lem: F to phi} imply Proposition~\ref{prop: gathering}, we will then prove the lemmas.

\begin{proof}[Proof of Proposition \ref{prop: gathering}, assuming Lemmas \ref{lem: outside support negligible} and \ref{lem: F to phi}]
  By Lemma~\ref{lem: F to phi} we have
  \begin{equation*}
    \|\tilde{F}_x-\phi_x\|_{\mathcal C^1}<\frac{\epsilon_1}2
  \end{equation*}
  for all $x$ outside a set $V_1$ of measure at most $CR^8/(K^2\epsilon_1^2)<\epsilon_2$, for appropriately large $K$. Similarly, by Lemma~\ref{lem: outside support negligible} we have
  \begin{equation*}
    \|\widetilde{\psi}_{0,x}\|_{\mathcal C^1}<\frac{\epsilon_1}2
  \end{equation*}
  for all $x$ outside a set $V_2$ of measure at most
  \begin{equation*}
    \frac{CR^6\delta K}{\epsilon_1^2}<\epsilon_2,
  \end{equation*}
  for appropriately small $\delta$.

  Write $V=V_1\cup V_2$ and define
  \begin{equation*}
    \psi_x=
    \begin{cases}
      F_x-\phi_x&\text{for }x\in\T^2\setminus V\\
      0&\text{for }x\in V
    \end{cases}.
  \end{equation*}
  Since $F_x=\tilde{F}_x+\widetilde{\psi}_{0,x}$ we manifestly have $\|\psi_x\|_{\mathcal C^1}<\epsilon_1$ for all $x$. Furthermore, since $\Delta F_x=-4\pi^2 R^2 F_x$ and $\Delta \phi_x=-4\pi^2 R^2 \phi_x$ for all $x$, we have
  \begin{equation*}
    \Delta(\phi_x+\psi_x)=-4\pi^2R^2(\phi_x+\psi_x)
  \end{equation*}
  for all $x$.

  Trivially
  \begin{align*}
    \int_{\T^2}\cN_{F_x}\,dx&=\int_{\T^2\setminus V}\cN_{F_x}\,dx+\int_{V}\cN_{F_x}\,dx\\
    &=\int_{\T^2\setminus V}\cN_{\phi_x+\psi_x}\,dx+\int_{V}\cN_{F_x}\,dx\\
    &=\int_{\T^2}\cN_{\phi_x+\psi_x}\,dx-\int_{V}\cN_{\phi_x+\psi_x}\,dx+\int_{V}\cN_{F_x}\,dx.
  \end{align*}
  By Courant's Nodal Theorem $\cN_{F_x}=O(R^2)$ and $\cN_{\phi_x+\psi_x}=O(R^2)$ which yields
  \begin{equation*}
    \int_{\T^2}\cN_{F_x}\,dx=\int_{\T^2}\cN_{\phi_x+\psi_x}\,dx+O(\mes(V) R^2)=\int_{\T^2}\cN_{\phi_x+\psi_x}\,dx+O(\epsilon_2 R^2),
  \end{equation*}
  as claimed.
\end{proof}

\begin{proof}[Proof of Lemma~\ref{lem: outside support negligible}]
  By standard Sobolev estimates we have
  \begin{equation*}
    \|\widetilde{\psi}_{0,x}\|_{\mathcal C^1}^2\leq C\|\widetilde{\psi}_{0,x}\|_{H^{3}}^2=C\sum_{|\alpha|\leq 3}\|D^\alpha\widetilde{\psi}_{0,x}\|_{L^{2}}^2,
  \end{equation*}
  where the sum runs over multi-indices $\alpha=(\alpha_1,\alpha_2)$ and $D^\alpha=\frac{\partial^{|\alpha|}}{\partial y_1^{\alpha_1}\partial y_2^{\alpha_2}}$. From \eqref{def: psitil} and \eqref{def: psi0til} it is clear that
  \begin{equation*}
    D^\alpha\widetilde{\psi}_{0,x}(y)=\sum_{\xi\in\mathcal G} a_\xi e(\langle \xi,x\rangle) D^\alpha e \left( \left\langle \xi,\frac R{\sqrt{E}}y \right\rangle \right).
  \end{equation*}
  Combining these and applying Fubini yields
  \begin{align}\label{eq:long expr lem osn}
    \int_{\T^2}\|\widetilde{\psi}_{0,x}\|_{\mathcal C^3}^2\,dx\leq C\sum_{|\alpha|\leq 3} \sum_{\xi,\xi'\in\mathcal G} a_\xi \bar{a}_{\xi'}& \int_{\T^2} e(\langle \xi-\xi',x\rangle) \,dx\notag\\
    &\quad  \int_{[-1,1]^2} D^\alpha e\left(\left\langle \xi,\frac R{\sqrt{E}}y\right\rangle\right) \overline{D^\alpha e\left(\left\langle \xi',\frac R{\sqrt{E}}y\right\rangle\right)}\,dy.
  \end{align}
  We compute that
  \begin{equation*}
    \int_{\T^2} e(\langle \xi-\xi',x\rangle) \,dx=\delta_{\xi\xi'},
  \end{equation*}
  where $\delta_{\xi\xi'}$ is the Kronecker delta, and
  \begin{equation*}
    D^\alpha e\left(\left\langle \xi,\frac R{\sqrt{E}}y\right\rangle\right)= \left(2\pi i \frac R{\sqrt{E}}\right)^{|\alpha|}\xi_1^{\alpha_1}\xi_2^{\alpha_2} e\left(\left\langle \xi,\frac R{\sqrt{E}}y\right\rangle\right).
  \end{equation*}
  Substituting these into \eqref{eq:long expr lem osn} yields
  \begin{equation*}
    \int_{\T^2}\|\widetilde{\psi}_{0,x}\|_{\mathcal C^1}^2\,dx\leq C\sum_{|\alpha|\leq 3} \sum_{\xi\in\mathcal G} |a_\xi|^2 \left(2\pi \frac R{\sqrt{E}}\right)^{2|\alpha|}\xi_1^{2\alpha_1}\xi_2^{2\alpha_2}\leq C R^{6}\sum_{\xi\in\mathcal G} |a_\xi|^2,
  \end{equation*}
  and it is clear that
  \begin{equation*}
    \sum_{\xi\in\mathcal G} |a_\xi|^2=\mu_{E,a}(\cup_{k\notin\K}I_k)=\sum_{k\notin\K}\mu_{E,a}(I_k).
  \end{equation*}

  By definition, $\mu_{E,a}(I_k)\leq\delta$ for $k\notin\K$, and there cannot be more than $2K$ summands $k\notin\K$. This completes the proof.
\end{proof}

\begin{proof}[Proof of Lemma~\ref{lem: F to phi}]
  As before we estimate the $\mathcal C^1$-norm by the $H^{3}$-norm. We have
  \begin{equation*}
    \int_{\T^2}\|\tilde{F}_x-\phi_x\|_{\mathcal C^1}^2\,dx\leq C\sum_{|\alpha|\leq 3} \int_{\T^2}\|D^\alpha(\tilde{F}_x-\phi_x)\|_{L^{2}}^2\,dx
  \end{equation*}
  and clearly by \eqref{eq: Ftil simple expansion}
  \begin{equation*}
    D^\alpha\tilde{F}_x= \sum_{k\in\K}\sum_{\xi\in\cE^{(k)}} a_\xi e(\langle\xi,x\rangle) D^\alpha e\left(\left\langle\xi,\frac R{\sqrt{E}}y\right\rangle\right)
  \end{equation*}
  and by \eqref{eq: def bk} and \eqref{def: phi}
  \begin{equation*}
    D^\alpha\phi_x= \sum_{k\in\K}\sum_{\xi\in\cE^{(k)}} a_\xi e(\langle\xi,x\rangle) D^\alpha e\left(\left\langle R\zeta^{(k)},y\right\rangle\right).
  \end{equation*}
  To simplify the notation we write $D^\alpha(\xi,\zeta^{(k)})= D^\alpha \left( e\left(\left\langle R\frac\xi{\sqrt{E}},y\right\rangle\right) - e\left(\left\langle R\zeta^{(k)},y\right\rangle\right)\right)$ and then have
  \begin{equation*}
    |D^\alpha(\tilde{F}_x-\phi_x)|^2= \sum_{k,k'\in\K}\sum_{\xi\in\cE^{(k)}}\sum_{\xi'\in\cE^{(k')}} a_\xi\bar{a}_{\xi'} e(\langle\xi-\xi',x\rangle) D^\alpha (\xi,\zeta^{(k)}) \overline{D^\alpha (\xi',\zeta^{(k')})}
  \end{equation*}
  which yields
  \begin{align*}
    \int_{\T^2}\int_{[-1,1]^2} &|D^\alpha(\tilde{F}_x-\phi_x)|^2\,dy\,dx\\
    &= \sum_{k,k'\in\K}\sum_{\xi\in\cE^{(k)}}\sum_{\xi'\in\cE^{(k')}} a_\xi\bar{a}_{\xi'} \int_{\T^2}e(\langle\xi-\xi',x\rangle) \,dx \int_{[-1,1]^2} D^\alpha (\xi,\zeta^{(k)}) \overline{D^\alpha (\xi',\zeta^{(k')})}\,dy.
  \end{align*}
  As before we compute
  \begin{equation*}
    \int_{\T^2} e(\langle \xi-\xi',x\rangle) \,dx=\delta_{\xi\xi'}
  \end{equation*}
  so that
  \begin{equation*}
    \int_{\T^2}\int_{[-1,1]^2} |D^\alpha(\tilde{F}_x-\phi_x)|^2\,dy\,dx= \sum_{k\in\K}\sum_{\xi\in\cE^{(k)}} |a_\xi|^2\int_{[-1,1]^2} |D^\alpha (\xi,\zeta^{(k)})|^2\,dy.
  \end{equation*}
  Moreover
  \begin{align*}
    D^\alpha(\xi,\zeta^{(k)})= (&2\pi i R)^{|\alpha|}\\
    & \left( \left(\frac{\xi_1}{\sqrt{E}}\right)^{\alpha_1} \left(\frac{\xi_2}{\sqrt{E}}\right)^{\alpha_2} e\left(\left\langle R\frac\xi{\sqrt{E}},y\right\rangle\right) - \left(\zeta_1^{(k)}\right)^{\alpha_1} \left(\zeta_2^{(k)}\right)^{\alpha_2} e\left(\left\langle R\zeta^{(k)},y\right\rangle\right) \right)
  \end{align*}
  from which we deduce, recalling that $|\zeta^{(k)}|=1$, that
  \begin{align*}
    |D^\alpha(\xi,&\zeta^{(k)})|=\\
    &(2\pi R)^{|\alpha|} \Abs{ \left(\frac{\xi_1}{\sqrt{E}}\right)^{\alpha_1} \left(\frac{\xi_2}{\sqrt{E}}\right)^{\alpha_2} - \left(\zeta_1^{(k)}\right)^{\alpha_1} \left(\zeta_2^{(k)}\right)^{\alpha_2} e\left(\left\langle R\left(\zeta^{(k)}-\frac\xi{\sqrt{E}}\right),y\right\rangle\right) }\\
    &\leq (2\pi R)^{|\alpha|} \Bigg( \Abs{\left(\frac{\xi_1}{\sqrt{E}}\right)^{\alpha_1} \left(\frac{\xi_2}{\sqrt{E}}\right)^{\alpha_2} - \left(\zeta_1^{(k)}\right)^{\alpha_1} \left(\zeta_2^{(k)}\right)^{\alpha_2}}\\
    &\qquad \qquad\qquad\qquad\qquad\qquad  +\Abs{\zeta_1^{(k)}}^{\alpha_1}\Abs{\zeta_2^{(k)}}^{\alpha_2}\Abs{ 1- e\left(\left\langle R\left(\zeta^{(k)}-\frac\xi{\sqrt{E}}\right),y\right\rangle\right) } \Bigg)\\
    &\leq CR^{|\alpha|}\left(\Abs{\zeta^{(k)}-\frac\xi{\sqrt{E}}}+R\Abs{\zeta^{(k)}-\frac\xi{\sqrt{E}}}\right) \leq C\frac{R^{|\alpha|+1}}{K}.
  \end{align*}
  We therefore have
  \begin{align}\label{eq: estimate to sum}
    \int_{\T^2}\int_{(-\frac12,\frac12)^2} |D^\alpha(\tilde{F}_x-\phi_x)|^2\,dy\,dx &\leq C\frac{R^{2|\alpha|+2}}{K^2}\sum_{k\in\K}\sum_{\xi\in\cE^{(k)}} |a_\xi|^2\notag\\
    &=C\frac{R^{2|\alpha|+2}}{K^2}\mu_{E,a}(\cup_{k\in\K}I_k) \leq C\frac{R^{2|\alpha|+2}}{K^2}.
  \end{align}
  Summing \eqref{eq: estimate to sum} over the indices $\alpha$ with $|\alpha|\leq 3$ yields the claimed result.
\end{proof}

\section{Proof of Proposition \ref{prop: pass to random}: Bourgain's de-randomisation}\label{sec:derandom}

\subsection{Bourgain's de-randomisation technique I, approximately Gaussian coefficients}

Our next step is to relate the number of nodal of domains of $\phi_x+\psi_x$ to the nodal domains of a random function. We will do this by seeing that we may treat the quantities $b_k(x)$ (recall \eqref{eq: def bk}) as being ``approximately Gaussian'' when we think of $\T^2$ as a probability space. Recall that we work on two different probability spaces, an abstract one $\Omega$, and $\T^2$, that we refer to measurable maps from $\Omega$ as random, and from $\T^2$ as quasi-random, that $\Pro$ and $\E$ refer to the abstract probability space $\Omega$, while $\Prr$ and $\Ee$ refer to $\T^2$ ($\Prr$ is of course just Lebesgue measure, normalised appropriately) and that in this notation
\begin{equation*}
  \int_{\T^2}\cN_{\phi_x+\psi_x}\,dx=\Ee[\cN_{\phi+\psi}].
\end{equation*}
We will always assume that $\omega\in\Omega$ and $x\in\T^2$.

In this subsection we show the following proposition, which states that, given a sequence $\{c_k\}_{k\in \K^+}$ of iid $N_\C(0,1)$ random variables (that is, the probability density of each $c_k$ is $\frac{1}{\pi}e^{-|z|^2}$ with respect to the Lebesgue measure on the plane), a bounded number of moments of the sequence of $\C$-valued quasi-random variables $\{b_k\}_{k\in \K^+}$ (recall \eqref{eq: def bk}) are ``close'' to the moments of the sequence $\{c_k\}_{k\in \K^+}$.

\begin{prop}[The moments of $b_k$ are approximately Gaussian]\label{prop:eps  Gauss mom}
  Let $\delta,B_1$ and $\epsilon_6>0$ be given. Suppose that $N\to\infty$ as $E\to\infty$, that $\cE$ satisfies $I(\gamma,B_1)$ for all $E$ and for some $\gamma<\frac12$, and that the coefficients $a_\xi$ are of class $\cA(g)$, for some slowly growing function $g$. There exists $E_5=E_5(\epsilon_6,\delta,B_1,\gamma,g)$ such that if $r_k\geq0$ and $s_k\geq0$ satisfy $\sum_{k\in\K^+} r_k+s_k < B_1$ then for all $E\geq E_5$
  \begin{equation*}
    \left|\E\left[\prod_{k\in\K^+} c_k^{r_k}\bar{c}_k^{s_k}\right]-\Ee\left[\prod_{k\in\K^+} b_k^{r_k}\bar{b}_k^{s_k}\right]\right|\leq \epsilon_6.
  \end{equation*}
\end{prop}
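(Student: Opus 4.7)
The plan is to compare the two moments by direct computation. The Wick identity for i.i.d.\ standard complex Gaussians gives
\begin{equation*}
\E\Bigl[\prod_{k\in\K^+} c_k^{r_k}\bar c_k^{s_k}\Bigr]=\prod_{k\in\K^+}\delta_{r_k,s_k}\,r_k!,
\end{equation*}
so the target is $\prod_k r_k!$ when $r_k=s_k$ for every $k$ and $0$ otherwise. Using \eqref{eq: def bk} and the orthogonality relation $\int_{\T^2}e(\langle v,x\rangle)\,dx=\mathbf{1}_{\{v=0\}}$, one expands
\begin{equation*}
\Ee\Bigl[\prod_{k\in\K^+}b_k^{r_k}\bar b_k^{s_k}\Bigr]=\prod_k\mu_{E,a}(I_k)^{-(r_k+s_k)/2}\!\!\!\sum_{\substack{\xi^{(k)}_i,\eta^{(k)}_j\in\cE^{(k)}\\ \sum_{k,i}\xi^{(k)}_i=\sum_{k,j}\eta^{(k)}_j}}\!\!\!\prod_{k,i}a_{\xi^{(k)}_i}\prod_{k,j}\overline{a_{\eta^{(k)}_j}},
\end{equation*}
so the task reduces to estimating this sum.

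I would then split into a \emph{diagonal} and an \emph{off-diagonal} contribution. Using $-\cE=\cE$ I would rewrite the constraint as a single vanishing sum $\sum_{m=1}^{R+S}\zeta_m=0$ with $\zeta_m\in\cE$, where the $\xi$-positions lie in positive arcs $\cE^{(k)}$ and the $-\eta$-positions in the opposite arcs $\cE^{(K-k)}$. A length-$2$ minimal vanishing subset $\{\zeta,-\zeta\}$ is then forced to be a $\xi$-$\eta$ pair with $\xi=\eta$ in the same arc (it cannot pair two positive-arc terms, nor two negative-arc terms). The diagonal, consisting of configurations whose minimal vanishing decomposition is entirely of length-$2$ pairs, is empty when some $r_k\ne s_k$ (matching the Gaussian answer); when $r_k=s_k$ for all $k$, pairing each $\xi^{(k)}_i$ with some $\eta^{(k)}_{\sigma(i)}$ and separating off coincidences among the $\xi^{(k)}_i$ shows that the diagonal equals $\prod_k r_k!\,\mu_{E,a}(I_k)^{r_k}\bigl(1+O_{B_1}(M/\delta)\bigr)$. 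After dividing by the normalization this gives $\prod_k r_k!$ plus an error that tends to zero, since $M\le g(N)/N$, $\mu_{E,a}(I_k)\ge\delta$ for $k\in\K$, and $g$ is slowly growing.

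The substantive step is to bound the off-diagonal, where every decomposition contains a minimal vanishing subset of some length $l\in[3,B_1]$. I would fix such a subset and estimate:
\begin{enumerate}
\item The number of length-$l$ minimal vanishing subsets in $\cE$ is at most $N^{\gamma l}$ by $I(\gamma,B_1)$.
\item The product of the $l$ coefficients inside it is at most $M^{l/2}\le(g(N)/N)^{l/2}$.
\item The remaining $R+S-l$ positions form another vanishing sum; decomposing them recursively, each trivial pair contributes $\sum_{\zeta\in\cE^{(k)}}|a_\zeta|^2=\mu_{E,a}(I_k)$, which exactly cancels two corresponding factors of $\mu_{E,a}(I_k)^{-1/2}$ in the normalization, and any further length-$\ge 3$ minimal subset only adds more decay.
\end{enumerate}
The residual normalization associated with the length-$l$ subset is at most $\delta^{-l/2}$, so the off-diagonal contribution is bounded by
\begin{equation*}
C_{B_1}\sum_{l=3}^{B_1}\delta^{-l/2}\,g(N)^{l/2}\,N^{(\gamma-1/2)l}=o(1),
\end{equation*}
since $\gamma<\tfrac12$ and $g$ is slowly growing. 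Choosing $E_5$ large enough so that both the diagonal and off-diagonal errors are $<\epsilon_6/2$ completes the proof.

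The main obstacle is the combinatorial bookkeeping underlying the off-diagonal bound: showing rigorously that every off-diagonal configuration admits a decomposition into trivial $\xi$-$\eta$ same-arc pairs plus at least one minimal vanishing subset of length $\ge 3$, handling the non-uniqueness of such a decomposition by a canonical choice, and verifying that the factors $\mu_{E,a}(I_k)$ produced by the trivial pairs telescope correctly against $\prod_k\mu_{E,a}(I_k)^{-(r_k+s_k)/2}$, so that only the mild residual factor $\mu^{-l/2}\le\delta^{-l/2}$ attached to the genuinely minimal subset remains to be absorbed.
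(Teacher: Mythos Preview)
Your overall strategy matches the paper's: compute the Gaussian side via Wick, expand the $b_k$-moment using orthogonality on $\T^2$, and split the resulting vanishing-sum into a diagonal and an off-diagonal contribution. Your treatment of the diagonal (pairing $\xi$'s with $\eta$'s within each arc, separating off coincidences, and obtaining $\prod_k r_k!\,\mu_{E,a}(I_k)^{r_k}(1+O_{B_1}(M/\delta))$) is exactly what the paper does in its Claim~\ref{cl: first claim}.

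Where you diverge is in the off-diagonal estimate. You aim for a refined recursive bound: isolate one minimal vanishing subset of length $l\ge 3$, bound it by $N^{\gamma l}M^{l/2}\delta^{-l/2}$, and let the remaining trivial pairs cancel exactly against their normalization factors, recursing if further length-$\ge 3$ subsets appear. This yields the sharper bound $C_{B_1}\sum_{l\ge 3}\delta^{-l/2}g(N)^{l/2}N^{(\gamma-1/2)l}$, but at the cost of the bookkeeping you flag as the main obstacle (the decomposition is not unique, the remaining exponents $r'_k,s'_k$ depend on which positions were selected, and one effectively needs an induction on the total degree to close the argument). The paper instead takes a much cruder route in its Claim~\ref{cl: second claim}: bound \emph{every} coefficient by $M^{1/2}$, so the whole off-diagonal sum is at most $J\,M^{(R+S)/2}$, and then bound the number $J$ of off-diagonal vanishing relations by grouping according to the number $\nu$ of length-$2$ pairs and invoking $I(\gamma,B_1)$ on the rest, giving $J\le C(B_1)N^{(R+S)/2}N^{3(\gamma-1/2)}$. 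Combined with the crude normalization bound $\prod_k\mu_{E,a}(I_k)^{-(r_k+s_k)/2}\le\delta^{-(R+S)/2}$, this gives $C(B_1)\delta^{-B_1/2}g(N)^{B_1/2}N^{3(\gamma-1/2)}$. The paper's bound is worse in its $\delta$- and $g(N)$-dependence but requires no recursion and no cancellation of trivial-pair factors against the normalization; since all that is needed is $o(1)$ as $E\to\infty$ for fixed $\delta,B_1$, this cruder route suffices and sidesteps the combinatorics you anticipate.
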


\begin{proof}
  A routine calculation shows that
  \begin{equation}\label{eq: Gauss mom}
    \E\left[\prod_{k\in\K^+} c_k^{r_k}\bar{c}_k^{s_k}\right]=\prod_{k\in\K^+} \E\left[ c_k^{r_k}\bar{c}_k^{s_k}\right]=\prod_{k\in\K^+} \delta_{r_k,s_k}r_k!,
  \end{equation}
  we compute here the moments of $b_k$. We begin by noting that
  \begin{align}\label{eq: bk mom}
    \Ee\left[\prod_{k\in\K^+} b_k^{r_k}\bar{b}_k^{s_k}\right]=\prod_{k\in\K^+}& \mu_{E,a}(I_k)^{-\frac{r_k+s_k}{2}}\notag \\
    &\Ee\left[\prod_{k\in\K^+} \bigg(\sum_{\xi\in\cE^{(k)}} a_\xi e(\langle \xi,x\rangle) \bigg)^{r_k} \bigg(\sum_{\xi\in\cE^{(k)}} \bar{a}_\xi e(\langle -\xi,x\rangle) \bigg)^{s_k} \right]
  \end{align}
  and we compute that
  \begin{align*}
    \Ee&\Bigg[\prod_{k\in\K^+} \bigg(\sum_{\xi\in\cE^{(k)}} a_\xi e(\langle \xi,x\rangle) \bigg)^{r_k} \bigg(\sum_{\xi\in\cE^{(k)}} \bar{a}_\xi e(\langle -\xi,x\rangle) \bigg)^{s_k} \Bigg]=\\
    &\sum \left( \prod_{k\in\K^+} \prod_{n=1}^{r_k}a_{\xi_{k,n}}\prod_{m=1}^{s_k}\bar{a}_{\xi'_{k,m}} \right)\Ee\left[e\bigg(\bigg\langle \sum_{k\in\K^+}(\xi_{k,1}+\cdots+\xi_{k,r_k}-\xi'_{k,1}-\cdots-\xi'_{k,s_k}),x\bigg\rangle\bigg)\right]
  \end{align*}
  where the unlabelled summation runs over all choices of $\xi_{k,1},\dots,\xi_{k,r_k},\xi'_{k,1},\dots,\xi'_{k,s_k}\in \cE^{(k)}$ for every $k\in\K^+$. Now, for $\xi\in\Z^2$, we have
  \begin{equation*}
    \Ee[e(\langle \xi,x\rangle)]=\delta_{\xi,0}
  \end{equation*}
  and so
  \begin{align}\label{eq: moment vanishing sum}
    \Ee\Bigg[\prod_{k\in\K^+} \left(\sum_{\xi\in\cE^{(k)}} a_\xi e(\langle \xi,x\rangle)\right)^{r_k} & \left(\sum_{\xi\in\cE^{(k)}} \bar{a}_\xi e(\langle -\xi,x\rangle)\right)^{s_k} \Bigg]\notag \\
    &=\sum \prod_{k\in\K^+}\left(\prod_{n=1}^{r_k}a_{\xi_{k,n}}\prod_{m=1}^{s_k}\bar{a}_{\xi'_{k,m}}\right),
  \end{align}
  where the sum runs only over the $\xi_{k,1},\dots,\xi_{k,r_k},\xi'_{k,1},\dots,\xi'_{k,s_k}\in\cE^{(k)}$ that satisfy
  \begin{equation*}
    \sum_{k\in\K^+} (\xi_{k,1}+\cdots+\xi_{k,r_k}-\xi'_{k,1}-\cdots-\xi'_{k,s_k}) =0.
  \end{equation*}

  We split the sum into two separate cases, the ``diagonal'' contribution which correspond to the case when the tuple
  \begin{equation*}
    \xi'_{k,1},\dots,\xi'_{k,s_k}
  \end{equation*}
  is a re-ordering of the tuple
  \begin{equation*}
    \xi_{k,1},\dots,\xi_{k,r_k},
  \end{equation*}
  and the remaining ``off-diagonal'' contribution.

  \begin{claim}\label{cl: first claim}
    There exists $E_6=E_6(\delta,B_1,g)$ such that
    \begin{align*}
      \left(\prod_{k\in\K^+} \mu_{E,a}(I_k)^{-r_k-s_k}\right) \sum_{\mathrm{diagonal}} \prod_{k\in\K^+}& \left(\prod_{n=1}^{r_k}a_{\xi_{k,n}}\prod_{m=1}^{s_k}\bar{a}_{\xi'_{k,m}}\right)
       = \left(\prod_{k\in\K^+} \delta_{r_k,s_k}r_k!\right) + O_{B_1}\left(\frac{M}{\delta}\right)
    \end{align*}
    for all $E\geq E_6$.
  \end{claim}

  \begin{claim}\label{cl: second claim}
    \begin{equation*}
      \prod_{k\in\K^+} \mu_{E,a}(I_k)^{-\frac{r_k+s_k}{2}} \Abs{\sum_{\mathrm{off-diagonal}} \prod_{k\in\K^+}\left(\prod_{n=1}^{r_k}a_{\xi_{k,n}}\prod_{m=1}^{s_k}\bar{a}_{\xi'_{k,m}}\right)}\leq \frac{C(B_1)}{\delta^{B_1/2}} \frac{g(N)^{B_1/2} } {N^{3(\frac12-\gamma)}}.
    \end{equation*}
  \end{claim}

  Combining these two claims with \eqref{eq: Gauss mom}, \eqref{eq: bk mom} and \eqref{eq: vanishing relation} we see that
  \begin{equation*}
    \left|\E\left[\prod_{k\in\K^+} c_k^{r_k}\bar{c}_k^{s_k}\right]-\Ee\left[\prod_{k\in\K^+} b_k^{r_k}\bar{b}_k^{s_k}\right]\right|\leq C(B_1) \left( \frac M\delta + \frac{1}{\delta^{B_1/2}} \frac{g(N)^{B_1/2} } {N^{3(\frac12-\gamma)}}\right).
  \end{equation*}
  By hypothesis \eqref{eq: bound on coeff} and our assumption that $N\to \infty$ we have $M\to0$ and $\frac{g(N)^{B_1/2} } {N^{3(\frac12-\gamma)}}\to 0$ (since $\gamma<\frac12$) as $E\to\infty$. This proves Proposition~\ref{prop:eps  Gauss mom}.
\end{proof}

It remains to prove Claims~\ref{cl: first claim} and \ref{cl: second claim}.
\begin{proof}[Proof of Claim~\ref{cl: first claim}]
  Note that in the diagonal case $r_k=s_k$ and $\prod_{n=1}^{r_k}a_{\xi_{k,n}}=\prod_{m=1}^{s_k}a_{\xi'_{k,m}}$, in which case
  \begin{equation*}
    \sum_{\text{diagonal}} \prod_{k\in\K^+}\left(\prod_{n=1}^{r_k}a_{\xi_{k,n}}\prod_{m=1}^{s_k}\bar{a}_{\xi'_{k,m}}\right) = \prod_{k\in\K^+} \sideset{}{^{(k)}}\sum \prod_{n=1}^{r_k}|a_{\xi_{k,n}}|^2
  \end{equation*}
  where $\sideset{}{^{(k)}}\sum$ denotes a sum over all tuples $\xi_{k,1},\dots,\xi_{k,r_k}\in\cE^{(k)}$ and re-orderings of that tuple $\xi'_{k,1},\dots,\xi'_{k,r_k}$. We enumerate the elements of $\cE^{(k)}$ as $\Xi_{k,1},\dots,\Xi_{k,N_k}$ and assume that $E$ is large enough that $N_k\geq r_k$ for all $k\in\K^+$ (recall \eqref{eq: lots of freq in support}). Corresponding to every tuple there is an integer $1\leq u_k \leq r_k$ such that exactly $u_k$ distinct elements of $\cE^{(k)}$ appear in the tuple. We let $1\leq n_{k,1}<\dots<n_{k,u_k}\leq N_k$ be the integers such that $\Xi_{k,n_{k,\alpha}}$ appears in the tuple and denote by $m_{k,\alpha}$ the number of times it appears. Note that $m_{k,1}+\dots+m_{k,u_k}=r_k$, that
  \begin{equation*}
    \prod_{n=1}^{r_k}|a_{\xi_{k,n}}|^2 = \prod_{\alpha=1}^{u_k}|a_{\Xi_{k,n_{k,\alpha} } } |^{2m_{k,\alpha}},
  \end{equation*}
  and that, given the values $u_k,n_{k,1},\dots,n_{k,u_k},m_{k,1},\dots,m_{k,u_k}$, there are
  \begin{equation*}
    \begin{pmatrix}
      r_k \\
      m_{k,1}\dots m_{k,u_k}
    \end{pmatrix}
    =\frac{r_k !}{\prod_{\alpha=1}^{u}m_{k,\alpha} !}
  \end{equation*}
  choices of $\xi_{k,1},\dots,\xi_{k,r_k}$ and the same number of independent choices of $\xi'_{k,1},\dots,\xi'_{k,r_k}$. This yields
  \begin{align*}
    \sum_{\text{diagonal}} \prod_{k\in\K^+}& \left(\prod_{n=1}^{r_k}a_{\xi_{k,n}}\prod_{m=1}^{s_k}\bar{a}_{\xi'_{k,m}}\right) =\\
    &\prod_{k\in\K^+} \sum_{u_k=1}^{r_k} \sideset{}{'} \sum \left(\frac{r_k !}{\prod_{\alpha=1}^{u_k}m_{k, \alpha} !} \right)^2 \sum_{n_{k,1},\dots,n_{k,u_k}} \prod_{\alpha=1}^{u_k}|a_{\Xi_{k,n_{k,\alpha} } } |^{2m_{k,\alpha}}
  \end{align*}
  where $\sideset{}{'} \sum$ indicates that we sum over all positive integers $m_{k,1},\dots,m_{k,u_k}$ that satisfy $m_{k,1}+\dots+m_{k,u_k}=r_k$. By symmetry we have
  \begin{equation*}
    \sum_{\text{diagonal}} \prod_{k\in\K^+}\left(\prod_{n=1}^{r_k}a_{\xi_{k,n}}\prod_{m=1}^{s_k}\bar{a}_{\xi'_{k,m}}\right) = \prod_{k\in\K^+} \sum_{u_k=1}^{r_k} \sideset{}{'} \sum \left(\frac{r_k !}{\prod_{\alpha=1}^{u_k}m_{k,\alpha} !} \right)^2 \frac{1}{u_k !} \sideset{}{''} \sum \prod_{\alpha=1}^{u_k}|a_{\Xi_{k,n_{k,\alpha} } } |^{2m_{k,\alpha}}
  \end{equation*}
  where $\sideset{}{'} \sum$ is as before and $\sideset{}{''} \sum$ indicates we sum over all $u_k$-tuples of distinct integers $1\leq n_{k,1},\dots,n_{k,u_k}\leq N_k$. Defining
  \begin{equation*}
    A_k(m_{k,1},\dots,m_{k,u_k})=\mu_{E,a}(I_k)^{-r_k} \sideset{}{''} \sum \prod_{\alpha=1}^{u_k}|a_{\Xi_{k,n_{k,\alpha} } } |^{2m_{k,\alpha}}
  \end{equation*}
  we have
  \begin{equation*}
    \begin{split}
      \left(\prod_{k\in\K^+} \mu_{E,a}(I_k)^{-r_k}\right) \sum_{\text{diagonal}} \prod_{k\in\K^+}&\left(\prod_{n=1}^{r_k}a_{\xi_{k,n}}\prod_{m=1}^{s_k}\bar{a}_{\xi'_{k,m}}\right) \\
      &= \prod_{k\in\K^+} \sum_{u_k=1}^{r_k} \sideset{}{'} \sum \left(\frac{r_k !}{\prod_{\alpha=1}^{u_k}m_{k,\alpha} !} \right)^2 \frac{1}{u_k !} A_k(m_{k,1},\dots,m_{k,u_k}).
    \end{split}
  \end{equation*}

  Recall from \eqref{eq: M def} that we have $|a_\xi|^2\leq M$ for all $\xi\in\cE$. Denote by $\sideset{}{'''} \sum$ the sum over \emph{all} $u_k$-tuples of integers $1\leq n_{k,1},\dots,n_{k,u_k}\leq N_k$ and note that
  \begin{align*}
    A_k(m_{k,1},\dots,m_{k,u_k})& \leq\mu_{E,a}(I_k)^{-r_k} M^{r_k-u_k}\sideset{}{''} \sum \prod_{\alpha=1}^{u_k}|a_{\Xi_{k,n_{k,\alpha} } } |^{2} \\
    &\leq \mu_{E,a}(I_k)^{-r_k} M^{r_k-u_k} \sideset{}{'''} \sum \prod_{\alpha=1}^{u_k}|a_{\Xi_{k,n_{k,\alpha} } } |^{2}\\
    &= \left(\frac{M}{\mu_{E,a}(I_k)}\right)^{r_k-u_k}.
  \end{align*}
  Thus for $k\in\K^+$ we have
  \begin{equation*}
    \sum_{u_k=1}^{r_k-1} \sideset{}{'} \sum \left(\frac{r_k !}{\prod_{\alpha=1}^{u_k}m_{k,\alpha} !} \right)^2 \frac{1}{u_k !} A_k(m_{k,1},\dots,m_{k,u_k})\leq C(r_k) \frac{M}{\mu_{E,a}(I_k)} \leq \frac{C(r_k) M}{\delta}.
  \end{equation*}
  On the other hand, since
  \begin{equation*}
    \sideset{}{'''} \sum \prod_{\alpha=1}^{u_k}|a_{\Xi_{k,n_{k,\alpha} } } |^{2}=\left(\sum_{n=1}^{N_k}|a_{\Xi_{k,n } } |^{2}\right)^{u_k} = \mu_{E,a}(I_k)^{u_k},
  \end{equation*}
  we have
  \begin{equation*}
    A_k(\underbrace{1,\dots,1}_{r_k \text{ times}}) = 1-\mu_{E,a}(I_k)^{-r_k}\sum \prod_{\alpha=1}^{r_k}|a_{\Xi_{k,n_{k,\alpha} } } |^{2},
  \end{equation*}
  where the sum runs over all tuples $1\leq n_{k,1},\dots,n_{k,r_k}\leq N_k$ with at least one repeated index. Given an $(r_k -1)$-tuple of integers we can generate at most $r_k^2$ tuples of integers of length $r_k$ with at least one repeated index by repeating one of the indices. Since this generates all such tuples we have, denoting by $\sideset{}{^o}\sum$ the sum over all integers $1\leq \tilde{n}_{k,1},\dots,\tilde{n}_{k,r_k-1}\leq N_k$,
  \begin{equation*}
    \mu_{E,a}(I_k)^{-r_k}\sum \prod_{\alpha=1}^{r_k}|a_{\Xi_{k,n_{k,\alpha} } } |^{2}\leq \frac{M r_k^2}{\mu_{E,a}(I_k)} \mu_{E,a}(I_k)^{1-r_k}\sideset{}{^o}\sum \prod_{\alpha=1}^{r_k-1}|a_{\Xi_{k,\tilde{n}_{k,\alpha} } } |^{2} = \frac{M r_k^2}{\mu_{E,a}(I_k)}\leq \frac{M r_k^2}{\delta}.
  \end{equation*}
  Putting all of this together we see that
  \begin{equation*}
    \sum_{u_k=1}^{r_k} \sideset{}{'} \sum \left(\frac{r_k !}{\prod_{\alpha=1}^{u_k}m_{k,\alpha} !} \right)^2 \frac{1}{u_k !} A_k(m_{k,1},\dots,m_{k,u_k}) = r_k! + O_{r_k}\left(\frac{M}{\delta}\right)
  \end{equation*}
  which yields
  \begin{align*}
    \left(\prod_{k\in\K^+} \mu_{E,a}(I_k)^{-r_k}\right) \sum_{\text{diagonal}} \prod_{k\in\K^+} \left(\prod_{n=1}^{r_k}a_{\xi_{k,n}}\prod_{m=1}^{s_k}\bar{a}_{\xi'_{k,m}}\right) &= \prod_{k\in\K^+} \left(r_k! + O_{r_k}\left(\frac{M^2 }{\delta}\right) \right)\\
    & =\left(\prod_{k\in\K^+} r_k!\right) + O_{B_1}\left(\frac{M}{\delta}\right),
  \end{align*}
  as claimed.
\end{proof}

\begin{proof}[Proof of Claim~\ref{cl: second claim}]
  Trivially we have
  \begin{equation}\label{eq: off diag bound}
    \Abs{\sum_{\text{off-diagonal}} \prod_{k\in\K^+}\left(\prod_{n=1}^{r_k}a_{\xi_{k,n}}\prod_{m=1}^{s_k}\bar{a}_{\xi'_{k,m}}\right)}\leq J M^{\sum_{k\in\K^+}\frac{r_k+s_k}{2}},
  \end{equation}
  where $J$ is the number of off-diagonal relations
  \begin{equation*}
    \sum_{k\in\K^+} (\xi_{k,1}+\cdots+\xi_{k,r_k}-\xi'_{k,1}-\cdots-\xi'_{k,s_k}) =0.
  \end{equation*}
  This is clearly bounded by the number of relations
  \begin{equation}\label{eq: vanishing relation}
    \xi_{1}+\cdots+\xi_{l}=0
  \end{equation}
  where $\xi_{1},\dots,\xi_{l}\in\cE$, $l=\sum_{k\in\K^+}r_k+s_k$ and there is a minimally vanishing sub-sum of \eqref{eq: vanishing relation} of length $l'\geq3$. Suppose that there are $\nu$ pairs of elements in \eqref{eq: vanishing relation} whose sum vanishes. The number of such relations is bounded by
  \begin{equation}\label{eq: bound for vanishing}
    C(l)N^\nu N^{\gamma(l-2\nu)},
  \end{equation}
  where we have applied the fact that $\cE$ satisfies the condition $I(\gamma,B_1)$. Summing \eqref{eq: bound for vanishing} over the values of $\nu$ such that $0\leq2\nu\leq l-3$ gives
  \begin{equation}\label{eq: bound for J}
    J\leq C(l)N^{l/2} N^{3(\gamma-\frac12)}.
  \end{equation}
  Combining \eqref{eq: off diag bound} and \eqref{eq: bound for J} we see that
  \begin{equation*}
    \Abs{\sum_{\text{off-diagonal}} \prod_{k\in\K^+}\left(\prod_{n=1}^{r_k}a_{\xi_{k,n}}\prod_{m=1}^{s_k}\bar{a}_{\xi'_{k,m}}\right)}\leq C(B_1)(MN)^{\sum_{k\in\K^+}\frac{r_k+s_k}{2}} N^{3(\gamma-\frac12)}.
  \end{equation*}
  Applying assumption \eqref{eq: bound on coeff} we see that
  \begin{equation*}
    \prod_{k\in\K^+} \mu_{E,a}(I_k)^{-\frac{r_k+s_k}{2}} \Abs{\sum_{\text{off-diagonal}} \prod_{k\in\K^+}\left(\prod_{n=1}^{r_k}a_{\xi_{k,n}}\prod_{m=1}^{s_k}\bar{a}_{\xi'_{k,m}}\right)}\leq \frac{C(B_1)}{\delta^{B_1/2}} \frac{g(N)^{B_1/2} } {N^{3(\frac12-\gamma)}}
  \end{equation*}
  as claimed.
\end{proof}

\subsection{Proof of Proposition \ref{prop: pass to random}: Bourgain's de-randomisation technique II, an approximately Gaussian field}

Our first step in the proof of Propositon~\ref{prop: pass to random} is the following lemma, which states that the distribution of each $b_k$ is ``nice''.

\begin{lemma}\label{lem: abs cont}
  Suppose that $N\to\infty$ and that the coefficients $a_\xi$ are of class $\cA(g)$, for some slowly growing function $g$. If $\delta>0$ is fixed then, for each $k\in\K^+$, the measure $(b_k)_*\Prr$ is absolutely continuous with respect to Lebesgue measure on the plane, for sufficiently large $E$. In other words, there exists $E_7=E_7(\delta,g)$ such that, if $m$ denotes the Lebesgue measure and $B$ is a measurable set, then for all $E\geq E_7$
  \begin{equation}\label{eq: abs cont}
    m(B)=0\Rightarrow\Prr[b_k\in B]=0.
  \end{equation}
\end{lemma}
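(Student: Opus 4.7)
The plan is to prove absolute continuity by showing that $b_k\colon \T^2\to\C\cong\R^2$ is a real-analytic map whose real Jacobian
\[
J(x)=\im\bigl(\overline{\partial_{x_1}b_k(x)}\cdot\partial_{x_2}b_k(x)\bigr)
\]
is not identically zero on $\T^2$. Once this is established, $\{J=0\}$ is a proper real-analytic subvariety of the connected manifold $\T^2$ and hence has Lebesgue measure zero, while on $\{J\neq 0\}$ the map $b_k$ is a local diffeomorphism by the inverse function theorem, so preimages of planar Lebesgue null sets remain null. These two observations together yield \eqref{eq: abs cont}.

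To see that $J\not\equiv 0$, a direct expansion shows that the Fourier coefficient of $J$ at any frequency $\zeta\in\Z^2\setminus\{0\}$ is a non-zero constant times
\[
\sum_{\substack{\xi,\eta\in\cE^{(k)}\\ \eta-\xi=\zeta}}\bar a_\xi a_\eta(\xi_1\eta_2-\xi_2\eta_1).
\]
Two geometric observations then reduce the problem to producing two non-vanishing coefficients. First, since $k\in\K^+$, the arc $I_k$ lies in an open half-circle, so $\cE^{(k)}$ contains no antipodal pair; since both $\xi$ and $\xi+\zeta$ must lie on the circle of radius $\sqrt E$, a standard ``two equal parallel chords'' argument shows that for any $\zeta\neq 0$ the representation $\zeta=\eta-\xi$ with $\xi,\eta\in\cE^{(k)}$ is unique (the only other candidate, $(-\eta,-\xi)$, fails to lie in $\cE^{(k)}$ because of the half-circle restriction). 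Second, for distinct $\xi,\eta\in\cE^{(k)}$, the wedge $\xi_1\eta_2-\xi_2\eta_1=E\sin(\theta_\eta-\theta_\xi)$ is non-zero. Hence it suffices to find distinct $\xi,\eta\in\cE^{(k)}$ with $a_\xi a_\eta\neq 0$, and for any such pair $\hat J(\eta-\xi)\neq 0$.

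The existence of two non-zero coefficients in $\cE^{(k)}$ is immediate from the hypotheses. Because $k\in\K$, one has $\sum_{\xi\in\cE^{(k)}}|a_\xi|^2\geq\delta$, while $a\in\cA(g)$ forces $\max_{\xi}|a_\xi|^2\leq g(N)/N$; combining these, the number of non-zero coefficients among $\{a_\xi:\xi\in\cE^{(k)}\}$ is at least $\delta N/g(N)$, which tends to infinity as $E\to\infty$ because $g$ is slowly growing and $N\to\infty$. For $E\geq E_7$ sufficiently large (depending on $\delta$ and $g$) this lower bound exceeds $1$; picking any two such coefficients forces $J\not\equiv 0$, completing the proof.

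The only non-routine ingredient is the uniqueness of the representation $\zeta=\eta-\xi$ within $\cE^{(k)}$, which is precisely where the assumption $k\in\K^+$ (rather than merely $k\in\K$) is used; all other steps are either standard measure-theoretic consequences of real-analyticity or a direct bookkeeping of the coefficient bound.
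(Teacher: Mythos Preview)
Your proof is correct and follows essentially the same route as the paper's: compute the Jacobian of $b_k$ as a trigonometric polynomial, use the bounds $\mu_{E,a}(I_k)\ge\delta$ and $M\le g(N)/N$ to produce at least two non-zero coefficients in $\cE^{(k)}$, and exhibit a non-zero Fourier coefficient of the Jacobian. The only cosmetic difference is that the paper singles out the pair $(\xi_k^+,\xi_k^-)$ of extremal argument among the non-zero coefficients, whereas you argue (via the parallel-chord observation) that \emph{any} distinct pair works; also, your remark that the half-circle restriction is where $k\in\K^+$ rather than $k\in\K$ enters is slightly off, since every arc $I_k$ already has angular length $\pi/K<\pi$ and hence contains no antipodal pair.
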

\begin{proof}
  We consider $\T^2$ as a subset of $\R^2$, and $b_k$ as a map from $\R^2$ to $\R^2$. Let $J_k$ be the Jacobian of this map. It suffices to show that the set
  \begin{equation*}
    \sigma=\{x\in\T^2: \det J_k(x)=0\}
  \end{equation*}
  has Lebesgue measure zero, since we may apply the inverse function theorem on $\T^2\setminus \sigma$ to see that $\Prr[(\T^2\setminus \sigma)\cap b_k^{-1}(B)]=0$. A routine (but tedious) calculation yields
  \begin{equation*}
    \det J_k(x)=-\frac{4\pi^2}{\mu_{E,a}(I_k)}\im\sum_{\xi,\xi'\in\cE^{(k)}} \xi_1\xi'_2 \overline{a_\xi} a_{\xi'} e(\langle \xi-\xi',x\rangle)
  \end{equation*}
  which means that $x\in \sigma$ if and only if
  \begin{equation}\label{eq: Jacobian =0}
    \sum_{\xi,\xi'\in\cE^{(k)}} (\xi_1\xi'_2- \xi_2\xi'_1) \overline{a_\xi} a_{\xi'} e(\langle \xi-\xi',x\rangle)=0.
  \end{equation}

  It therefore suffices to see that at least one of the coefficients in the expansion \eqref{eq: Jacobian =0} is non-zero. Using the fact that $\mu_{E,a}(I_k) \geq \delta$ for each $k\in\K^+$, and assumption \eqref{eq: bound on coeff} along with the hypothesis that $g$ is a slowly growing function, we see that, for large enough $E$, there exists at least two $\xi\in\cE^{(k)}$ with $a_\xi\neq0$. Let $\xi_k^+$ (respectively $\xi_k^-$) be the element of $\{\xi\in\cE^{(k)}\colon a_\xi\neq0\}$ with the largest (respectively smallest) argument. Then the coefficient of $e(\langle \xi_k^+-\xi_k^-,x\rangle)$ is $((\xi_k^+)_1(\xi_k^-)_2- (\xi_k^+)_2(\xi_k^-)_1) \overline{a_{\xi_k^+}} a_{\xi_k^-}$ which is clearly non-zero. This completes the proof of Lemma~\ref{lem: abs cont}.
\end{proof}

Our next step is to show that the distribution of the quasi-random sequence $\{b_k\}_{k\in\K^+}$ is ``close'' to the distribution of the sequence $\{c_k\}_{k\in\K^+}$, at least in terms of the probability of certain events. We begin with some notation. We define $\kappa=|\K^+|$ and write $Q(z,L)$ for the (closed) cube  centred at $z$ of side-length $L$ in $\R^{2\kappa}$ (which we identify with $\C^\kappa$). Given a parameter $\eta$ we divide $Q(z,L)$ into $\lceil \frac{L}{\eta} \rceil^{2\kappa}$ sub-cubes of side-length at most $\eta$, which we label $Q_\beta^{z,L,\eta}$ for $1\leq\beta\leq\lceil \frac{L}{\eta} \rceil^{2\kappa}$. Here the indexing $\beta$ is arbitrary, and we also arbitrarily assign the boundaries of the sub-cubes in some consistent manner, that will be unimportant for our purposes.

Given another parameter $\Upsilon$ we define
\begin{equation*}
  \Lambda_{\Upsilon,\eta}=\frac \eta \Upsilon \Z^{2\kappa} \cap Q\left(0,\frac\eta 2\right) =\left\{ \left(\frac \eta \Upsilon a_1,\dots,\frac \eta \Upsilon a_{2\kappa}\right) \colon -\frac \Upsilon2\leq a_k\leq\frac \Upsilon2,\quad a_k\in\Z\right\}.
\end{equation*}
We are ready to state the next lemma.

\begin{lemma}[The quasi-distribution of $b_k$ is approximately Gaussian]\label{lem: eps Gaussian}
   Let $0<\epsilon_7,\delta,\eta<1$ and $K,L>1$ be given. Suppose that $N\to\infty$ as $E\to\infty$ and that the coefficients $a_\xi$ are of class $\cA(g)$, for some slowly growing function $g$. There exists $\lambda\in\Lambda_{\frac6{\epsilon_7},\eta}$, $B_2=B_2(K,L,\epsilon_7,\eta)$ and $E_8=E_8(K,L,\epsilon_7,\delta,\eta,\gamma,g)$ such that if $\cE$ satisfies $I(\gamma,B_2)$ for all $E$ and for some fixed $\gamma$, then for all $E\geq E_8$
  \begin{equation}\label{eq: eps gauss big}
    \big|\Pro[c\in Q(\lambda,L)]-\Prr[b\in Q(\lambda,L)]\big|\leq \epsilon_7
  \end{equation}
  and
  \begin{equation}\label{eq: eps gauss small}
    \big|\Pro[c\in Q_\beta^{\lambda,L,\eta}]-\Prr[b\in Q_\beta^{\lambda,L,\eta}]\big|\leq \epsilon_7
  \end{equation}
  for all $\beta$.
\end{lemma}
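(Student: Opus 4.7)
The plan is to deduce the distributional matching of the lemma from the moment-matching of Proposition~\ref{prop:eps  Gauss mom} by approximating indicator functions of cubes by polynomials, combined with a pigeonhole argument over $\lambda\in \Lambda_{6/\epsilon_7,\eta}$ that arranges the cube boundaries to avoid any concentration of the $\Prr$-law of $b=(b_k)_{k\in\K^+}$. Throughout I identify $\C^\kappa$ with $\R^{2\kappa}$, where $\kappa=|\K^+|\le K$.

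First, fix a shell width $w$ of order $\epsilon_7 \eta/\kappa$ and, for each candidate cube $Q$ (either $Q(\lambda,L)$ or some $Q_\beta^{\lambda,L,\eta}$), build smooth sandwich functions $\chi_Q^{\pm}\in C_c^\infty(\R^{2\kappa})$ with $\chi_Q^{-}\le \mathbf{1}_Q\le \chi_Q^{+}$ and $\chi_Q^{+}-\chi_Q^{-}$ supported on the $w$-neighbourhood $S_Q$ of $\partial Q$, with the standard scaling $\|\chi_Q^{\pm}\|_{C^p}\lesssim w^{-p}$. Truncating to a ball $B_R$ with $R=O(\sqrt{\kappa\log(1/\epsilon_7)}+L)$, outside which the standard complex Gaussian on $\C^\kappa$ has mass $\lesssim \epsilon_7$, quantitative Weierstrass/Jackson theorems produce polynomials $P_Q^{\pm}$ in the variables $c_k,\bar c_k$ of total degree at most some $B_2=B_2(K,L,\epsilon_7,\eta)$ approximating $\chi_Q^{\pm}$ uniformly on $B_R$ to within tolerance $\epsilon_7$.

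Writing each $P_Q^{\pm}$ as a linear combination of monomials $\prod_{k\in\K^+} c_k^{r_k}\bar c_k^{s_k}$ with $\sum_k(r_k+s_k)<B_2+1$ and invoking Proposition~\ref{prop:eps  Gauss mom} with $B_1=B_2+1$ monomial-by-monomial, provided $\cE$ satisfies $I(\gamma,B_2+1)$ and $E$ is large enough (giving $E_8$), I obtain
\[
\bigl|\E[P_Q^{\pm}(c)]-\Ee[P_Q^{\pm}(b)]\bigr|=O(\epsilon_7)
\]
uniformly over $Q$ and $\lambda\in \Lambda_{6/\epsilon_7,\eta}$. Combined with the uniform polynomial approximation on $B_R$ and a Gaussian tail bound (handled by taking $R$ large enough that both $\chi_Q^{\pm}$ vanishes and the polynomial contribution outside $B_R$ is negligible after a Cauchy--Schwarz estimate), the same bound holds with $\chi_Q^{\pm}$ replacing $P_Q^{\pm}$. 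Since $|\chi_Q^{\pm}-\mathbf{1}_Q|\le \mathbf{1}_{S_Q}$, it remains to bound $\Pro[c\in S_Q]$ and $\Prr[b\in S_Q]$ by $O(\epsilon_7)$; the former is $O(w\cdot\mathrm{poly}(L,\kappa))=O(\epsilon_7)$ from the boundedness of the Gaussian density.

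The $\Prr$-bound on the shell masses is the crucial step, and is precisely where the discrete family $\Lambda_{6/\epsilon_7,\eta}$ enters. Setting $U(\lambda)=\bigcup_\beta S_{Q_\beta^{\lambda,L,\eta}}\cup S_{Q(\lambda,L)}$, a union of $w$-neighbourhoods of axis-parallel hyperplanes arranged in a grid that translates with $\lambda$, an elementary counting argument shows that any fixed $x\in \R^{2\kappa}$ lies in $U(\lambda)$ for at most an $O(\kappa w/\eta)$ fraction of the lattice points $\lambda\in \Lambda_{6/\epsilon_7,\eta}$ (whose step size in each coordinate is $\epsilon_7\eta/6$). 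Fubini then gives
\[
\frac{1}{|\Lambda_{6/\epsilon_7,\eta}|}\sum_{\lambda}\Prr\bigl[b\in U(\lambda)\bigr]=O\!\left(\frac{\kappa w}{\eta}\right)=O(\epsilon_7),
\]
and pigeonhole furnishes a $\lambda\in \Lambda_{6/\epsilon_7,\eta}$ with $\Prr[b\in U(\lambda)]=O(\epsilon_7)$; since $S_Q\subseteq U(\lambda)$ for every relevant $Q$, both \eqref{eq: eps gauss big} and \eqref{eq: eps gauss small} follow. The main anticipated obstacle is the precise dependence of $B_2$ on the parameters: Weierstrass approximation forces $B_2$ to grow polynomially in $1/w\asymp \kappa/(\epsilon_7\eta)$ and in $R$, and one must verify that the resulting $B_2$ is compatible with the condition $I(\gamma,B_2+1)$ on $\cE$ and with the threshold $E_8$ coming from Proposition~\ref{prop:eps  Gauss mom}, keeping careful track of the $\kappa\le K$ dependence throughout.
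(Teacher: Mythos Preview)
Your approach is correct and shares the two essential ideas with the paper's proof: (1) deducing the distributional estimate from the moment-matching Proposition~\ref{prop:eps  Gauss mom}, and (2) a pigeonhole over $\lambda\in\Lambda_{6/\epsilon_7,\eta}$ to locate a grid shift for which the $\Prr$-law of $b$ puts little mass near the cube boundaries (the paper's Claim~\ref{cl: choose lambda}). The implementation of step~(1), however, is genuinely different. The paper works with characteristic functions: it expresses $\Prr[b\in Q]$ via L\'evy's inversion formula, truncates the Fourier integral to $[-T_0,T_0]^{2\kappa}$, and compares $\varphi(t)=\Ee[e^{i\langle t,X\rangle}]$ with the Gaussian characteristic function $e^{-|t|^2/4}$ uniformly on that compact box using Taylor expansion and Proposition~\ref{prop:eps  Gauss mom}. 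The truncation error is what the pigeonhole step controls, and Lemma~\ref{lem: abs cont} is invoked so that the cubes are continuity sets for L\'evy's formula.

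The chief advantage of the paper's route is that characteristic functions are uniformly bounded by $1$, so no tail control on $b$ is ever needed. Your polynomial-approximation route must confront the fact that $P_Q^{\pm}$ can be large outside $B_R$, and your Cauchy--Schwarz sketch hides a mild circularity: bounding $\Ee[|P_Q^{\pm}(b)|^2]$ requires moments of $b$ of order $2B_2$, whose size depends on $B_2$, which in turn depends on $R$ through Jackson's theorem. This can be untangled---fix $R$ first from a Chebyshev bound using only $\Ee[|b_k|^2]=1$, then determine $B_2$, then invoke Proposition~\ref{prop:eps  Gauss mom} with $B_1=2B_2+1$ so that both the moment comparison and a uniform bound on $\Ee[|P_Q^{\pm}(b)|^2]$ are available---but it is more laborious than the characteristic-function argument. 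On the plus side, your approach dispenses with Lemma~\ref{lem: abs cont} altogether, since the smooth sandwich $\chi_Q^{\pm}$ already absorbs the boundary.
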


\begin{proof}
  We let $k_1<\dots<k_\kappa$ be the elements of $\K^+$ and define real-valued quasi-random variables $(X_j)_{j=1}^{2\kappa}$ by $X_{2j-1}=\re(b_{k_j})$ and $X_{2j}=\im(b_{k_j})$. For $t=(t_1,\dots ,t_{2\kappa})\in \R^{2\kappa}$ we define
  \begin{equation*}
    \varphi(t)=\Ee\left[e^{i\langle t,X\rangle }\right]
  \end{equation*}
  where $\langle\cdot,\cdot\rangle$ is the usual inner product in $\R^{2\kappa}$.

  For any (bounded) cube $Q=\prod_{j=1}^{2\kappa}(p_j,q_j)$ (which is a continuity set, by Lemma~\ref{lem: abs cont}) we have, by Levy's inversion formula (see \cite{Bil}*{Theorem 26.2} for the one-dimensional case),
  \begin{equation}\label{eq: Levy}
    \Prr[b\in Q]=\frac1{(2\pi)^{2\kappa}} \lim_{T_1,\dots,T_{2\kappa}\to\infty} \int_{-T_1}^{T_1} \cdots \int_{-T_{2\kappa}}^{T_{2\kappa}}\prod_{j=1}^{2\kappa} \left( \frac{ e^{-it_j p_j} - e^{-it_j q_j} } {it_j} \right) \varphi(t)\,dt_{2\kappa}\dots\,dt_1.
  \end{equation}
  Applying Fubini's theorem (which is allowed because $|e^{i\langle t,X\rangle  }|\leq1$ and $\left|\frac{ e^{-it_j p_j} - e^{-it_j q_j} } {it_j}\right|\leq q_j-p_j$) we get
  \begin{align}\label{eq: Levy inv finite}
    \int_{-T_1}^{T_1} \cdots \int_{-T_{2\kappa}}^{T_{2\kappa}}\prod_{j=1}^{2\kappa} \Big( \frac{ e^{-it_j p_j} - e^{-it_j q_j} } {it_j}& \Big)  \varphi(t)\,dt_{2\kappa}\dots\,dt_1\notag\\
    &=\Ee\left[\prod_{j=1}^{2\kappa}\int_{-T_j}^{T_j} \frac{ e^{it_j(X_j- p_j)} - e^{it_j (X_j-q_j)} } {it_j} \,dt_j \right]\notag\\
    &=\Ee\left[\prod_{j=1}^{2\kappa}\int_{-T_j}^{T_j} \frac{ \sin(t_j(X_j- p_j)) - \sin(t_j (X_j-q_j)) } {t_j} \,dt_j \right],
  \end{align}
  where the last equality follows from parity considerations. We write $\sgn(x)=+1,0$ or $-1$ if $x$ is positive, $0$ or negative respectively, and define $S(T)=\int_0^T\frac{\sin t}t \,dt$ for $T>0$. We re-write \eqref{eq: Levy inv finite} as
  \begin{align}\label{eq: Levy 2}
    \int_{-T_1}^{T_1} \cdots \int_{-T_{2\kappa}}^{T_{2\kappa}}\prod_{j=1}^{2\kappa} &\Big( \frac{ e^{-it_j p_j} - e^{-it_j q_j} } {it_j} \Big)  \varphi(t)\,dt_{2\kappa}\dots\,dt_1\notag\\
    &=\Ee\left[\prod_{j=1}^{2\kappa}2\big(\sgn(X_j- p_j)S(T_j|X_j- p_j|) - \sgn(X_j- q_j)S(T_j|X_j- q_j|)\big) \right].
  \end{align}
  Recall that $S(T)=\frac\pi 2 + O(T^{-1})$ for large $T$ (though the (infinite) integral is not absolutely convergent). Moreover $0\leq S(T)\leq S(\pi)\leq\pi$ for all $T>0$. This implies that the expression inside the quasi-expectation in \eqref{eq: Levy 2} is bounded, and so we may apply dominated convergence to see that, taking into account \eqref{eq: Levy}, for any $T_0>0$
  \begin{align}\label{eq: long expr Levy}
    \Prr[b\in Q]-\frac1{(2\pi)^{2\kappa}}& \int_{-T_0}^{T_0} \cdots \int_{-T_{0}}^{T_{0}}\prod_{j=1}^{2\kappa} \left( \frac{ e^{-it_j p_j} - e^{-it_j q_j} } {it_j} \right) \varphi(t) \,dt_{2\kappa}\dots\,dt_1\notag\\
    &= \frac1{(2\pi)^{2\kappa}} \Ee\bigg[\prod_{j=1}^{2\kappa}2\bigg(\sgn(X_j- p_j)\left(\frac\pi 2-S(T_0|X_j- p_j|)\right)\\
    &\qquad\qquad\qquad\qquad\qquad - \sgn(X_j- q_j)\left(\frac\pi 2-S(T_0|X_j- q_j|)\right)\bigg) \bigg]\notag.
  \end{align}

  \begin{claim}\label{cl: choose lambda}
    If $\Upsilon\geq1$ then there exists $\lambda\in\Lambda_{\Upsilon,\eta}$ such that
    \begin{equation*}
      \Prr\left[b\in\cup_{\beta}\left(\partial Q_\beta^{\lambda,L,\eta}+Q\big(0,\frac{\eta}{2\Upsilon}\big)\right)\right]\leq \frac1\Upsilon.
    \end{equation*}
  \end{claim}
  We will prove this claim at the end. Choosing $\Upsilon=\frac6{\epsilon_7}$, fixing a $\beta$, choosing $\lambda$ to satisfy the claim and applying \eqref{eq: long expr Levy} to $Q_\beta^{\lambda,L,\eta}$ we see that
  \begin{align}\label{eq: Levy difference fin inf}
    \Abs{\Prr[b\in Q_\beta^{\lambda,L,\eta}]-\frac1{(2\pi)^{2\kappa}}\int_{-T_0}^{T_0} \cdots \int_{-T_{0}}^{T_{0}}\prod_{j=1}^{2\kappa} \left( \frac{ e^{-it_j p_j} - e^{-it_j q_j} } {it_j} \right) \varphi(t) \,dt_{2\kappa}\dots\,dt_1}\\
    \leq \left(\frac {C\Upsilon}{T_0 \eta}\right)^{2\kappa} + \frac1\Upsilon;\notag
  \end{align}
  where $p_j$ and $q_j$ are now defined in the obvious way; here we have used the fact that
  \begin{equation*}
    2\bigg(\Abs{\frac\pi 2-S(T_0|X_j- p_j|)}+\Abs{\frac\pi 2-S(T_0|X_j- q_j|)}\bigg)\leq2\pi
  \end{equation*}
  and the fact that  on the complement of the event $\{b\in\cup_{\beta}\left(\partial Q_\beta^{\lambda,L,\eta}+Q\big(0,\frac{\eta}{2\Upsilon}\big)\right)\}$ we have $|X_j- p_j|,|X_j- q_j|\geq \frac{\eta}{2\Upsilon}$. Choosing $T_0$ large (depending on $\epsilon_7$ and $\eta$), and taking into account our choice of $\Upsilon$, we see that the right hand side of \eqref{eq: Levy difference fin inf} is at most $\frac{\epsilon_7}3$, and this is uniform in $\beta$.

  Similar computations yield
  \begin{equation*}
    \Abs{\Pro[c\in Q_\beta^{\lambda,L,\eta}]-\frac1{(2\pi)^{2\kappa}}\int_{-T_0}^{T_0} \cdots \int_{-T_{0}}^{T_{0}}\prod_{j=1}^{2\kappa} \left( \frac{ e^{-it_j p_j} - e^{-it_j q_j} } {it_j} \right) e^{- \frac {|t|^2} {4} } \,dt_{2\kappa}\dots\,dt_1}\leq \frac{\epsilon_7}3.
  \end{equation*}
  Thus to prove \eqref{eq: eps gauss small} it remains only to show that
  \begin{equation}\label{eq: 3eps}
    \Abs{\frac1{(2\pi)^{2\kappa}}\int_{-T_0}^{T_0} \cdots \int_{-T_{0}}^{T_{0}}\prod_{j=1}^{2\kappa} \left( \frac{ e^{-it_j p_j} - e^{-it_j q_j} } {it_j} \right) (\varphi(t) - e^{- \frac {|t|^2} {4} }) \,dt_{2\kappa}\dots\,dt_1}\leq \frac{\epsilon_7}3.
  \end{equation}
  The left-hand side of \eqref{eq: 3eps} is clearly bounded by
  \begin{equation*}
    \left(\frac{T_0 \eta}{\pi}\right)^{2\kappa} \max_{t\in[-T_0,T_0]^{2\kappa}}\Abs{\varphi(t) - e^{- \frac {|t|^2} {4}}}.
  \end{equation*}
  But since the difference $\Abs{\varphi(t) - e^{- \frac {|t|^2} {4}}}$ on a compact interval can be estimated by the difference of moments, we see that the required conclusion is a direct consequence of Proposition~\ref{prop:eps  Gauss mom}, by choosing an appropriately large $B_1$ (depending on $\epsilon_7,\eta$ and $K$). The proof that
  \begin{equation*}
    \big|\Pro[c\in Q(\lambda,L)]-\Prr[b\in Q(\lambda,L)]\big|\leq \epsilon_7
  \end{equation*}
  is similar, and omitted.
\end{proof}

\begin{proof}[Proof of Claim~\ref{cl: choose lambda}]
  Note that
  \begin{equation*}
    \sum_{\lambda\in\Lambda_{\Upsilon,\eta}}\Prr\left[ b\in\cup_{\beta}\left(\partial Q_\beta^{\lambda,L,\eta}+Q\big(0,\frac{\eta}{2\Upsilon}\big)\right) \right]=\Prr\left[ b\in Q\big(0,L+\frac{\eta}{2}\big) \right]\leq 1,
  \end{equation*}
  and since $|\Lambda_{\Upsilon,\eta}|\geq \Upsilon^{2\kappa}$ we see that there exists at least one $\lambda$ such that
  \begin{equation*}
    \Prr\left[ b\in\cup_{\beta}\left(\partial Q_\beta^{\lambda,L,\eta}+Q\big(0,\frac{\eta}{2\Upsilon}\big)\right) \right]\leq \Upsilon^{-2\kappa}\leq \Upsilon^{-1},
  \end{equation*}
  as claimed.
\end{proof}

\begin{proof}[Proof of Proposition~\ref{prop: pass to random}]
  We begin by constructing $\tau$, we then will use this to construct a $\Psi$, and finally verify that they satisfies properties (i)-(iv). Let $L=2\sqrt{2\log\frac{K}{\epsilon_3}}$, $\eta=\frac{\epsilon_1}{4\sqrt{2}\pi R K}$ and $0<\epsilon_7<1$ (to be specified). Let $Q(\lambda,L)$ be the cube constructed in Lemma~\ref{lem: eps Gaussian} and $Q_\beta^{\lambda,L,\eta}$ its corresponding decomposition. For each $\beta$ we consider the event
  \begin{equation*}
    A_\beta=\{c\in Q_\beta^{\lambda,L,\eta}\}
  \end{equation*}
  and the quasi-event
  \begin{equation*}
    \mathcal{Q}_\beta=\{b\in Q_\beta^{\lambda,L,\eta}\}.
  \end{equation*}
  (Note that by \eqref{eq: abs cont} and the fact that $c$ is a Gaussian vector, we don't need to worry about the boundary of the cubes.) By \eqref{eq: eps gauss small} we have
  \begin{equation}\label{eq: AB close}
    \big|\Pro[A_\beta]-\Prr[\mathcal{Q}_\beta]\big|\leq \epsilon_7
  \end{equation}
  for large enough $E$. Now since $\Pro[A_\beta]\geq e^{-2KL^2}\eta^{2K}\pi^{-K}$, we have
  \begin{equation*}
    \big|\Pro[A_\beta]-\Prr[\mathcal{Q}_\beta]\big|\leq \epsilon_7< \epsilon_3 \Pro[A_\beta]
  \end{equation*}
  provided that
  \begin{equation}\label{eq: choice eps5}
    \epsilon_7 < \frac{\eta^{2K}\epsilon_3 e^{-2KL^2}}{\pi^{K}}.
  \end{equation}

  Now, by \eqref{eq: abs cont} and the fact that $c$ is a Gaussian vector, for each $\beta$ we may find $A_\beta'\subset A_\beta$ and $\mathcal{Q}_\beta'\subset \mathcal{Q}_\beta$
  \begin{equation}\label{eq: proA alpha estimate}
    \Pro[A_\beta']=\Prr[\mathcal{Q}_\beta']>\left(1-\epsilon_3\right)\Pro[A_\beta].
  \end{equation}
  Furthermore, by\footnote{Actually, combining \cite{Roy}*{Theorem 15.4} with \cite{Roy}*{Proposition 15.3} we see that there exists a measure preserving map $\tau_\beta'\colon A_\beta'\to [0,1]$, where the measure on $[0,1]$ is the Lebesgue measure. However, by modifying the proof of \cite{Roy}*{Theorem 15.4} appropriately, it is easy to construct a measure preserving map $\tau_\beta''[0,1]\to \mathcal{Q}_\beta'$, and the desired map is given by composing these two mappings.} \cite{Roy}*{Theorem 15.4}, we may find a measure preserving map $\tau_\beta\colon A_\beta'\to \mathcal{Q}_\beta'$, that is, $(\tau_\beta)_*\Pro=\Prr$ on $A_\beta'$. We define $\Omega'=\cup_\beta A_\beta'$ and $\tau$ by $\tau(\omega)=\tau_\beta(\omega)$ for $\omega\in A_\beta'$.

  Now for $\omega\in\Omega'$ we define
  \begin{equation}\label{eq: Psi def}
    \begin{split}
      \Psi_\omega(y)&=\psi_{\tau(\omega)}(y)+\phi_{\tau(\omega)}(y)-\Phi_{\omega}(y)\\
      &=\psi_{\tau(\omega)}(y)+\sum_{k\in\K} \mu(I_k)^{\frac12} \{b_k(\tau(\omega))-c_k(\omega)\} e(\langle R\zeta^{(k)},y\rangle)
    \end{split}
  \end{equation}
  and put $\Psi_\omega(y)=0$ otherwise.

  We begin by checking (i). Note that for all $k$
  \begin{equation*}
    \|e(\langle R\zeta^{(k)},\cdot\rangle)\|_{\mathcal C^1}\leq2\pi R
  \end{equation*}
  (since $R>1$) so that, for all $\omega\in\Omega'$, we have
  \begin{align*}
    \|\Psi_\omega\|_{\mathcal C^1}&\leq\|\psi_{\tau(\omega)}\|_{\mathcal C^1}+\sum_{k\in\K} |b_k(\tau(\omega))-c_k(\omega)| \|e(\langle R\zeta^{(k)},\cdot\rangle)\|_{\mathcal C^1}\\
    &\leq\epsilon_1+ 2\pi R\sum_{k\in\K} |b_k(\tau(\omega))-c_k(\omega)|,
  \end{align*}
  where we have applied Proposition~\ref{prop: gathering}. Now each $\omega\in\Omega'$ is in $A_\beta'$ for some $\beta$, and so the vectors $c(\omega)$ and $b(\tau(\omega))$ are in the same cube $Q_\beta^{\lambda,L,\eta}$, which means that $|b_k(\tau(\omega))-c_k(\omega)|\leq\sqrt{2}\eta$. We therefore have
  \begin{equation*}
    \|\Psi_\omega\|_{\mathcal C^1}\leq\epsilon_1+ 4\sqrt{2}\pi RK\eta=2\epsilon_1
  \end{equation*}
  for all $\omega\in\Omega'$ by our choice of $\eta$, and since $\Psi_\omega(y)=0$ for $\omega\notin\Omega'$ we see that (i) holds.

  Now for $\omega\in A_\beta'$ we have $\Phi_\omega+\Psi_\omega=\phi_{\tau_\beta(\omega)}+\psi_{\tau_\beta(\omega)}$ by \eqref{eq: Psi def} and so, by Proposition~\ref{prop: gathering}, (ii) holds on $\Omega'$. Otherwise, $\Psi_\omega(y)=0$ and a routine calculation shows that $\Delta\Phi_\omega=-4\pi^2R^2\Phi_\omega$.

  To show (iii) , we first note that
  \begin{equation*}
    \Omega\setminus\Omega'= \{c\notin Q(\lambda,L) \}\cup(\cup_\beta A_\beta\setminus A_\beta')
  \end{equation*}
  and
  \begin{equation*}
    \Pro[c\notin Q(\lambda,L)]\leq\Pro\left[\cup_k \left\{|c_k|>\frac{L-\eta}2\right\}\right]\leq Ke^{-\frac{L^2}8}=\epsilon_3
  \end{equation*}
  by our choice of $L$, while, by \eqref{eq: proA alpha estimate},
  \begin{equation*}
    \Pro[\cup_\beta A_\beta\setminus A_\beta']\leq\sum_\beta\Pro[A_\beta\setminus A_\beta']\leq\epsilon_3\sum_\beta\Pro[A_\beta]\leq\epsilon_3.
  \end{equation*}
  We also have
  \begin{equation*}
    \T^2\setminus\tau(\Omega')= \{b\notin Q(\lambda,L) \}\cup(\cup_\beta \mathcal{Q}_\beta\setminus \mathcal{Q}_\beta')
  \end{equation*}
  and, by \eqref{eq: eps gauss big},
  \begin{equation*}
    \Prr[b\notin Q(\lambda,L)]\leq\epsilon_7+\Pro[c\notin Q(\lambda,L)]\leq\epsilon_7+\epsilon_3\leq 2\epsilon_3
  \end{equation*}
  by \eqref{eq: choice eps5} , while, by \eqref{eq: proA alpha estimate} and \eqref{eq: AB close},
  \begin{equation*}
    \Prr[\cup_\beta \mathcal{Q}_\beta\setminus \mathcal{Q}_\beta']\leq\sum_\beta\Prr[\mathcal{Q}_\beta\setminus \mathcal{Q}_\beta']\leq\sum_\beta\left((1+\epsilon_3)\Pro[A_\beta]-\Pro[A_\beta']\right)\leq 2\epsilon_3.
  \end{equation*}
  This shows (iii).

  It remains to verify (iv). Note first that by Courant's Theorem we have $\cN_{\Phi_\omega}=O(R^2)$ and $\cN_{\phi_x+\psi_x}=O(R^2)$ (deterministically, i.e., for all $\omega$ and $x$). We therefore have
  \begin{align*}
    \E[\cN_{\Phi+\Psi}]&=\int_{\Omega'}\cN_{\Phi_\omega+\Psi_\omega}d\Pro(\omega)+\int_{\Omega\setminus\left(\Omega'\right)}\cN_{\Phi_\omega}d\Pro[\omega]\\
    &=\sum_\beta\int_{A_\beta'}\cN_{\phi_{\tau_\beta(\omega)}+\psi_{\tau_\beta(\omega)}}d\Pro(\omega)+O(R^2)\Pro[\Omega\setminus\left(\Omega'\right)]\\
    &=\sum_\beta\int_{\mathcal{Q}_\beta'}\cN_{\phi_{x}+\psi_{x}}d\Prr(x)+O(R^2)\Pro[\Omega\setminus\left(\Omega'\right)]\\
    &=\Ee[\cN_{\phi+\psi}]+O(R^2)\left(\Prr[\T^2\setminus\tau(\Omega')]+\Pro[\Omega\setminus\left(\Omega'\right)]\right).
  \end{align*}
  The estimates for these probabilities in (iii) give us
  \begin{equation*}
    \E[\cN_{\Phi+\Psi}]=\Ee[\cN_{\phi+\psi}]+O(\epsilon_3 R^2),
  \end{equation*}
  as desired.
\end{proof}

\section{Applying Nazarov and Sodin's results}\label{sec: Naz Sod}

\subsection{Proof of Proposition \ref{prop: Psi negligible}}

The proof of this  proposition requires two lemmas, both coming from \cite{NaSo2}. We begin with some notation. $Q_{+t}$ denotes a $t$-neighbourhood of the set $Q\subseteq\R^2$. We say that a function $r\colon U\times U \to\R$ belongs to the class $C^{k,k}(U\times U)$ if all of the partial derivatives $\partial_y^\alpha\partial_{y'}^\beta \,r(y,y')$ with $|\alpha|,|\beta|\leq k$ (taken in any order) exist and are continuous.
\begin{lemma}[\cite{NaSo2}*{Lemma 7}]\label{lem: Sodin f and grad not simult small}
  Suppose that $Q\subseteq \R^2$ is an open square, $U$ is an open neighbourhood of $\overline{Q_{+1/2}}$ and $h\colon U\to\R$ is a continuous Gaussian field with covariance kernel $r(y,y')$. Suppose further that $r(y,y)=1$, that $r\in C^{2,2}(U\times U)$, that
  \begin{equation*}
    \max_{|\alpha|\leq2}\max_{y\in\overline{Q_{+1/2}}}\Abs{\partial_y^\alpha\partial_{y'}^\alpha \,r(y,y')|_{y'=y}}\leq \Theta,
  \end{equation*}
  and that the field $h$ is non-degenerate and satisfies
  \begin{equation*}
    \det \Sigma_y\geq\theta>0
  \end{equation*}
  where $\Sigma_y$ is the covariance matrix of the Gaussian random vector $\nabla h(y)$, that is, the matrix with the entries $\Sigma_y(i, j) = \partial_{y_i}\partial_{y'_j}\,r(y, y')|_{y'=y}$. Given $\epsilon_4>0$ there exists $\tau_0=\tau_0(\epsilon_4,\theta,\Theta,Q)$ such that
  \begin{equation*}
    \Pro[\min_{y\in Q} \max \{ |h(y)|, |\nabla h(y)|\}\leq \tau]\leq \epsilon_4
  \end{equation*}
  for all $\tau\leq\tau_0$.
\end{lemma}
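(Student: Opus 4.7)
The plan is to combine a pointwise small-ball estimate for the joint Gaussian vector $(h(y),\nabla h(y))$ with a net argument and a modulus-of-continuity bound on $h$ and $\nabla h$. The first observation is that since $r(y,y)\equiv 1$ is constant in $y$, differentiating gives $\partial_{y_i}r(y,y')|_{y'=y}=0$, i.e.\ $h(y)$ and $\nabla h(y)$ are uncorrelated. Therefore the centred Gaussian vector $V_y=(h(y),\partial_{y_1}h(y),\partial_{y_2}h(y))\in\R^3$ has block-diagonal covariance matrix $\mathrm{diag}(1,\Sigma_y)$ whose determinant is $\det\Sigma_y\geq\theta$. Its density is thus bounded by $C/\sqrt{\theta}$, yielding the pointwise small-ball estimate
\begin{equation*}
\Pro\bigl[\max\{|h(y)|,|\nabla h(y)|\}\leq s\bigr]\leq C\,\theta^{-1/2}s^{3}
\end{equation*}
uniformly in $y\in\overline{Q_{+1/2}}$.

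Next I would produce a random Lipschitz constant for the pair $(h,\nabla h)$. The hypothesis $r\in C^{2,2}$ with the uniform bound $\Theta$ on the second-order derivatives on the diagonal guarantees, via the Kolmogorov continuity theorem (or Dudley's entropy bound) applied to the centred Gaussian field $\nabla^{2}h$, that
\begin{equation*}
L:=\|h\|_{C^{2}(\overline{Q_{+1/4}})}
\end{equation*}
is almost surely finite with all Gaussian moments controlled in terms of $\Theta$ and $|Q|$. In particular, given $\epsilon_4>0$ there is $M=M(\epsilon_4,\Theta,Q)$ with $\Pro[L>M]<\epsilon_4/2$.

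Now fix $\tau>0$ small and a $\tau$-net $\{y_j\}\subseteq Q$ of cardinality $N(\tau)\lesssim_{Q}\tau^{-2}$. If there exists $y\in Q$ with $\max\{|h(y)|,|\nabla h(y)|\}\leq\tau$, then by the mean value theorem applied to $h$ and to each component of $\nabla h$, on the event $\{L\leq M\}$ some net point $y_j$ satisfies $\max\{|h(y_j)|,|\nabla h(y_j)|\}\leq(1+M)\tau$. Combining the pointwise bound with a union bound over the net gives
\begin{equation*}
\Pro\Bigl[\min_{y\in Q}\max\{|h(y)|,|\nabla h(y)|\}\leq\tau,\ L\leq M\Bigr]\leq N(\tau)\cdot C\theta^{-1/2}((1+M)\tau)^{3}\lesssim_{\theta,\Theta,Q}M^{3}\tau.
\end{equation*}
Choosing $\tau_0=\tau_0(\epsilon_4,\theta,\Theta,Q)$ so small that the right-hand side is $\leq\epsilon_4/2$ for all $\tau\leq\tau_0$, and combining with $\Pro[L>M]\leq\epsilon_4/2$, yields the claim.

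The main technical point will be the block-diagonality observation that reduces joint non-degeneracy of $V_y$ to the hypothesised lower bound on $\det\Sigma_y$; once this is in hand, the small-ball estimate and the net/Lipschitz argument are standard. The dependence of $\tau_0$ on $Q$ enters only through $|Q|$ via the size of the net and the moment bound on $L$.
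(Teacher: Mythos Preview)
The paper does not give its own proof of this lemma; it is quoted (with a slight modification) from Nazarov--Sodin \cite{NaSo2}*{Lemma~7}. Your outline follows the standard route to such an estimate and is essentially correct: the observation that $r(y,y)\equiv 1$ forces $\mathrm{Cov}(h(y),\partial_i h(y))=0$, hence the block-diagonality of the covariance of $(h,\nabla h)$ and the pointwise small-ball bound $\Pro[\max\{|h(y)|,|\nabla h(y)|\}\le s]\lesssim \theta^{-1/2}s^3$, is exactly the right starting point, and the net/union-bound scheme is the natural way to globalise it.

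There is one genuine imprecision. You invoke Kolmogorov ``applied to the centred Gaussian field $\nabla^2 h$'' to conclude that $L=\|h\|_{C^2}$ has Gaussian moments controlled by $\Theta$. But Kolmogorov for $\nabla^2 h$ requires increment bounds on $\nabla^2 h$, i.e.\ derivatives of $r$ beyond order $(2,2)$, which are not assumed; under the stated hypotheses $h$ need not even be a.s.\ $C^2$. What the hypotheses \emph{do} give, via
\[
\E\bigl[(\partial_i h(y)-\partial_i h(y'))^2\bigr]
=\E\Bigl[\Bigl(\int_0^1\nabla\partial_i h\bigl(y'+t(y-y')\bigr)\cdot(y-y')\,dt\Bigr)^2\Bigr]
\le C\,\Theta\,|y-y'|^2,
\]
is that $\nabla h$ is a.s.\ H\"older-$\beta$ for every $\beta<1$, with H\"older seminorm (and likewise $\sup|\nabla h|$) having moments controlled by $\Theta$ and $|Q|$. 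Running your net argument with this in place of Lipschitz continuity of $\nabla h$ gives, on $\{L\le M\}$, that the nearest net point satisfies $\max\{|h(y_j)|,|\nabla h(y_j)|\}\lesssim M\tau^\beta$, whence the union bound over $O_Q(\tau^{-2})$ net points yields $O_{\theta,\Theta,Q}(M^3\tau^{3\beta-2})\to 0$ for any $\beta>2/3$. With this small repair the argument is complete and the dependence of $\tau_0$ on $(\epsilon_4,\theta,\Theta,Q)$ is as stated.
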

\begin{rem}
  This is in fact a slight modification of \cite{NaSo2}*{Lemma 7}.
\end{rem}

\begin{lemma}[\cite{NaSo2}*{Lemma 3}]\label{lem: Sodin components}
  Let $Q\subseteq \R^2$ be an open square and $h_1\in\mathcal C^1(Q)$. Suppose that there exist $\tau_1,\tau_2>0$ such that for all $x\in Q$ we have either $|h_1(y)|>\tau_1$ or $|\nabla h_1(y)|>\tau_2$. Let $h_2\in \mathcal C(Q)$ with $\sup_{y\in Q} |h_2(y)|<\tau_1$. Then each component $\Gamma$ of the zero set $\mathcal Z(h_1)$ with $d(\Gamma,\partial Q)>\frac{\tau_1}{\tau_2}$ generates a component $\tilde{\Gamma}$ of the zero set $\mathcal Z(h_1+h_2)$ with $\tilde{\Gamma}\subseteq\Gamma_{+\frac{\tau_1}{\tau_2}}$. Moreover different components of $\mathcal Z(h_1)$ generate different components of $\mathcal Z(h_1+h_2)$.
\end{lemma}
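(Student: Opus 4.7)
My plan is a tubular-neighbourhood plus sign-analysis argument: for each component $\Gamma$ of $\mathcal Z(h_1)$ I build an explicit tube $T_\Gamma$ via the gradient flow of $h_1$, use the sign of $h_1$ on the two ``caps'' of $\partial T_\Gamma$ to force a zero of $h_1+h_2$ inside, and exploit disjointness of the $T_\Gamma$'s to distinguish the perturbed components.

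Set $U = \{y\in Q : |h_1(y)|<\tau_1\}$. By the hypothesis, $|\nabla h_1| > \tau_2$ on $U$, so $h_1|_U$ is a $\mathcal C^1$ submersion without critical points and the unit vector field $N = \nabla h_1/|\nabla h_1|$ is continuous on $U$. Let $\Gamma$ be a component of $\mathcal Z(h_1)$ with $d(\Gamma,\partial Q)>\tau_1/\tau_2$; note that $\Gamma$ is then compact in $Q$ and hence, being a connected compact $1$-manifold, a simple closed curve. Define $T_\Gamma$ to be the union over $p\in\Gamma$ of the $N$-trajectory segments through $p$ of Euclidean length at most $\tau_1/\tau_2$ on each side. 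Along such a segment $h_1$ changes at rate $|\nabla h_1|>\tau_2$, so after arc length at most $\tau_1/\tau_2$ the value of $|h_1|$ has reached $\tau_1$ and the segment terminates on $\partial U$. Consequently $T_\Gamma\subseteq\Gamma_{+\tau_1/\tau_2}\subseteq Q$, and the boundary of $T_\Gamma$ decomposes into two ``caps'' $\partial^\pm T_\Gamma \subseteq\{h_1=\pm\tau_1\}$. Because $h_1$ is strictly monotone along each $N$-trajectory, every trajectory meets $\mathcal Z(h_1)$ in at most one point, so $T_{\Gamma_1}\cap T_{\Gamma_2}=\emptyset$ whenever $\Gamma_1\neq\Gamma_2$.

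Since $\sup_Q|h_2|<\tau_1$, we have $\sgn(h_1+h_2)=\pm 1$ on $\partial^\pm T_\Gamma$, so along any trajectory through a point of $\Gamma$ the continuous function $h_1+h_2$ changes sign and vanishes at least once by the intermediate value theorem. The resulting closed set of zeros in $T_\Gamma$ separates $\partial^+ T_\Gamma$ from $\partial^- T_\Gamma$. Since the gradient flow gives $T_\Gamma$ the product structure $T_\Gamma\cong \Gamma\times(-\tau_1,\tau_1)$ of an open annulus, planar Jordan-separation produces a single connected component $\tilde\Gamma$ of $\mathcal Z(h_1+h_2)\cap T_\Gamma$ realising this separation. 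This $\tilde\Gamma$ is a genuine component of the global zero set $\mathcal Z(h_1+h_2)$, because outside $U$ we have $|h_1|\geq\tau_1>|h_2|$ and hence $h_1+h_2\neq 0$; in particular $\tilde\Gamma$ cannot escape $T_\Gamma$ and lies in $\Gamma_{+\tau_1/\tau_2}$. The disjointness of the $T_\Gamma$'s then immediately forces $\tilde\Gamma_1\neq\tilde\Gamma_2$ for distinct $\Gamma_1,\Gamma_2$, which is the ``moreover'' statement.

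The main obstacle I anticipate is a soft analytic point underlying the ODE formulation: because $h_1$ is merely $\mathcal C^1$, the continuous unit vector field $N$ need not be Lipschitz, so $N$-trajectories exist only by Peano and uniqueness can in principle fail, which complicates the product structure $T_\Gamma\cong\Gamma\times(-\tau_1,\tau_1)$ used in the topological step. The cleanest workaround is to avoid the ODE altogether: replace $T_\Gamma$ by the connected component of $\Gamma$ in $U\cap\Gamma_{+\tau_1/\tau_2}$, use the submersion property $|\nabla h_1|>\tau_2$ and the implicit function theorem to foliate this set by the smooth level curves of $h_1$ (which are well-defined without any Lipschitz hypothesis on $\nabla h_1$), and execute the separation argument relative to this level-set foliation. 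Once this analytic subtlety is dealt with, both the existence of $\tilde\Gamma$ and the disjointness of the different tubes become routine planar-topology bookkeeping.
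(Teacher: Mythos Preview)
The paper does not actually prove this lemma: it is quoted verbatim from Nazarov--Sodin \cite{NaSo2}*{Lemma 3} and used as a black box in the proof of Proposition~\ref{prop: Psi negligible}. So there is no ``paper's own proof'' to compare against here.

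That said, your sketch is the right shape, and it is essentially the argument Nazarov--Sodin give: a tube around $\Gamma$ inside $\{|h_1|<\tau_1\}$, sign control of $h_1+h_2$ on the two boundary pieces $\{h_1=\pm\tau_1\}$, and a separation/IVT step to produce $\tilde\Gamma$. Two small comments. First, you overstate slightly when you write that Jordan separation produces ``a single connected component'' $\tilde\Gamma$; all the lemma asks for (and all the separation argument gives) is \emph{at least one} component of $\mathcal Z(h_1+h_2)\cap T_\Gamma$ that separates the two caps, which you then take as $\tilde\Gamma$. Second, the $\mathcal C^1$ issue you flag is genuine and your fix is the correct one: rather than integrating the (possibly non-Lipschitz) unit gradient field, take $T_\Gamma$ to be the connected component of $\{|h_1|<\tau_1\}$ containing $\Gamma$, and use the $\mathcal C^1$ implicit function theorem to get the local product structure via the level sets $\{h_1=s\}$. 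The containment $T_\Gamma\subseteq\Gamma_{+\tau_1/\tau_2}$ then follows because from any $y\in T_\Gamma$ one can descend along \emph{some} Peano solution of $\dot\gamma=-\operatorname{sgn}(h_1)\nabla h_1/|\nabla h_1|$ to reach $\mathcal Z(h_1)$ in arclength $<\tau_1/\tau_2$ without leaving $T_\Gamma$; uniqueness of the trajectory is not needed for this distance estimate. Disjointness of the tubes for distinct $\Gamma$'s is then immediate from the fact that on each $T_\Gamma$ the only level set $\{h_1=0\}$ present is $\Gamma$ itself (otherwise $T_\Gamma$ would be disconnected by an intermediate level set, contradicting connectedness).
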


\begin{proof}[Proof of Proposition~\ref{prop: Psi negligible}]
  We first note that since we are interested in the nodal count, we may replace $\Phi$ and $\Psi$ by $(\sum_{k\in\K}\mu_{E,a}(I_k))^{-1/2}\Phi$ and $(\sum_{k\in\K}\mu_{E,a}(I_k))^{-1/2}\Psi$ respectively. We define $Q=(-1/2,1/2)^2$, note that $Q_{+1/2}=(-1,1)^2$ and think of (this modified) $\Phi$ as a Gaussian field on any $U$, an open neighbourhood of $\overline{Q_{+1/2}}$. In order to apply Lemma~\ref{lem: Sodin f and grad not simult small} we first note that this $\Phi$ has co-variance function
  \begin{equation*}
    r(y,y')=\sum_{k\in\K}\mu_k e(\langle R\zeta^{(k)},y-y'\rangle),
  \end{equation*}
  where $\mu_k=\frac{\mu_{E,a}(I_k)}{\sum_{k\in\K}\mu_{E,a}(I_k)}$. Note that we manifestly have $r(y,y)=\sum_{k\in\K}\mu_k=1$ and $r\in C^{2,2}(U\times U)$, while a routine computation shows that
  \begin{equation*}
    \partial_y^\alpha\partial_{y'}^\alpha \,r(y,y')=(4\pi^2R^2)^{|\alpha|}\sum_{k\in\K}\mu_k (\zeta_1^{(k)})^{2\alpha_1}(\zeta_2^{(k)})^{2\alpha_1} e(\langle R\zeta^{(k)},y-y'\rangle).
  \end{equation*}
  This implies that
  \begin{equation*}
    \max_{|\alpha|\leq2}\max_{y\in\overline{Q_{+1/2}}}\Abs{\partial_y^\alpha\partial_{y'}^\alpha \,r(y,y')|_{y'=y}}\leq 16 \pi^4 R^4.
  \end{equation*}

  We further compute that
  \begin{equation*}
    \partial_{y_i}\partial_{y'_j}\,r(y, y')=4\pi^2R^2 \sum_{k\in\K} \mu_k \zeta_i^{(k)} \zeta_j^{(k)} e(\langle R\zeta^{(k)},y-y'\rangle)
  \end{equation*}
  which implies that
  \begin{align*}
    \det \Sigma_y&=16\pi^4R^4\sum_{k,k'\in\K} \mu_k \mu_{k'} (\zeta_1^{(k)})^2 (\zeta_2^{(k')})^2-\zeta_1^{(k)}\zeta_1^{(k')}\zeta_2^{(k)}\zeta_2^{(k')}\\
    &=8\pi^4R^4\sum_{k,k'\in\K} \mu_k \mu_{k'}\sin^2(\phi_k-\phi_{k'})
  \end{align*}
  where $\zeta^{(k)}=e^{i\phi_k}$. This implies that $\det \Sigma_y=0$ if and only if $\K^+$ is a singleton.

  Define the event
  \begin{equation*}
    \F_{\tau}=\{\min_{y\in [-1,1]^2} \max \{ |\Phi(y)|, |\nabla \Phi(y)|\} > \tau\}
  \end{equation*}
  and consider separately two cases. First we assume that there are two distinct points $\zeta$ and $\zeta'$ in the support of the limiting measure $\mu$ with $\zeta\neq-\zeta'$. Then there exist $\rho(\mu)$ and $c(\mu)$ such that, if $\zeta=e^{i\phi}$ and $\zeta'=e^{i\phi'}$ and $I$ (respectively $I'$) denotes the arc centred at $\zeta$ (respectively $\zeta'$) of length $\rho/2$, we have $|\phi-\phi'|,|\pi-\phi+\phi'|>\rho$ and $\mu(I),\mu(I')>c$. In this case we have
  \begin{equation*}
    \det \Sigma_y=8\pi^4R^4\sum_{k,k'\in\K} \mu_k \mu_{k'}\sin^2(\phi_k-\phi_{k'})\gtrsim \sum_{\substack {k\in\K \\ I_k\cap I\neq \varnothing}} \sum_{\substack {k'\in\K \\ I_k\cap I'\neq \varnothing}} \mu_k \mu_{k'}\sin^2(\phi_k-\phi_{k'})
  \end{equation*}
  and note that for such $k,k'$ we have $|\phi_k-\phi_{k'}|,|\pi-\phi_k+\phi_{k'}|\geq \frac\rho 2 -\frac{2\pi}K\geq\frac\rho 4$ for $K\geq\frac{8\pi}\rho$. This implies that
  \begin{equation*}
    \det \Sigma_y \gtrsim \rho^2\sum_{\substack {k\in\K \\ I_k\cap I\neq \varnothing}} \mu_k \sum_{\substack {k'\in\K \\ I_k\cap I'\neq \varnothing}}  \mu_{k'}\geq \rho^2\sum_{\substack {k\in\K \\ I_k\cap I\neq \varnothing}} \mu_{E,a}(I_k) \sum_{\substack {k'\in\K \\ I_k\cap I'\neq \varnothing}}  \mu_{E,a}(I_k'),
  \end{equation*}
  and since
  \begin{equation*}
     \sum_{\substack {k\in\K \\ I_k\cap I\neq \varnothing}} \mu_{E,a}(I_k)\geq \mu_{E,a}(I)-\delta(\frac{\pi\rho}K +2)\geq \frac c2
  \end{equation*}
  for sufficiently small $\delta$ and sufficiently large $E$, this means that
  \begin{equation*}
    \det \Sigma_y \gtrsim \rho^2c^2.
  \end{equation*}
  Lemma~\ref{lem: Sodin f and grad not simult small} therefore implies that, by choosing $\tau$ sufficiently small (depending on $\epsilon_4,R$ and $\mu$) we have $\Pro[\Omega\setminus \F_{\tau}]\leq \epsilon_4$.

  On the other hand, if the support of $\mu$ consists of two antipodal points, then for a large enough $E$ (depending on $K$ and $\delta$) we have $\K^+=\{k_0\}$. (Strictly speaking, if it happens that the support of $\mu$ lies on endpoints of intervals $I_k$, it might be the case that $\K^+$ contains two elements. In this case, we replace $K$ by $K+1$.) We can then represent $\Phi$ as
  \begin{equation*}
    \Phi(y)=2\mu_{k_0}^{1/2}\re (c_{k_0} e(\langle R\zeta^{(k_0)},y\rangle))=2\mu_{k_0}^{1/2}X \cos(2\pi \langle R\zeta^{(k_0)},y\rangle)+ \alpha),
  \end{equation*}
  where $X$ and $\alpha$ are independent random variables, $X^2$ has an exponential distribution, and $\alpha$ is uniformly distributed on $[0,2\pi]$. A routine computation shows that
  \begin{equation*}
    \nabla\Phi(y)=2\mu_{k_0}^{1/2}X 2\pi R \sin(2\pi \langle R\zeta^{(k_0)},y\rangle)+ \alpha)\zeta^{(k_0)},
  \end{equation*}
  where we are treating $\zeta^{(k_0)}$ as a vector in $\R^2$. This yields
  \begin{align*}
    \min_{y\in [-1,1]^2} \max \{ |\Phi(y)|, |\nabla \Phi(y)|\} &= 2\mu_{k_0}^{1/2}X\min_{t\in [0,2\pi]} \max \{ |\cos t|, 2\pi R|\sin t|\}\\
    &=2\mu_{k_0}^{1/2}X\left(1-\frac1{8\pi^2 R^2}+O\left(\frac1{R^4}\right)\right),
  \end{align*}
  from which we easily deduce that $\Pro[\Omega\setminus \F_{\tau}]\leq \epsilon_4$ for a sufficiently small $\tau$ in this case too. We now fix such a $\tau$.

  By Courant's Theorem we have $\cN_{\Phi+\Psi} = O(R^2)$ and $\cN_{\Phi} = O(R^2)$ which implies that $\int_{\Omega\setminus \F_\tau} \cN_{\Phi+\Psi}d\Pro = O(\epsilon_4 R^2)$ and $\int_{\Omega\setminus \F_\tau}\cN_{\Phi}d\Pro = O(\epsilon_4 R^2)$.

  We write $Q_R^-=[-\frac12+\frac1R,\frac12-\frac1R]^2$ and $Q_R^+=[-\frac12-\frac1R,\frac12+\frac1R]^2$. We have $\|\Psi\|_{\mathcal C^1}\leq 2\epsilon_1$ and we choose $\epsilon_1=\frac{\tau}{4R}$. On the event $\F_\tau$, applying Lemma~\ref{lem: Sodin components} with $h_1=\Phi$, $h_2=\Psi$, $\tau_1=2\epsilon_1$ and $\tau_2=\tau$, we see that (almost surely)
  \begin{equation*}
    \begin{split}
      \cN_{\Phi+\Psi}&= \text{Number of components of }\mathcal Z(\Phi+\Psi)\text{ contained in }Q\\
      &\geq \text{Number of components } \Gamma \text{ of } \mathcal Z(\Phi) \text{ contained in } Q \text{ with } d(\Gamma,\partial Q) > \frac{1}{2R}\\
      &\geq \text{Number of components } \Gamma \text{ of } \mathcal Z(\Phi) \text{ contained in } Q \text{ with } d(\Gamma,\partial Q) > \frac{1}{R}.
    \end{split}
  \end{equation*}

  Furthermore, since $\|\Psi\|_{\mathcal C^1}\leq 2\epsilon_1$ we see that, on the event $\F_\tau$, we have also
  \begin{equation*}
    \min_{y\in[-1,1]^2} \max\{ |(\Phi+\Psi)(y)| , |\nabla(\Phi+\Psi)(y)| \} > \frac{\tau}{2}.
  \end{equation*}
  Applying once more Lemma~\ref{lem: Sodin components} with $h_1=\Phi+\Psi$, $h_2=-\Psi$, $\tau_1=2\epsilon_1$ and $\tau_2=\frac\tau2$, we see that, (alomost surely) on the event $\F_\tau$,
  \begin{equation*}
    \begin{split}
      \cN_{\Phi+\Psi} &= \text{Number of components } \Gamma \text{ of } \mathcal Z(\Phi+\Psi) \text{ contained in } Q_R^+ \text{ with } d(\Gamma,\partial Q_R) > \frac{1}{R}\\
      &\leq \text{Number of components of } \mathcal Z(\Phi) \text{ contained in } Q_R^+.
    \end{split}
  \end{equation*}

  Combining these estimates we see that, on the event $\F_\tau$, we have
  \begin{equation*}
    \Abs{ \cN_{\Phi+\Psi} - \cN_\Phi } \leq \text{Number of components of } \mathcal Z(\Phi) \text{ contained in } Q_R^+ \text{ that intersect } Q_R^+\setminus Q_R^-
  \end{equation*}
  which allows us to conclude that
  \begin{align*}
    \big| \E[ \cN_{\Phi+\Psi} &- \cN_\Phi ] \big|\\
    &\leq \int_{\F_\tau} \Abs{ \cN_{\Phi+\Psi} - \cN_\Phi } d\Pro + \int_{\Omega\setminus \F_\tau} \cN_{\Phi+\Psi} d\Pro + \int_{\Omega\setminus \F_\tau} \cN_{\Phi} d\Pro\\
    &\leq \E\left[\# \text{components of } \mathcal Z(\Phi)\text{ contained in } Q_R^+ \text{ that intersect } Q_R^+\setminus Q_R^- \right] + O(\epsilon_4 R^2).
  \end{align*}
  We may compute the expected number of intersections of $\mathcal Z(\Phi)$ with $\partial Q_R^-$ using the Kac-Rice formula; this is at most $O(R)$. Each nodal domain of $\Phi$ has area at least $\frac{c}{R^2}$, and the area of $Q_R^+\setminus Q_R^-$ is at most $\frac{C}{R}$. Thus the number of components of $\mathcal Z(\Phi)$ contained in $Q_R^+\setminus Q_R^-$ is at most $O(R)$. Since every component of $\mathcal Z(\Phi)$ contained in $Q_R^+$ that intersects $Q_R^+\setminus Q_R^-$ is either contained in $Q_R^+\setminus Q_R^-$ or intersects $\partial Q_R^-$, we conclude that
  \begin{equation*}
    \big| \E[ \cN_{\Phi+\Psi} - \cN_\Phi ] \big| = O(R + \epsilon_4 R^2)
  \end{equation*}
  as claimed.
\end{proof}

\subsection{Proof of Proposition \ref{prop: NS constant}}

First, recalling the definition of $\mu_k$ from the previous subsection, define a Gaussian field
\begin{align*}
  h_\omega\colon&\R^2 \to \R\\
  &y \mapsto \sum_{k\in\K} \mu_k^{\frac12} c_k(\omega) e\langle \zeta^{(k)},y\rangle
\end{align*}
with corresponding spectral measure $\mu_K=\sum_{k\in\K} \mu_k \delta_{\zeta^{(k)}}$. Since, for $y\in[-1,1]^2$ we have
\begin{equation*}
  \Phi(y)=h(Ry)\sum_{k\in\K}\mu_{E,a}(I_k),
\end{equation*}
it is clear that $\cN_\Phi=\cN_{h}(0,R)$ (recall that $\cN_{f}(x,R)$ denotes the number of nodal domains of the function $f$ contained in the open square centred at $x$ of sidelength $R$). Moreover, \cite{NaSo2}*{Theorem 1} or rather the more precise version \cite{KW}*{Proposition 3.5}, implies that for large $R$
\begin{equation*}
  \E[\cN_\Phi]= c_{NS}(\mu_K)R^2 +O(R)
\end{equation*}
where the constant in the term $O(R)$ is absolute (that is, independent of all of the other parameters)

It remains to see that $c_{NS}(\mu_K)$ is close to $c_{NS}(\mu)$. First note that, by Kurlberg and Wigman \cite{KW}*{Theorem 3.1}, the map $\mu\mapsto c_{NS}(\mu)$ is continuous with respect to the weak$^*$ topology, so it is enough to see that $c_{NS}(\mu_K)$ is close to $c_{NS}(\mu_{E,a})$. This topology may be metrized by the Prokhorov metric, and we claim that the set of all probability measures on $\Sone$, which we denote $\mathcal P(\Sone)$, is compact. Indeed, applying Prokhorov's theorem \cite{Bil2}*{Theorem 5.1}, it is enough to see that $\mathcal P(\Sone)$ is tight. But since $\Sone$ is compact this is trivial, so $\mathcal P(\Sone)$ is indeed compact.

Now since a continuous function on a compact space is uniformly continuous, it suffices to show that, given $\eta>0$ there exists $K_2(\eta)$ such that
\begin{equation*}
  d(\mu_K,\mu_{E,a})\leq \eta
\end{equation*}
for all $K\geq K_2$, where $d$ denotes the Prokhorov metric. Recall that this metric is defined by
\begin{align*}
  d(\mu_K,\mu_{E,a}) = \inf\{\eta>0 \colon \mu_K(V)& \leq \mu_{E,a}(V_{+\eta}) + \eta \text{ and }\\
  & \mu_{E,a}(V) \leq \mu_K(V_{+\eta}) + \eta \text{ for all Borel } V\subseteq\Sone \},
\end{align*}
where $V_{+\eta}$ denotes the $\eta$-neighbourhood of the set $V$.

Now the intervals $I_k$ have been chosen so that each has length at most $\frac CK$. Thus, if $V\cap I_k\neq\varnothing$ then $I_k\subseteq V_{+C/K}$. This implies that
\begin{equation*}
  \mu_K(V) \leq \sum_{k:V\cap I_k\neq\varnothing} \mu_K(I_k) = \frac{\sum_{k:V\cap I_k\neq\varnothing} \mu_{E,a}(I_k)}{\sum_{k\in\K}\mu_{E,a}(I_k)} \leq \frac{\mu_{E,a}(V_{+C/K})}{\sum_{k\in\K}\mu_{E,a}(I_k)}
\end{equation*}
and, since
\begin{equation*}
  \sum_{k\in\K}\mu_{E,a}(I_k)=1-\sum_{k\notin\K}\mu_{E,a}(I_k)\geq 1-\delta K,
\end{equation*}
we see that for $\delta\leq K^{-2}$ we have
\begin{equation*}
  \mu_K(V) \leq \mu_{E,a}(V_{+C/K})+\frac{C}{K}.
\end{equation*}
Similarly we have
\begin{equation*}
  \mu_{E,a}(V)\leq \left(\sum_{k\in\K}\mu_{E,a}(I_k)\right) \mu_K(V_{+C/K})\leq \mu_K(V_{+C/K}).
\end{equation*}
This implies that $d(\mu_K,\mu_{E,a})\leq \frac CK$, which completes the proof of Proposition~\ref{prop: NS constant}.

\begin{bibdiv}
\begin{biblist}

\bib{BeHe}{article}
{
author={B\'{e}rard, Pierre},
author={Helffer, Bernard},
title={Dirichlet eigenfunctions of the square membrane: Courant's property, and A. Stern's and A. Pleijel's analyses.},
note={arXiv:1402.6054 [math.AP]},
}

\bib{B-B}{article}{
author={Bombieri, Enrico},
author={Bourgain, Jean},
title={A problem on sums of two squares},
journal={IMRN},
pages={3343--3407},
date={2015},
volume={11},
}

\bib{B}{article}{
   author={Bourgain, Jean},
   title={On toral eigenfunctions and the random wave model},
   journal={Israel J. Math.},
   volume={201},
   date={2014},
   number={2},
   pages={611--630},
   issn={0021-2172},
   review={\MR{3265298}},
   doi={10.1007/s11856-014-1037-z},
}

\bib{Bil}{book}{
   author={Billingsley, Patrick},
   title={Probability and measure},
   series={Wiley Series in Probability and Mathematical Statistics},
   edition={3},
   note={A Wiley-Interscience Publication},
   publisher={John Wiley \& Sons, Inc., New York},
   date={1995},
   pages={xiv+593},
   isbn={0-471-00710-2},
   review={\MR{1324786 (95k:60001)}},
}

\bib{Bil2}{book}{
   author={Billingsley, Patrick},
   title={Convergence of probability measures},
   series={Wiley Series in Probability and Statistics: Probability and
   Statistics},
   edition={2},
   note={A Wiley-Interscience Publication},
   publisher={John Wiley \& Sons, Inc., New York},
   date={1999},
   pages={x+277},
   isbn={0-471-19745-9},
   review={\MR{1700749 (2000e:60008)}},
   doi={10.1002/9780470316962},
}
	
\bib{DF}{article}{
   author={Donnelly, Harold},
   author={Fefferman, Charles},
   title={Nodal sets of eigenfunctions on Riemannian manifolds},
   journal={Invent. Math.},
   volume={93},
   date={1988},
   number={1},
   pages={161--183},
   issn={0020-9910},
   review={\MR{943927 (89m:58207)}},
   doi={10.1007/BF01393691},
}

\bib{KW}{article}{
   author={Kurlberg, P\"{a}r},
   author={Wigman, Igor},
   title={Non-universality of the Nazarov-Sodin constant},
   note={arXiv:1406.7449 [math-ph]},
}

\bib{Lew}{article}{
   author={Lewy, Hans},
   title={On the minimum number of domains in which the nodal lines of
   spherical harmonics divide the sphere},
   journal={Comm. Partial Differential Equations},
   volume={2},
   date={1977},
   number={12},
   pages={1233--1244},
   issn={0360-5302},
   review={\MR{0477199 (57 \#16740)}},
}

\bib{NaSo}{article}
{
author={Nazarov, Fedor},
author={Sodin, Mikhail},
title={On the Number of Nodal Domains of Random Spherical Harmonics},
journal={Amer. J. Math.},
pages={1337--1357},
volume={131},
number={5},
date={2009},
}

\bib{NaSo2}{article}
{
author={Nazarov, Fedor},
author={Sodin, Mikhail},
title={Asymptotic laws for the spatial distribution and the number of connected components of zero sets of Gaussian random functions},
note={arXiv:1507.02017 [math.PR]},
}


\bib{Roy}{book}{
   author={Royden, H. L.},
   title={Real analysis},
   edition={3},
   publisher={Macmillan Publishing Company, New York},
   date={1988},
   pages={xx+444},
   isbn={0-02-404151-3},
   review={\MR{1013117 (90g:00004)}},
}

\bib{St}{thesis}
{
author={Stern, Antonie},
title={Bemerkungen \"{u}ber asymptotisches Verhalten von Eigenwerten und Eigenfunctionen},
language={German},
type={Inaugural-Dissertation zur Erlangung der Doktorw\"{u}rde der Hohen Mathematisch-Naturwissenschaftlichen Fakult\"{a}t der Georg August-Universit¨at zu G¨ottingen},
date={1924},
}
\end{biblist}
\end{bibdiv}

\end{document}